\newcommand\Fontvix{\fontsize{8}{10}\selectfont}
 \newtheorem{thm}{Theorem}[section]
 \newtheorem{cor}[thm]{Corollary}
 \newtheorem{lem}[thm]{Lemma}
 \newtheorem{rmk}[thm]{Remark}
 \newtheorem{prop}[thm]{Proposition}
 \newtheorem{obs}[thm]{Observation}
 \theoremstyle{definition}
 \newtheorem{defn}[thm]{Definition}
 \newtheorem{exmp}[thm]{Example}
 \newtheorem{rem}[thm]{Remark}
\numberwithin{equation}{section}
 \newcommand{\To}{\longrightarrow}
 \newcommand{\Map}[3]{#1\, :\, #2\To #3}
 \newcommand{\Real}{\mathbb{R}}
 \newcommand{\set}[1]{\left\{#1\right\}}
 \newcommand{\Set}[2]{\set{#1\ \vert\ #2}}
\begin{document}

\title{On central-max-point tolerance graphs and some subclasses of interval catch digraphs}
\author{Sanchita Paul\thanks{Department of Mathematics, Jadavpur University, Kolkata - 700 032, India. sanchitajumath@gmail.com} \ \thanks{(Corresponding author)} and
Shamik Ghosh\thanks{Department of Mathematics, Jadavpur University, Kolkata - 700 032, India. ghoshshamik@yahoo.com}}

\date{}

%\begin{titlepage}

\maketitle

\null
\vspace{-4.5em}

\begin{abstract}
\noindent
{\footnotesize Max-point-tolerance graphs (MPTG) were studied by Catanzaro et al.~in 2017 and the same class of graphs were introduced in the name of $p$-BOX($1$) graphs by Soto and Caro in 2015. In our paper we consider {\em central-max-point tolerance graphs} (central MPTG) by taking the points of MPTG as center points of their corresponding intervals. In course of study on this class of graphs we show that the class of central MPTG is same as the class of unit max-tolerance graphs. We prove the class of {\em unit central max-point tolerance graphs} is same as that of {\em proper central max-point tolerance graphs} and both of them are equivalent to the class of proper interval graphs. Next we introduce {\em $50\%$ max-tolerance graphs} and separate this class from unit max-tolerance graph whereas for min-tolerance graphs $50\%$ and unit denote the same graph class. 

%We show that proper interval graphs and cycles are $50\%$ max-tolerance graphs, whereas the classes of interval graphs and central MPTG are both incomparable with that of $50\%$ max-tolerance graphs.

\vspace{0.3em}

\noindent Interval catch digraphs (ICD) was introduced by Maehera in 1984. In his introducing paper Maehera proposed a conjecture for the characterization of {\em central interval catch digraph} (central ICD) in terms of forbidden subdigraphs. In this paper, we disprove the conjecture by showing counter examples. We find close relation between a central MPTG and a central ICD.  Also we characterize this digraph by defining a suitable mapping from the vertex set to the real line.
Next we study {\em oriented interval catch digraphs} (oriented ICD). We characterize augmented adjacency matrix of an oriented ICD when it is a tournament. Also we characterize an oriented ICD whose underlying graph is a tree. Further we obtain characterization of the adjacency matrix of a {\em proper interval catch digraph}. Another important result is to provide forbidden subdigraph characterization of those {\em proper oriented interval catch digraphs} whose underlying undirected graph is chordal. In last section we discuss relationships between these classes of graphs and digraphs.} 
\end{abstract}

\noindent
{\scriptsize Keywords:} {\footnotesize Interval graph, proper interval graph, tolerance graph, max-tolerance graph, max-point tolerance graph, interval catch digraph, oriented digraph, tournament.}

\noindent
{\scriptsize 2010 Mathematical Subject Classification:} {\footnotesize Primary: 05C62, 05C75}

\section{Introduction}
{\em Intersection graphs} have many important applications in problems related to real-world situations. Because of its diverse applications in network science, different kinds of intersection graphs were introduced in modeling various geometric objects. Among them interval graph is the most important one. A simple graph $G = (V, E)$ is an \textit{interval graph} if one can map each vertex into an interval on the real line so that any two vertices are adjacent if and only if their corresponding intervals intersect. The class of interval graphs was initially posed by Haj\"{o}s in 1957 \cite{Hajos}.
In 1959, the molecular biological scientist Benzer \cite{Benzer} used the model of interval graphs to obtain a physical map from information on pairwise overlaps of the fragments of DNA.
Since then interval graphs were well studied by many people in Computer Science and Discrete Mathematics for its wide application. Many combinatorial problems have been solved for interval graphs in linear time.

\vspace{0.3em}

\noindent Due to their lot of applications in theories and practical situations the graph class was generalized to several variations. In one direction it went in developing concepts of probe interval graphs \cite{Sg}, circular-arc graphs \cite{BDGS}, interval digraphs \cite{Sen}. On the other hand in 1982, Golumbic and Monma introduced the concept of min-tolerance graphs (commonly known as \textit{tolerance graphs}) \cite{Golumbic1}. We denote the length of an interval $I$ on the real line by $|I|$. A simple undirected graph $G=(V,E)$ is a {\em min-tolerance graph} if each vertex $u\in V$ corresponds to a real interval $I_u$ and a positive real number $t_u$, called {\em tolerance}, such that $uv$ is an edge of $G$ if and only if $|I_u\cap I_v|\geq \min \set{t_u,t_v}$. Golumbic in \cite{Golumbic2} introduced \textit{max-tolerance graphs} where each vertex $u\in V$ corresponds to a real interval $I_u$ and a  positive real number $t_u$ (known as tolerance) such that $uv$ is an edge of $G$ if and only if $|I_u\cap I_v|\geqslant \max\set{t_u,t_v}$. For max-tolerance graphs we may assume $t_{u}\leq |I_{u}|$ for each $u\in V$ otherwise $u$ becomes isolated. An max-tolerance graph is a {\em unit-max-tolerance graph} if $|I_u|=|I_v|$ for all $u,v\in V$. Some combinatorial problems like finding maximal cliques were obtained in polynomial time whereas the recognition problem was proved to be NP-hard \cite{Kaufmann} for max-tolerance graphs in 2006. Also a geometrical connection of max-tolerance graphs to semi-squares was obtained in \cite{Kaufmann}. For further details of tolerance graphs one can see \cite{Golumbic2}.

\vspace{0.3em}
\noindent In 2015 Soto and Caro \cite{Soto} introduced a new graph class, namely {\em $p$-BOX} graphs where each vertex corresponds to a box and a point within it in the $d$-dimensional Euclidean space.  Any two vertices are adjacent if and only if the intersection of their corresponding boxes contains both the corresponding points. When the dimension is one the graph class is denoted by $p$-BOX($1$). In 2017 this dimension one graphs are studied independently by Catanzaro et al \cite{Catanzaro}, but with a different name, {\em max-point tolerance graphs} (MPTG) where each vertex $u\in V$ corresponds to a pair of an interval and a point $(I_u,p_u)$, where $I_u$ is an interval on the real line and $p_u\in I_u$, such that $uv$ is an edge of $G$ if and only if $\set{p_u,p_v}\subseteq I_u\cap I_v$. The graphs MPTG have many practical applications in human genome studies and modelling of telecommunication networks \cite{Catanzaro}. A graph $G=(V,E)$ is called {\em central-max-point-tolerance graph} (central MPTG) if $p_u$ is the center point of $I_u$ for each $u\in V$. This graph class actually matches with the graph defined as \textit{$c$-$p$-BOX(1) graph} in \cite{Soto}. It is known that $c$-$p$-BOX($1$) graphs are max-tolerance graphs. We use the terms MPTG and central MPTG for $p$-BOX($1$) and $c$-$p$-BOX($1$) graphs throughout the paper.

\vspace{0.3em}
\noindent In 1984 \cite{Maehera} Maehera defined an analogous concept of intersection graphs for digraphs. He introduced {\em catch digraph} of $F$ as a digraph $G=(V,E)$ where $uv\in E$ if and only if $u\neq v$ and $p_{v}\in S_{u}$ where $F=\{(S_{u},p_{u})|u\in V\}$ is a family of pointed sets in a Euclidean space. The digraph $G$ is said to be {\em represented} by $F$. Later on {\em interval catch digraphs} (in brief, ICD) was introduced by Erich Prisner in 1989 \cite{Prisner} where $S_{u}$ is represented by interval $I_{u}$. This graph class has many applications in real world situations like networking and telecommunications. He characterized interval catch digraphs in terms of the absence of diasteroidal triple in the digraph. A digraph $G=(V,E)$ is {\em unilaterally connected} if for each pair of distinct vertices $u,v\in V$, there is a directed path from $u$ to $v$ or from $v$ to $u$ (or both). The {\em underlying graph} of a digraph $G=(V,E)$ is an undirected graph $U(G)=(V^\prime ,E^\prime )$ where $V^\prime =V$ and $E^\prime =\Set{uv}{uv\in E\text{ or }vu\in E}$. An undirected graph $G=(V,E)$ is a {\em caterpillar} if it is a tree with path $(s_{1},s_{2},\hdots,s_{k})$, called the spine of $G$ such that every vertex of $G$ has at most distance one from the spine.

\vspace{0.5em}
\noindent In our paper we prove that central MPTG graphs are same as unit max-tolerance graphs. Incidentally this settles a question raised in the book of Golumbic \cite{Golumbic2} that whether interval graphs are unit max tolerance graphs or not. Moreover we show that a {\em unit central MPTG} is same as a {\em proper central MPTG} and also is same as a proper interval graph. Looking at the definition and the fact that central max point tolerance graphs are also max-tolerance graphs, one may think that central max point tolerance graphs are same as max-tolerance graphs where tolerance values are half of their corresponding interval length. In this paper, we introduce {\em $50\%$ max-tolerance} graphs analogous to the  similar concept for min-tolerance graphs. In case of min-tolerance graphs, unit and $50\%$ are defining the same class of graphs \cite{Bogart}. In our paper we show that for max-tolerance graphs they are not same. In fact, the classes central MPTG and $50\%$ max-tolerance graphs are not comparable, although they contain various classes of graphs (for example $C_{n},n\geq 3$, proper intervals graphs) in common. Finally we find a close relation of central MPTG with  {\em central interval catch digraph} (central ICD). Central ICD is an interval catch digraph where the points $p_{u}$ are the center points of the intervals $I_{u}$. This digraph was introduced by Maehera \cite{Maehera} in name of {\em interval digraph}. We characterize this digraph by defining a suitable mapping from the vertex set to the real line. Moreover we disprove a conjecture proposed by Maehera in \cite{Maehera} by discussing some structural property of the {\em augmented adjacency matrix} of a central ICD and creating counter examples.We define {\em oriented interval catch digraph} (oriented ICD) as an interval catch digraph $G=(V,E)$ where each edge of its underlying graph has exactly one direction in $G$. 
We prove that an oriented ICD is acyclic and study various properties of it. We also show augmented adjacency matrix of an oriented ICD takes a special form when it is a tournament. Next we characterize an oriented ICD whose underlying graph is a tree. A {\em proper interval catch digraph} (proper ICD) is an interval catch digraph where no interval contains other properly. We obtain characterization of the augmented adjacency matrix of a proper ICD. We also find an important result which characterize those {\em proper oriented interval catch digraphs} (proper oriented ICD) whose underlying undirected graph is chordal, in terms of forbidden subdigraphs. In Conclusion Section we show the relations between the subclasses of max-tolerance graph and central MPTG and the digraphs discussed in this paper and list major open problems in this area. Henceforth undirected graphs will be called simply graphs.

\section{Preliminaries}

\noindent The following characterizations is known for interval catch digraphs.

\begin{thm} \cite{Prisner} \label{e1}
Let $G=(V,E)$ be a simple directed graph. Then $G$ is an interval catch digraph if and only if there exists an ordering $``<"$ of $V$ such that
\begin{equation}\label{icd1}
\text{for }x<y<z\in V,\ xz\in E\Longrightarrow xy\in E \ \text{ and }\ zx\in E\Longrightarrow zy\in E.
\end{equation}
\end{thm}

\noindent  A matrix whose entries are only zeros and ones is a {\em binary} matrix. A binary matrix is said to satisfy {\em consecutive $1$'s property for rows} if its columns can be permuted in such a way that $1$'s in each row occur consecutively \cite{G}. For a simple undirected graph $G=(V,E)$, a matrix $A^{*}(G)$ is known as the {\em augmented adjacency matrix} of $G$ if we replace all principal diagonal elements of the adjacency matrix of $G$ by $1$ \cite{G}. 
Now one can check from \cite{Maehera} that with respect to the ordering described in Theorem \ref{e1}, $A^{*}(G)$ satisfy consecutive $1$'s property along rows. We call this ordering an {\em ICD ordering} of $V$. This ordering is not unique for an ICD. For a simple digraph $G=(V,E)$, we denote the set of (closed) neighbors $\Set{v\in V}{uv\in E}\cup\set{u}$ of $u$ by $d^+[u]$ for each $u\in V$. It is clear that elements of $d^+[u]$ are consecutive in an ICD ordering for each $u\in V$ if $G$ is an ICD. Moreover, if $\Set{(I_{u},p_{u})}{u\in V}$ is a pointed interval representation of an ICD $G$, then the points can be made distinct by slight adjustment and the increasing ordering of these points is an ICD ordering of the corresponding vertices. From \cite{ Golumbic2} we get to know that $\{x_{1},x_{2},x_{3}\}\subseteq V$ form a {\em diasteroidal triple} if for every permutation $\sigma$ of $\{1,2,3\}$ there is an $x_{\sigma(1)}$-avoiding \footnote{
A $v_{2}-v_{3}$ chain is $v_{1}$ avoiding if no initial endpoint of an arc of the chain precedes $v_{1}$ \cite{Golumbic2}} $x_{\sigma(2)}-x_{\sigma(3)}$ chain in $G$. 

\begin{thm} \cite{Prisner} \label{diate}
A digraph is an interval catch digraph if and only if it does not contain any diasteroidal triple.
\end{thm}

%\noindent 
%Let $G=(V,E)$ be a undirected graph. A set $\{x_{1},x_{2},x_{3}\}$ of vertices of $G$ is called {\em asteroidal triple} \cite{Golumbic2} if for every permutation $\sigma$ of $\{1,2,3\}$ there is an $a_{\sigma(1)}$-avoiding $a_{\sigma(2)}-a_{\sigma(3)}$ path in $G$. A graph which does not contain any cycle of length$\geq 4$ as induced subgraph is called {\em chordal}.

%\begin{thm}\cite{Shamik}\label{interval}
%A finite graph is an interval graph if and only if it is chordal and it contains no asteroidal triple.  
%\end{thm}

\vspace{0.3em}
\noindent A {\em proper interval graph} $G$ is an interval graph in which there is an interval representation of $G$ such that no interval contains another properly. A {\em unit interval graph} is an interval graph in which there is an interval representation of $G$ such that all intervals have the same length. Let $G=(V,E)$ is a graph and $v\in V$. Then the set $N[v]=\Set{u\in V}{u\text{ is adjacent to } v}\cup\set{v}$ is the {\em closed neighborhood} of $v$ in $G$. The {\em reduced graph} $\tilde{G}$ is obtained from $G$ by merging vertices having same closed neighborhood. $G(n,r)$ is a graph with $n$ vertices $x_{1}, x_{2}, \hdots, x_{n}$ such that $x_{i}$ is adjacent to $x_{j}$ if and only if $0 <|i-j| \leq r$, where $r<n$ is a positive integer. Among many characterizations of proper interval graph we list the following which will serve our purpose.

\begin{thm} \label{proper1} \cite{West, Looges, Shamik} (pg 387, ex $11.17$) 
Let G = (V,E) be an interval graph. Then the following are equivalent:
\begin{enumerate}
\item G is a proper interval graph.
\item G is a unit interval graph.
%\item For all $v\in V$, elements of $N[v]=\Set{u\in V}{uv\in E}\cup \{v\}$ are
%consecutive for some ordering of $V$ (closed neighborhood condition).
\item There exist a linear ordering $<$ on $V$ such that for every choice of vertices $u,v,w$\\ 
$u<v<w$ and $uw\in E$ implies $uv,vw\in E$.
\item $\tilde{G}$ is an induced subgraph of $G(n,r)$ for some positive integers $n,r$ with $n > r$.
\end{enumerate}
\end{thm}

\noindent The following characterization of MPTG is known:

\begin{thm} \label{mptg1} \cite{Catanzaro}
Let $G=(V,E)$ be a simple undirected graph. Then $G$ is an MPTG if and only if there is an ordering of vertices of $G$ such that the following condition holds:
\begin{equation}\label{4p1}
\text{For any } x<u<v<y,\ xv,uy\in E \Longrightarrow uv\in E.
\end{equation}
\end{thm}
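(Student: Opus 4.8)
The plan is to prove both directions by unwinding the edge condition $\{p_u,p_v\}\subseteq I_u\cap I_v$ into the two point-in-interval containments $p_u\in I_v$ and $p_v\in I_u$ (the containments $p_u\in I_u$, $p_v\in I_v$ being built into the definition of a pointed interval), and then ordering the vertices by the position of the points $p_u$ on the real line. After a generic perturbation we may assume all the $p_u$ are distinct, so this ordering is well defined.

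For the forward direction, suppose $G$ is an MPTG with representation $(I_u,p_u)$, write $I_u=[a_u,b_u]$, and order the vertices so that $u<v$ means $p_u<p_v$. Take any $x<u<v<y$ with $xv,uy\in E$; I want $uv\in E$, i.e. $p_u\in I_v$ and $p_v\in I_u$. From $xv\in E$ I get $a_v\leqslant p_x$, and since $p_x<p_u$ this forces $a_v\leqslant p_u$; combined with $p_u<p_v\leqslant b_v$ this gives $p_u\in I_v$. Symmetrically, $uy\in E$ gives $p_y\leqslant b_u$, hence $p_v<p_y\leqslant b_u$, and together with $a_u\leqslant p_u<p_v$ this gives $p_v\in I_u$. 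Thus $uv\in E$, and condition \eqref{4p1} holds.

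For the converse, the plan is to build a representation directly from the ordering. Label the vertices $v_1<\cdots<v_n$, set $p_{v_i}=i$, and for each $i$ let $\ell_i$ be the smallest index of a neighbour of $v_i$ (taking $\ell_i=i$ when $v_i$ has no left neighbour) and $r_i$ the largest (taking $r_i=i$ otherwise); put $I_{v_i}=[\ell_i,r_i]$. Then $p_{v_i}=i\in I_{v_i}$, and for $i<j$ the condition $p_{v_i}\in I_{v_j}$ and $p_{v_j}\in I_{v_i}$ reduces, the two outer inequalities being automatic, to $\ell_j\leqslant i$ and $r_i\geqslant j$. If $v_iv_j\in E$ these hold by the very definition of $\ell_j$ and $r_i$, so the representation never misses an edge.

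The one place condition \eqref{4p1} is actually consumed is the reverse inclusion, and this is the main point to watch. Suppose $\ell_j\leqslant i$ and $r_i\geqslant j$ for some $i<j$; I must show $v_iv_j\in E$. Writing $x=v_{\ell_j}$ and $y=v_{r_i}$, the definitions give $xv_j\in E$ and $v_iy\in E$, with indices $\ell_j\leqslant i<j\leqslant r_i$. If $\ell_j=i$ then $xv_j=v_iv_j\in E$ directly, and if $r_i=j$ then $v_iy=v_iv_j\in E$ directly; in the only remaining case $\ell_j<i<j<r_i$, the quadruple $(x,v_i,v_j,y)$ realises the pattern of \eqref{4p1} with middle vertices $v_i,v_j$, so \eqref{4p1} forces $v_iv_j\in E$. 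Hence the constructed intervals realise exactly the edges of $G$. The only subtlety is precisely this boundary analysis, ensuring the strict inequalities demanded by \eqref{4p1} are either satisfied or bypassed; the rest is bookkeeping.
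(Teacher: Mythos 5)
The paper does not prove this theorem; it is quoted as a known result from Catanzaro et al., so there is no internal proof to compare against. Your argument is correct and is essentially the standard one from that reference (order by the points $p_u$, and for the converse build $I_{v_i}$ from the extreme neighbour indices); the only cosmetic point is that the ``generic perturbation'' to make the $p_u$ distinct is unnecessary, since breaking ties arbitrarily works and all your inequalities go through with $\leqslant$ in place of $<$.
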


\begin{defn}
Let $A=(a_{ij})$ and $B=(b_{ij})$ be two $n\times n$ binary matrices. We define $A \wedge B=(c_{ij})$ where $c_{ij}=a_{ij}\wedge b_{ij}$ with the rules: $0\wedge 0=1\wedge 0=0\wedge 1=0$ and $1\wedge 1=1$.
\end{defn}

\noindent The above characterization leads to the following observations.

\begin{obs}\label{obsmptg}
Let $G$ be a simple undirected graph. Then following are equivalent:
\begin{enumerate}
\item $G$ is an MPTG.
\item There is an ordering of vertices of $G$ such that for any $u<v$, $u,v\in V$,
\begin{equation}
uv\not\in E \Longrightarrow uw\not\in E\text{ for all }w>v\text{ or, }wv\not\in E\text{ for all }w<u.
\end{equation}
\item There exists an ordering of vertices such that every $0$ above the principal diagonal of the augmented adjacency matrix $A(G)$ has either all entries right to it are $0$ or, all entries above it are $0$.
\item There exists a binary matrix $M$ with consecutive $1$'s property for rows such that the augmented adjacency matrix $A(G)=M\wedge M^T$.

\item  There exist an interval catch digraph $D$ such that $G=D\cap D^{T}$ where $D^{T}$ is the digraph obtained from $D$ by reversing direction of every arc.
\end{enumerate}
\end{obs}

\begin{proof}
The condition $2$ is equivalent to (\ref{4p1}) in the other way. Condition $3$ is a matrix version of condition $2$. Condition $4,5$ follows from definition of MPTG and ICD respectively.
\end{proof}

\noindent
Let $G=(V,E)$ be a simple undirected graph and $\emptyset\neq X\subseteq V$. Then $G[X]$ denotes the subgraph of $G$ induced by $X$. In Proposition $6.7$ of \cite{Catanzaro} it is proved that if $G$ is an MPTG with non-adjacent vertices $u$ and $v$, then $G[N(u)\cap N(v)]$ is an interval graph. Also in Proposition $7.1$ of \cite{Kaufmann} it is shown that $\overline{C_{n}}$, $n>9$ is not an max-tolerance graph. We show that these graphs are not MPTG as well.

\begin{obs}\label{ex1}
The complement of a cycle of length greater than $9$ is not an MPTG. 
\end{obs}

\begin{proof}
Suppose on contrary $\overline{C_{n}}$, $n>9$ is an MPTG. Now as $\overline{C_{n}}$, $n>9$ contain the graph $G^{'}_{1}$ with vertices $\{1,4,5,n-3,n-2,n\}$ in Figure \ref{figg1g2} (left) as induced subgraph where common neighbors of $n-2$ and $n-3$ form a chordless $4$-cycle and so is not an interval graph. Thus $G^{'}_{1}$ is not an MPTG. Since any subgraph of an MPTG must be an MPTG, hence the result follows.
\end{proof}

\begin{figure}[b]
\begin{center}
\begin{tikzpicture} [scale=0.7]
\draw[-][draw=black,thick] (1,0) -- (4,0);
\draw[-][draw=black,thick] (1,-1) -- (4,-1);
\draw[-][draw=black,thick] (1,0) -- (1,-1);
\draw[-][draw=black,thick] (4,0) -- (4,-1);
\draw[-][draw=black,thick] (2.5,1) -- (1,0);
\draw[-][draw=black,thick] (2.5,1) -- (4,0);
\draw[-][draw=black,thick] (2.5,-2) -- (1,-1);
\draw[-][draw=black,thick] (2.5,-2) -- (4,-1);
\draw[-][draw=black,thick] (2.5,1) -- (1,-1);
\draw[-][draw=black,thick] (2.5,1) -- (4,-1);
\draw[-][draw=black,thick] (2.5,-2) -- (1,0);
\draw[-][draw=black,thick] (2.5,-2) -- (4,0);
\node [left] at (1,0) {\tiny{$1$}};
\node [right] at (4,0) {\tiny{$4$}};
\node [left] at (1,-1) {\tiny{$5$}};
\node [right] at (4,-1) {\tiny{$n$}};
\node [above] at (2.5,1) {\tiny{$n-3$}};
\node [below] at (2.5,-2) {\tiny{$n-2$}};
\end{tikzpicture} \hspace{1in} \begin{tikzpicture} [scale=0.7]
\draw[-][draw=black,thick] (1,0) -- (4,0);
\draw[-][draw=black,thick] (1,-1) -- (4,-1);
\draw[-][draw=black,thick] (1,0) -- (1,-1);
\draw[-][draw=black,thick] (4,0) -- (4,-1);
\draw[-][draw=black,thick] (2.5,1) -- (1,0);
\draw[-][draw=black,thick] (2.5,1) -- (4,0);
\draw[-][draw=black,thick] (2.5,-2) -- (1,-1);
\draw[-][draw=black,thick] (2.5,-2) -- (4,-1);
\draw[-][draw=black,thick] (2.5,1) -- (1,-1);
\draw[-][draw=black,thick] (2.5,1) -- (4,-1);
\draw[-][draw=black,thick] (2.5,-2) -- (1,0);
\draw[-][draw=black,thick] (2.5,-2) -- (4,0);
\node[above] at (2.5,1.4){\tiny{$v_{5}$}};
\node[below] at (2.5,-2.4){\tiny{$v_{6}$}};
\node[left] at (0.6,0) {\tiny{$v_{1}$}};
\node[right] at (4.4,0){\tiny{$v_{2}$}};
\node[left] at (0.6,-1) {\tiny{$v_{4}$}};
\node[right] at (4.4,-1) {\tiny{$v_{3}$}};
\end{tikzpicture}
\caption{The graphs $G^{'}_1$ in Observation \ref{ex1} and $G^{'}_{2}$ in Example \ref{ex2}}\label{figg1g2}
\end{center}
\end{figure}

\noindent In the following example we show that MPTG and max-tolerance graph are not same. 

\begin{exmp} \label{ex2}
The graph $G^{'}_{2}$ in Figure \ref{figg1g2} (right) with vertex set $V=\{v_{i}|1\leq i\leq 6\}$ is not an MPTG as above. But it is a max-tolerance graph with the following interval and tolerance representation.\\
\noindent $I_{1}=[10,46],t_{1}=21, I_{2}=[20,50],t_{2}=18, I_{3}=[18,49.5],t_{3}=28.5, I_{4}=[15,60],t_{4}=31, I_{5}=[21,52],t_{5}=10, I_{6}=[12,50],t_{6}=30$.  
\end{exmp}

\begin{thm} \cite{Soto}\label{sp}
The graph class central MPTG properly contains the class of interval graphs.
\end{thm}

\section{Central max point tolerance graphs}

\noindent We begin with a trivial but important observation which will be used throughout the rest of the paper.

\begin{obs}
Let $\Set{I_u}{u\in V}$ be a collection of intervals, where $I_u=[\ell_u,r_u]$, $h_u=|I_u|=r_u-\ell_u$ and $c_u=\frac{\ell_u+r_u}{2}$. Then $\set{c_u,c_v}\subseteq I_u\cap I_v$ $\Longleftrightarrow$ $|c_v-c_u|\leqslant \dfrac{1}{2} \text{min}\set{h_u,h_v}$ $\Longleftrightarrow$ $\ell_v\leqslant c_u\leqslant c_v\leqslant r_u$ (for $c_u\leqslant c_v$).
\end{obs}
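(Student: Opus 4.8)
The plan is to exploit the symmetry between $u$ and $v$ and reduce to the single case $c_u \leqslant c_v$, as the parenthetical remark in the statement suggests. The starting point is the trivial fact that a center always lies in its own interval: since $c_u = \frac{\ell_u + r_u}{2}$ we have $\ell_u \leqslant c_u \leqslant r_u$, and likewise $\ell_v \leqslant c_v \leqslant r_v$. Consequently the condition $\set{c_u,c_v}\subseteq I_u\cap I_v$ collapses to just the two ``cross'' memberships $c_u \in I_v$ and $c_v \in I_u$, since $c_u \in I_u$ and $c_v \in I_v$ hold automatically.

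First I would prove the equivalence of the outer and inner conditions. Writing out $c_u \in I_v$ as $\ell_v \leqslant c_u \leqslant r_v$ and $c_v \in I_u$ as $\ell_u \leqslant c_v \leqslant r_u$, I would use the assumed ordering $c_u \leqslant c_v$ to discard the redundant inequalities: from $c_u \leqslant c_v \leqslant r_v$ the bound $c_u \leqslant r_v$ is free, and from $\ell_u \leqslant c_u \leqslant c_v$ the bound $\ell_u \leqslant c_v$ is free. What survives is exactly $\ell_v \leqslant c_u$ together with $c_v \leqslant r_u$, which, combined with the ambient $c_u \leqslant c_v$, is precisely the chain $\ell_v \leqslant c_u \leqslant c_v \leqslant r_u$.

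Next I would connect this chain to the middle condition by rewriting the endpoints in terms of centers and lengths. Using $\ell_v = c_v - \tfrac{h_v}{2}$ and $r_u = c_u + \tfrac{h_u}{2}$, the inequality $\ell_v \leqslant c_u$ becomes $c_v - c_u \leqslant \tfrac{h_v}{2}$, and $c_v \leqslant r_u$ becomes $c_v - c_u \leqslant \tfrac{h_u}{2}$. Since $c_u \leqslant c_v$ gives $\abs{c_v - c_u} = c_v - c_u$, the conjunction of these two bounds is equivalent to $\abs{c_v - c_u} \leqslant \tfrac{1}{2}\min\set{h_u,h_v}$, which is the middle condition. Closing the loop in this way establishes all three equivalences at once.

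There is no genuine obstacle here; the whole argument is elementary algebra on interval endpoints. The only point requiring a modicum of care is the bookkeeping of which inequalities become redundant under the assumption $c_u \leqslant c_v$, and verifying that the symmetric case $c_v \leqslant c_u$ is handled simply by interchanging the roles of $u$ and $v$, under which every displayed condition is manifestly symmetric.
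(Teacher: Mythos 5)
Your proposal is correct and follows essentially the same route as the paper: both reduce $\set{c_u,c_v}\subseteq I_u\cap I_v$ to the two cross-memberships (since each center lies in its own interval) and then unpack $I_v=[c_v-\tfrac{h_v}{2},\,c_v+\tfrac{h_v}{2}]$ into the distance bound. Your write-up is in fact slightly more complete, since you explicitly derive the endpoint chain $\ell_v\leqslant c_u\leqslant c_v\leqslant r_u$ under the assumption $c_u\leqslant c_v$, whereas the paper only remarks that $c_u\in I_v$ is equivalent to $\ell_v\leqslant c_u\leqslant r_v$ and leaves the rest implicit.
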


\begin{proof}
We have $c_{u}\in I_{v}=[l_{v},r_{v}]=[c_{v}-\dfrac{h_{v}}{2},c_{v}+\dfrac{h_{v}}{2}]$ $\Longleftrightarrow$ $c_{v}-\dfrac{h_{v}}{2}\leq c_{u}\leq c_{v}+\dfrac{h_{v}}{2}$ $\Longleftrightarrow$ $-\dfrac{h_{v}}{2}\leq c_{u}-c_{v}\leq \dfrac {h_{v}}{2}$
$\Longleftrightarrow$ $|c_{u}-c_{v}|\leq \dfrac{h_{v}}{2}$. Thus $\{c_{u},c_{v}\}\subseteq I_{u}\cap I_{v}$
$\Longleftrightarrow$ $|c_{u}-c_{v}|\leq \dfrac{1}{2} \text{min}\{h_{u},h_{v}\}$. Also it is clear that $c_{u}\in I_{v}=[l_{v},r_{v}]$ $\Longleftrightarrow$ $l_{v}\leq c_{u}\leq r_{v}$.
\end{proof}

\noindent In the above section we observed that the graph classes of max-point-tolerance graphs and max tolerance graphs are not same. Now it is interesting to see that the classes of central MPTG and unit max-tolerance graphs are same.

\begin{thm}\label{umtg}
Let $G$ be a simple undirected graph. Then $G$ is a central MPTG if and only if $G$ is a unit max-tolerance graph.
\end{thm}

\begin{proof}
Let $G=(V,E)$ be a central MPTG with a central MPTG representation $(I_{u},c_{u})$ where $I_{u}=[l_{u},r_{u}]$, $c_{u}$ be the center point of $I_{u}$ for each vertex $u\in V$. Let $h_{u}=r_{u}-l_{u}$ for all $u\in V$. Choose $h_{0}> \text{max} \Set{h_{u}}{u\in V}$. Define $t_{u}=\dfrac{h_{0}-h_{u}}{2}$, $y_{u}=c_{u}+\dfrac{h_{0}}{2}$ and $T_{u}=[c_{u},y_{u}]$. Note that $t_{u}>0$ and $|T_{u}|=y_{u}-c_{u}=\dfrac{h_{0}}{2}$ which is a constant for all $u\in V$.

\noindent Suppose $uv\in E$ and $c_{v}\leq c_{u}$. Then $c_{u}-c_{v}\leq \dfrac{1}{2} \text{min}\{h_{u},h_{v}\}\leq \dfrac{h_{0}}{2}$. So $c_{u}\leq c_{v}+\dfrac{h_{0}}{2}=y_{v}$. This implies $c_{v}\leq c_{u}\leq y_{v}$. So $T_{u}\cap T_{v}=[c_{u},y_{v}]\neq \emptyset$ and $|T_{u}\cap T_{v}|=y_{v}-c_{u}=c_{v}+\dfrac{h_{0}}{2}-c_{u}=\dfrac{h_{0}}{2}-(c_{u}-c_{v})\geq \dfrac{h_{0}-h_{u}}{2},\dfrac{h_{0}-h_{v}}{2}$. So $y_{v}-c_{u}\geq t_{u},t_{v}$, i.e.,
\begin{eqnarray}
\label{cmp1}
|T_{u}\cap T_{v}|\geq \text{max} \{t_{u},t_{v}\}.
\end{eqnarray}

\noindent On the other hand, (\ref{cmp1}) implies $T_{u}\cap T_{v}\neq \emptyset$ and $c_{v}\leq c_{u}\leq y_{v}\leq y_{u}$. So $|T_{u}\cap T_{v}|=y_{v}-c_{u}$. Now $y_{v}-c_{u}\geq \text{max}\{t_{u},t_{v}\}$ implies $\dfrac{h_{0}}{2} -(c_{u}-c_{v})\geq \text{max} \{\dfrac{h_{0}-h_{u}}{2},\dfrac{h_{0}-h_{v}}{2}\}$. Thus $c_{u}-c_{v}\leq \dfrac{1}{2} \text{min} \{h_{u},h_{v}\}$, i.e., $uv\in E$. Therefore $G$ is a unit max-tolerance graph with interval representation $\{T_{u}=[c_{u},y_{u}]|u\in V\}$ and tolerances $\{t_{u}|u\in V\}$ as defined above.

\vspace{0.5em}\noindent Conversely, let $G=(V,E)$ be a unit max-tolerance graph with interval representation \\
$\Set{T_{u}=[l_{u},r_{u}]}{u\in V}$ and tolerances $\Set{t_{u}}{u\in V}$. Let $h=|T_{u}|$ for all $u\in V$. Define $I_{u}=[l_{u}-(h-t_{u}),l_{u}+(h-t_{u})]$. Then $c_{u}$, the center of $I_{u}=l_{u}$ and $h_{u}=|I_{u}|=2(h-t_{u})<2h$. Suppose $uv\in E$. Then $|T_{u}\cap T_{v}|\geq \text{max} \{t_{u},t_{v}\}$. Now for $l_{u}\leq l_{v}$, $|T_{u}\cap T_{v}|=r_{u}-l_{v}$. Then $r_{u}-l_{v}\geq \text{max}\{t_{u},t_{v}\}$, i.e., $h+l_{u}-l_{v}\geq \text{max}\{t_{u},t_{v}\}$. This implies $l_{v}-l_{u}\leq \text{min}\{h-t_{u},h-t_{v}\}$, i.e., $c_{v}-c_{u}\leq \dfrac{1}{2}\text{min} \{h_{u},h_{v}\}$. Finally, the condition that $0<c_{v}-c_{u}\leq \dfrac{1}{2} \text{min} \{h_{u},h_{v}\} \Rightarrow l_{v}-l_{u}<h$ and $l_{u}\leq l_{v}<l_{u}+h=r_{u}$. So $T_{u}\cap T_{v}\neq \emptyset$ and $|T_{u}\cap T_{v}|=r_{u}-l_{v}$. Then $l_{v}-l_{u}=c_{v}-c_{u}\leq \dfrac{1}{2} \text{min}\{h_{u},h_{v}\}$ implies $|T_{u}\cap T_{v}|\geq \text{max} \{t_{u},t_{v}\}$, i.e., $uv\in E$. Thus $\Set{(I_{u},c_{u})}{u\in V}$ is a central MPTG representation of G.
\end{proof}

\begin{rmk}
{\em In (\cite{Golumbic2}, page $215$) Golumbic wrote that ``Every interval graph is a proper max-tolerance graph. It is not yet known if this can be strengthened to unit max-tolerance." The above theorem shows central MPTG and unit max-tolerance graphs denote the same graph class. Hence from Theorem \ref{sp} one can easily conclude that every interval graph is a unit max-tolerance graph. Thus we settle the above query posed in the book of Golumbic.}
\end{rmk}

\noindent
In the sequel we show that the class of max-tolerance graphs properly contains the class of central MPTG. We begin with the following definition which unfolds more insight in the structure of a central MPTG.

\begin{defn}
(\textit{$C$-order})
Let $G=(V,E)$ be a central MPTG with (distinct) center points $\Set{c_{u}}{u\in V}$ of the intervals $\Set{I_{u}}{u\in V}$ in its central MPTG representation $\Set{(I_{u},c_{u})}{u\in V}$. The $C$-order of the set $V$ is the total order induced by the center points. For convenience abusing notation, henceforth we write $u < v$ if and only if $c_{u}<c_{v}$. 
\end{defn}
%Then this linear order of vertices is called a $C$-order of $V$.

\noindent In the following we present a necessary condition for central MPTG.

\begin{thm}\label{4pt}
Let $G=(V,E)$ be a central MPTG. Then there is an ordering $\prec^{*}$ of vertices of $G$ such that the following condition holds:
\begin{equation}\label{cmptg1}
\text{For any } x\prec^{*}u\prec^{*}v\prec^{*}y,\ xv,uy\in E \Longrightarrow uv\in E \text{ and } (xu\in E \text{ or } vy\in E \text{ or } xu,vy\in E).
\end{equation}
\end{thm}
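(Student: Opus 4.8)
The plan is to take the required ordering to be the $C$-order of the vertices (ordering by center points), and to translate every adjacency into the center-point inequality of the Observation: writing $h_w=|I_w|$, for any two vertices with $c_p<c_q$ one has $pq\in E$ if and only if $c_q-c_p\leqslant\frac{1}{2}\min\{h_p,h_q\}$. So I fix $x<u<v<y$ in the $C$-order, meaning $c_x<c_u<c_v<c_y$, and assume $xv,uy\in E$, which unpacks to $c_v-c_x\leqslant\frac{1}{2}\min\{h_x,h_v\}$ and $c_y-c_u\leqslant\frac{1}{2}\min\{h_u,h_y\}$.

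First I would dispatch the edge $uv$, which is the easy half. Since $c_u,c_v$ lie strictly between $c_x$ and $c_y$, monotonicity of the gaps gives $c_v-c_u<c_v-c_x\leqslant\frac{1}{2}h_v$ (from $xv\in E$) and $c_v-c_u<c_y-c_u\leqslant\frac{1}{2}h_u$ (from $uy\in E$). Hence $c_v-c_u\leqslant\frac{1}{2}\min\{h_u,h_v\}$, so $uv\in E$.

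The main work, and the step I expect to be the obstacle, is the disjunction ($xu\in E$ or $vy\in E$). I would argue by contradiction, assuming both are non-edges. The key preliminary observation is that the ``outer'' tolerance parts of the conditions for $xu$ and $vy$ are automatically satisfied: from $xv\in E$ we get $c_u-c_x<c_v-c_x\leqslant\frac{1}{2}h_x$, and from $uy\in E$ we get $c_y-c_v<c_y-c_u\leqslant\frac{1}{2}h_y$. Therefore $xu\notin E$, i.e. $c_u-c_x>\frac{1}{2}\min\{h_x,h_u\}$, cannot hold through $h_x$; it forces $\min\{h_x,h_u\}=h_u$ and $c_u-c_x>\frac{1}{2}h_u$. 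Symmetrically $vy\notin E$ forces $c_y-c_v>\frac{1}{2}h_v$.

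Finally I would combine these with the given edges to reach a contradiction. From $uy\in E$ we have $c_y-c_u\leqslant\frac{1}{2}h_u<c_u-c_x$, whence $c_x+c_y<2c_u$; from $xv\in E$ we have $c_v-c_x\leqslant\frac{1}{2}h_v<c_y-c_v$, whence $2c_v<c_x+c_y$. Chaining these gives $2c_v<c_x+c_y<2c_u$, i.e. $c_v<c_u$, contradicting $c_u<c_v$. Hence at least one of $xu,vy$ is an edge, which completes the proof. The entire argument runs purely through the midpoint inequalities, and the only genuinely subtle point is recognizing that the outer interval lengths $h_x,h_y$ drop out of the non-adjacency hypotheses, so that any failure of $xu$ or $vy$ must be charged to $h_u$ or $h_v$ respectively.
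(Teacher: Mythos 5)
Your proof is correct and follows essentially the same route as the paper: both order the vertices by center points, both obtain $uv\in E$ from the sandwiching of $c_u,c_v$ between $c_x$ and $c_y$, and both show that simultaneous failure of $xu$ and $vy$ must be charged to $h_u$ and $h_v$ (equivalently, $c_x\notin I_u$ and $c_y\notin I_v$), leading to the identical contradiction $c_v<\frac{c_x+c_y}{2}<c_u$. The only difference is notational: you phrase everything via the midpoint inequalities of the Observation, whereas the paper speaks of containment of center points in intervals.
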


\begin{proof}
Let $G=(V,E)$ be a central MPTG with a central MPTG representation $(I_{u},c_{u})$ for each $u\in V$. We arrange vertices according to the increasing order of center points (i.e., in $C$-order) of representing intervals. Suppose in this ordering we have $x<u<v<y$ and $xv,uy\in E$. Then $c_{v},c_{x}\in I_{v}\cap I_{x}$ and $c_{u},c_{y}\in I_{u}\cap I_{y}$. Also we have $c_{x}<c_{u}<c_{v}<c_{y}$. Now $c_{x},c_{v}\in I_{v}\Rightarrow c_{u}\in I_{v}$ and $c_{u},c_{y}\in I_{u} \Rightarrow c_{v}\in I_{u}$. Therefore $uv\in E$. Again $c_{x},c_{v}\in I_{x} \Rightarrow c_{u}\in I_{x}$ and $c_{u},c_{y}\in I_{y}\Rightarrow c_{v}\in I_{y}$. Thus if $xu,vy \notin E$, then $c_{x} \notin I_{u}$ and $c_{y} \notin I_{v}$. But then $c_{u}-c_{x}>c_{y}-c_{u}$ as $c_{x} \notin I_{u}$ but $c_{y}\in I_{u}$, and $c_{y}-c_{v}>c_{v}-c_{x}$ as $c_{y}\notin I_{v}$ but $c_{x}\in I_{v}$. Combining these inequalities we have $c_{v}<\dfrac{c_{x}+c_{y}}{2}<c_{u}$ which is a contradiction. Therefore $xu\in E$ or, $vy\in E$ or, $xu,vy\in E$.
\end{proof}

\noindent
In \cite{Soto}, it is shown that every cycle $C_n$ of length $n\geq 3$ is a central MPTG. 

\begin{defn}
A cycle $C_{n}$ is said to be {\em circularly consecutive $C$-ordered} if starting from a fixed vertex (say $u$) one can order all its vertices in a circularly consecutive way in clockwise (or anticlockwise) direction until $u$ is reached in a $C$-order.
\end{defn}

\noindent
In particular, for $n=4$, one can obtain a central MPTG representation of $C_{4}$ from \cite{Soto} where vertices are circularly consecutive $C$-ordered. We state a stronger version in the following corollary.

\begin{cor}  \label{c6}
Any induced $C_{4}$ in central MPTG must be circularly consecutive $C$-ordered.
\end{cor}  
%i.e; starting from a fixed vertex (say $u$) we can order all of its vertices in a circularly consecutive way in clockwise (or anticlockwise) direction until $u$ is reached.

\begin{proof}
All other possible $C$-orderings of vertices will violate (\ref{cmptg1}). Hence the proof follows.
\end{proof}

\noindent
Similarly one can obtain the following:

\begin{cor}
Any induced $P_{4}$ in central MPTG must have vertex consecutive ending edges i.e; vertices corresponding to ending edges of $P_{4}$ occur consecutively in a $C$-order (up to permutations between them) at least in one end.
\end{cor}

\begin{thm}\label{c61}
 $\overline{C_{6}}$ is a max-tolerance graph but it is not a central MPTG.
 \end{thm}
 
\begin{proof}
Let $\{v_{i}| 1\leq i\leq 6\}$ be the vertices occurred in circularly consecutive way in clockwise (or anticlockwise) order in $C_{6}$. We assign the following intervals and tolerances for all the vertices so that they satisfy max-tolerance representation in $\overline{C_{6}}$.
 $I_{v_{1}}=[0,20],t_{v_{1}}=10,
 I_{v_{2}}=[12,24],t_{v_{2}}=6, I_{v_{3}}=[0,22],t_{v_{3}}=11, I_{v_{4}}=[9.5,19.5],t_{v_{4}}=5,
 I_{v_{5}}=[7.5,30.5],t_{v_{5}}=11.5, I_{v_{6}}=[10.5,21.5],t_{v_{6}}=5.5$. 
\begin{figure}[t]
\begin{center}
\begin{tikzpicture} [scale=0.9]
\draw[-][draw=black,thick] (1.5,0) -- (3.7,0);
\draw[-][draw=black,thick] (0.9,1) -- (2.9,1);
\draw[-][draw=black,thick] (-1.3,2) -- (3.7,2);
\node [left] at (1,0) {$I_{6}$};
\node [left] at (0.6,1) {$I_{4}$};
\node [left] at (-1.5,2) {$I_{1}$};
\draw [fill=black](2.6,0) circle [radius=0.05];
\draw [fill=black](1.9,1) circle [radius=0.05];
\draw [fill=black](1.2,2) circle [radius=0.05];
\node [left] at (1,0) {$I_{6}$};
\node [below] at (1.5,0) {$a_{6}$};
\node [below] at (3.7,0) {$b_{6}$};
\node [below] at (2.6,0) {$c_{6}$};
\node [below] at (0.8,1) {$a_{4}$};
\node [below] at (2.9,1) {$b_{4}$};
\node [below] at (1.9,1) {$c_{4}$};
\node [below] at (-1.3,2) {$a_{1}$};
\node [below] at (1.5,2) {$c_{1}$};
\node [below] at (3.7,2) {$b_{1}$};
\draw [fill=black](3.3,-1) circle [radius=0.05];
\node [below] at (3.05,-1) {$c_{3}$};
\draw[-][draw=black] (1.2,3) -- (1.2,-5);
\draw[-][draw=black] (3.3,3) -- (3.3,-5);
\node [below] at (0.5,-1) {$a_{3}$};
%\draw [fill=black](0.5,-1) circle [radius=0.05];
\draw[-][draw=black,thick] (0.5,-1) -- (5.6,-1);
\node [left] at (0.3,-1) {$I_{3}$};
\node [below] at (1.5,-2) {$a_{2}$};
%\draw [fill=black](1.3,-2) circle [radius=0.05];
\node [below] at (3,-2) {$b_{2}$};
%\draw [fill=black](3.1,-2) circle [radius=0.05];
\draw[-][draw=black,thick] (1.4,-2) -- (3,-2);
%\draw [fill=black](2.15,-2) circle [radius=0.05];
%\node [below] at (2.15,-2) {$c_{2}$};
\node [left] at (1,-2) {$I_{2}$};

%\draw [fill=black](5.6,-1) circle [radius=0.05];
\node [below] at (5.6,-1) {$b_{3}$};

\draw[-][draw=black,thick] (0.5,-3) -- (3.8,-3);
\node [below] at (0.5,-3) {$a_{5}$};
\node [below] at (3.8,-3) {$b_{5}$};
%\draw [fill=black](2.15,-3) circle [radius=0.05];
%\node [below] at (2.15,-3) {$c_{5}$};

\node [left] at (0.3,-3) {$I_{5}$};
\end{tikzpicture} 
\caption{Relative positions of intervals described in the proof of Theorem \ref{c61}}\label{figg1g3}
\end{center}
\end{figure}

\vspace*{.2 em}

\noindent Now suppose $\overline{C_{6}}=(V,E)$ is a central MPTG with central MPTG representation $(I_{v},c_{v})$ where $I_{v}=[a_{v},b_{v}]$, $c_{v}$ be the center point of $I_{v}$ for each $v\in V$. Let $\{v_{i}|1\leq i \leq 6\}$ be the vertices occurred in circularly consecutive way in clockwise (or anticlockwise) order in $C_{6}$. It is easy to check that the subgraph induced by deleting the vertices $\{v_{2},v_{5}\}$ from $\overline{C_{6}}$ is a $C_{4}$. Now from  Corollary \ref{c6} we can conclude that the vertices in $C_{4}=\{v_{1},v_{4},v_{6},v_{3}\}$ are circularly consecutive $C$-ordered. Without loss of generality we can take $c_{1}<c_{4}<c_{6}<c_{3}$.
As $v_{1}v_{5},v_{3}v_{5}\in E$, $a_{5}\leq c_{1}$ and $c_{3}\leq b_{5}$.
Thus we get $[c_{1},c_{3}]\subseteq [a_{5},b_{5}]$. Hence
$c_{4}\in [c_{1},c_{3}]$ imply $c_{4}\in I_{5}$. Below we will show $c_{5}\in I_{4}$ (see Figure \ref{figg1g3}) which lead us to contradiction as $v_{4},v_{5}$ are nonadjacent in $\overline{C_{6}}$. 

\vspace*{.2 em}
\noindent 
%First we show $b_{4}<b_{6}$.
 As $v_{3}v_{6}\in E$, $c_{3}\leq b_{6}$. Again $v_{1}v_{3}\in E$ imply 
$[c_{1},c_{3}]\subseteq I_{3}$ as $c_{1}<c_{3}$. Moreover $c_{4} \in [c_{1},c_{3}]\subseteq I_{3}$. Hence $c_{3}\notin I_{4}$ as $v_{3}v_{4}\notin E$,  which imply $b_{4}<c_{3}$ as $c_{4}<c_{3}$. Combining we get 
\begin{eqnarray}\label{1}
b_{4}<c_{3}\leq b_{6} 
\end{eqnarray}

\vspace*{.1 em}
\noindent 
%Next we show $c_{5}<a_{6}$.
Since $v_{1}v_{5}, v_{3}v_{5}\in E$, we get $[c_{1},c_{3}]\subseteq I_{5}$ as $c_{1}<c_{3}$. Now as $c_{6}\in[c_{1},c_{3}]$, $c_{6}\in I_{5}$. Hence  $c_{5}\notin I_{6}$ as $v_{5}v_{6}\notin E$. Hence $c_{5}<a_{6}$ or $c_{5}>b_{6}$. Again $c_{5}\leq b_{4}$ (as $v_{4}v_{5}\in E)$ and hence $c_{5}$ can not be greater than $b_{6}$ as $b_{4}<b_{6}$ from (\ref{1}). Thus we get 
\begin{eqnarray}\label{2}
c_{5}<a_{6}
\end{eqnarray}

\noindent 
%Now we show $a_{4}\leq c_{1}<a_{6}<b_{4}<b_{6}$.
Note that $v_{1}v_{3}\in E$ imply $[c_{1},c_{3}]\subseteq I_{1}$ as $c_{1}<c_{3}$. Again $c_{6}\in [c_{1},c_{3}]\subseteq I_{1}$. But as $v_{1}v_{6}\notin E$, $c_{1}<a_{6}$ as $c_{1}<c_{6}$. Again $v_{1}v_{4}\in E$ imply $a_{4}\leq c_{1}$.  Hence combining we get $a_{4}\leq c_{1}<a_{6}$. Again $v_{4}v_{6}\in E$ imply $a_{6}\leq c_{4}$ and $c_{6}\leq b_{4}$. Hence $a_{6}\leq c_{4}<c_{6}\leq b_{4}$. Thus combining these inequalities and using (\ref{1}) one can conclude
 $a_{4}\leq c_{1}<a_{6}\leq c_{4}<c_{6}\leq b_{4}<c_{3}\leq b_{6}$. Thus we get
 \begin{eqnarray}\label{3}
 a_{4}\leq c_{1}<a_{6}<b_{4}<c_{3}\leq b_{6}
\end{eqnarray}																				 
%Now we show $I_{2}\subseteq [c_{1},c_{3}]$. 
As $v_{2} v_{4}, v_{2}v_{6}\in E$ imply $c_{2}\in I_{4}\cap I_{6}=[a_{6},b_{4}]\subseteq [c_{1},c_{3}]\subseteq I_{1}, I_{3}$ (from (\ref{3})) which imply $c_{2}\in I_{1}, I_{3}$. Now $v_{1}v_{2}\notin E$ imply $a_{2}>c_{1}$ as $c_{1}<c_{2}$ from above. Again $v_{2}v_{3}\notin E$ imply $b_{2}<c_{3}$ as $c_{2}<c_{3}$ from above. Thus we get $c_{1}<a_{2}\leq c_{2}\leq b_{2}<c_{3}$, i.e; 
 \begin{eqnarray}\label{4}
 [a_{2},b_{2}]\subseteq [c_{1},c_{3}].
\end{eqnarray}
\vspace{.2em}\noindent We now show $c_{5}\in I_{4}$. As $v_{2}v_{5}\in E$, $c_{5}\in [a_{2},b_{2}]\subseteq [c_{1},c_{3}]\subseteq [a_{4},b_{6}]$ from (\ref{3}) and (\ref{4}). 
%Hence $c_{5}\in I_{4}$ or $I_{6}$ from (\ref{3}). 
Now using (\ref{2}) one can conclude that $c_{5}$ must belong to $I_{4}$. 

\vspace*{.2 em}
\noindent From above we can conclude now that no CMPTG representation of $\overline{C_{6}}$ can be found with respect to the above $C$-ordering. For other possible $C$-orderings following similar type argument one can reach to contradiction.
\end{proof}

\noindent
Now we present a sufficient condition for an MPTG to be a central MPTG.

\begin{thm}
Let $G=(V,E)$ be an MPTG with $n$ vertices. Let the ordering $\set{v_1,v_2,\ldots,v_n}$ of vertices of $G$ that satisfies (\ref{4p1}) and each $v_i$ corresponds to a natural number $x_i$ such that $x_1<x_2<\cdots <x_n$ and the following conditions hold for all $i=1,2,\ldots,n$:
\begin{eqnarray}
x_{i_2+1}-x_i>x_i-x_{i_1} & \text{when} & i_2<n \label{cmptg2}\\
x_i-x_{i_1-1}>x_{i_2}-x_i & \text{when} & i_1>1 \label{cmptg3}
\end{eqnarray}
where $i_1$ and $i_2$ be the least and the highest indices such that $i_1=i$ or, $v_iv_{i_1}\in E$ and $i_2=i$ or, $v_iv_{i_2}\in E$. Then $G$ is a central MPTG.
\end{thm}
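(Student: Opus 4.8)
The plan is to realize $G$ as a central-max-point-tolerance graph by placing the center of the $i$-th interval at $c_i = x_i$ and choosing its radius (half-length) to be exactly the distance from $x_i$ to its farthest neighbor. Concretely, I would set
\[
\rho_i = \max\{x_i - x_{i_1},\; x_{i_2} - x_i\}, \qquad I_i = [\,x_i - \rho_i,\; x_i + \rho_i\,],
\]
so that $c_i = x_i$ is the midpoint of $I_i$ and $|I_i| = 2\rho_i$. By the opening Observation of this section, $\{c_i, c_j\} \subseteq I_i \cap I_j$ (i.e.\ $v_iv_j$ is an edge of this representation) if and only if $|x_i - x_j| \le \min\{\rho_i, \rho_j\}$. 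Hence it suffices to show that for $i < j$ one has $v_iv_j \in E$ if and only if $x_j - x_i \le \min\{\rho_i, \rho_j\}$. Isolated vertices give $\rho_i = 0$, which is harmless: a degenerate interval can be inflated by a common $\varepsilon$ without disturbing any adjacency, should genuine intervals be required.

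For the forward (edge) direction, suppose $v_iv_j \in E$ with $i < j$. Then $j$ is a neighbor of $v_i$, so $j \le i_2$ and $x_j - x_i \le x_{i_2} - x_i \le \rho_i$; dually $i$ is a neighbor of $v_j$, so $j_1 \le i$ and $x_j - x_i \le x_j - x_{j_1} \le \rho_j$. Thus $x_j - x_i \le \min\{\rho_i, \rho_j\}$, as needed. This part is routine and uses neither (\ref{cmptg2}) nor (\ref{cmptg3}).

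The substance is the non-edge direction: given $v_iv_j \notin E$ with $i < j$, I must produce $x_j - x_i > \min\{\rho_i, \rho_j\}$, and the two hypotheses enter through two cases. If $j > i_2$, then $j \ge i_2 + 1$ and $i_2 < n$, so (\ref{cmptg2}) gives $x_j - x_i \ge x_{i_2+1} - x_i > x_i - x_{i_1}$, while trivially $x_j - x_i > x_{i_2} - x_i$; together $x_j - x_i > \rho_i \ge \min\{\rho_i,\rho_j\}$. The harder case is the ``gap'' $i < j < i_2$, where $v_i$ reaches past $j$ on the right yet misses $j$. Here I would first use the MPTG order property (\ref{4p1}) to show $j_1 > i$: if instead $v_j$ had a neighbor $v_{j_1}$ with $j_1 < i$, then applying (\ref{4p1}) to $j_1 < i < j < i_2$ with the edges $v_{j_1}v_j, v_iv_{i_2} \in E$ would force $v_iv_j \in E$, a contradiction (and $j_1 = i$ is excluded outright). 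With $j_1 > i$ in hand, condition (\ref{cmptg3}) applies to $j$ and yields $x_j - x_i \ge x_j - x_{j_1 - 1} > x_{j_2} - x_j$, while $x_{j_1} > x_i$ gives $x_j - x_i > x_j - x_{j_1}$; hence $x_j - x_i > \rho_j \ge \min\{\rho_i, \rho_j\}$.

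I expect the gap case to be the main obstacle, since it is the only place where the MPTG characterization (\ref{4p1}) is genuinely needed: the deduction $j_1 > i$ is what converts the purely combinatorial MPTG hypothesis into the metric inequality controlled by (\ref{cmptg3}), and it is easy to overlook that a neighbor of $v_j$ lying to the left of $v_i$ would collapse the non-edge back into an edge. Once the case split is set up correctly ($j > i_2$ handled by (\ref{cmptg2}); $i < j < i_2$ handled by (\ref{cmptg3}) after establishing $j_1 > i$), the remaining inequalities are immediate from the monotonicity $x_1 < \cdots < x_n$ and the definition of $\rho_i$.
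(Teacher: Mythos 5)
Your proposal is correct and follows essentially the same route as the paper: the identical construction $I_i=[x_i-\rho_i,\,x_i+\rho_i]$ with $\rho_i=\max\{x_i-x_{i_1},x_{i_2}-x_i\}$, the same routine verification for edges, and the same use of (\ref{4p1}) to rule out $j_1<i<j<i_2$ so that a non-edge falls into one of the two cases handled by (\ref{cmptg2}) and (\ref{cmptg3}). Your write-up is in fact slightly cleaner in that it routes the adjacency check through the section's opening Observation, explicitly notes where each numbered hypothesis is invoked (the paper mislabels (\ref{cmptg3}) as (\ref{cmptg2}) at one point), and remarks on the degenerate $\rho_i=0$ case.
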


\begin{proof}
Suppose the conditions hold. Define $r_{i}=\text{max}\{x_{i}-x_{i_{1}},x_{i_{2}}-x_{i}\}$ and $I_{i}=[x_{i}-r_{i},x_{i}+r_{i}]$ for $i=1,2,\hdots,n$. We show that $G=(V,E)$ is a central MPTG with an interval representation $\{I_{v_{i}}|i=1,2,\hdots,n\}$ where $V=\{v_{1},v_{2},\hdots,v_{n}\}$ and this ordering of vertices satisfies (\ref{4p1}). Suppose $v_{i}v_{j}\in E$. Then by definition of $i_{1}$ and $i_{2}$, we have $x_{i_{1}}\leq x_{j}\leq x_{i_{2}}$ and $x_{j_{1}}\leq x_{i}\leq x_{j_{2}}$. Then $x_{i}-x_{i_{1}}\geq x_{i}-x_{j}$ and $x_{i_{2}}-x_{i}\geq x_{j}-x_{i}$ which imply $|x_{i}-x_{j}|\leq r_{i}$ and so $x_{j}\in I_{v_{i}}$. Similarly $x_{i}\in I_{v_{j}}$. Hence $\{x_{i},x_{j}\}\subseteq I_{v_{i}}\cap I_{v_{j}}$. Now let $v_{i}v_{j}\notin E$. Without loss of generality we assume $i<j$. Suppose $j_{1}<i$ and $j<i_{2}$. Then we have $j_{1}<i<j<i_{2}$ and $v_{j_{1}}v_{j},v_{i}v_{i_{2}}\in E$. Then by (\ref{4p1}), $v_{i}v_{j}\in E$, which is a contradiction. Thus either $i<j_{1}$ or $j>i_{2}$. Then $i\leq j_{1}-1$ or $j\geq i_{2}+1$. For the first inequality by (\ref{cmptg2}), we have $x_{j}-x_{i}\geq x_{j}-x_{j_{1}-1}> x_{j_{2}}-x_{j}$. Also $x_{j}-x_{i}>x_{j}-x_{j_{1}}$, as $x_{i}<x_{j_{1}}$. Thus $x_{j}-x_{i}>r_{j}$ which implies $x_{i}\notin I_{v_{j}}$. Similarly $j\geq i_{2}+1$ implies $x_{j}\notin I_{v_{i}}$. Therefore $G$ is a central MPTG.
\end{proof}

\begin{defn}
A central MPTG $G=(V,E)$ is called {\em proper} if it has an interval representation with the required condition such that no interval contains another properly. We call it {\em proper-central-max-point tolerance graph} (in brief, proper central MPTG). Similarly, a central MPTG $G=(V,E)$ is called {\em unit} if it has an interval representation with the required condition such that every interval has unit (or, same) length. We call it {\em unit-central-max-point tolerance graph} (in brief, unit central MPTG).  
\end{defn}

\begin{thm}\label{pi1}
Let $G$ be a simple undirected graph. Then the following are equivalent.

\begin{enumerate}

\item $G$ is a proper central MPTG.
\item $G$ is a unit central MPTG.
\item $G$ is a proper interval graph.

\end{enumerate}
\end{thm}

\begin{proof}
$\mathbf{(1\Longleftrightarrow 2):}$ Let $G$ be a proper central MPTG with respect to the representation $(I_{i},c_{i})$ where $I_{i}=[a_{i},b_{i}]$, $c_{i}$ be the center point of $I_{i}$ for each vertex $i\in V$. First we arrange the intervals according to increasing order of left end points. As no interval properly contains another, the right end points have the same order as left end points and so as the center points as well. We process the representation from left to right, adjusting all intervals to length $l$ where $l$ is the length of first interval (i.e; $|I_{1}|=l$). At each step until all intervals been adjusted $I_{x}$ be the leftmost unadjusted interval.
% that has the leftmost left endpoint among all unadjusted intervals.  

\noindent Let $I_{j}$ be an adjusted interval occurs before $I_{x}$, then one of following things happen.

\begin{enumerate}
\item[(1)] $c_{j}\notin I_{x}$.

\item[(2)] $c_{j}\in I_{x},c_{x}\in I_{j}$.

\item[(3)] $c_{j}\in I_{x}$ but $c_{x}\notin I_{j}$.
\end{enumerate}

\noindent Let $I_{j_{1}}$ and $I_{j_{2}}$ be any two adjusted intervals referred in $(2)$ and $(3)$ respectively. Then $I_{j_{2}}$ must occur before $I_{j_{1}}$ otherwise the right end point of $I_{j_{2}}$ would occur before $c_{x}$ and so $I_{j_{2}}$ would be properly contained in $I_{j_{1}}$ as $I_{j_{1}}$ contains $c_{x}$. But this is a contradiction.

\noindent Now if $I_{x}$ does not contain center point of any adjusted intervals then take 
$\alpha=a_{x}$. If $I_{x}$ contains center points of some adjusted intervals and $I_{i}$ be the leftmost among them, then $c_{l}\in I_{x}$ for all $i\leq l\leq x$ as all of them have same length. Now if $c_{x}\in I_{i}$, then take $\alpha=c_{i}$. It follows from the last paragraph that $c_{x}\in I_{l}$ for all $i\leq l\leq x$ in this case. Now if $c_{x}\notin I_{i}$, $c_{x}\notin I_{l}$ for any $l<i$. Let $I_{j}$ be the leftmost interval for which $c_{x}\in I_{j}$. Then $i<j<x$. Take $\alpha=c_{j}$ in this case. Now if no such $I_{j}$ exists between $I_{i}$ and $I_{x}$ i.e; if $c_{x}\notin I_{j}$ for all $i\leq j<x$ then $c_{x}\notin I_{l}$ for any $l<x$. Take $\alpha=b_{l}$ in this case where $b_{l}$ is the rightmost endpoint for which $c_{l}\in I_{x},c_{x}\notin I_{l}$. Clearly $i\leq l<x$. We adjust the portion $[a_{x},\infty)$ by shrinking or expanding $[a_{x},b_{x}]$ to $[\alpha,\alpha+l]$ and scaling and shifting $[b_{x},\infty)$ to $[\alpha+l,\infty)$. Iterating this operation produces the unit central MPTG representation.

\noindent Now it is sufficient to show adjusting $I_{x}$ in above way does not affect the adjacency
of vertex $x$ with previous intervals. When $\alpha=a_{x}$, then $\alpha=a_{x}>c_{l}$ for all $l<x$. Hence $c_{l}\notin I_{x}$ after adjustment. When $\alpha=c_{i}$, then $c_{i}=\alpha\in I_{x}$ and $c_{x}=\alpha+\dfrac{l}{2}=c_{i}+\dfrac{l}{2}=b_{i}\in I_{i}$. Moreover for all $i<l<x$, $c_{x}\in I_{l}$ and $\alpha=c_{i}<c_{l}<b_{i}=\alpha+\dfrac{l}{2}$ (i.e; $c_{l}\in I_{x}$) and $c_{x}\notin I_{l}$ for all $l<i$ after adjustment. When $\alpha=c_{j}$ then the arguments are similar as above.
Again when $\alpha=b_{l}$, then $c_{x}=\alpha+\dfrac{l}{2}=b_{l}+\dfrac{l}{2}>b_{l}$ which imply $c_{x}\notin I_{l}$. Hence $c_{x}\notin I_{k}$ for all $k<l$.

\vspace{.5em}\noindent
Conversely, if $G$ is a unit central MPTG then all intervals associated to the vertices of $G$ must be of the same length. Thus none of them contains other properly and so $G$ is a proper central MPTG with the same interval representation.

\vspace{.3em}
\noindent
$\mathbf{(3\Rightarrow 1):}$ Let $G=(V,E)$ be a proper interval graph. So the reduced graph $\hat{G}=(\hat{V},\hat{E})$ is an induced subgraph of $G(n,r)=(V_{n},E^{'})$ for some $n,r\in\mathbb{N}$ with $n>r$, where $V_{n}=\{v_{1},v_{2},\hdots,v_{n}\}$ and $v_{i}\leftrightarrow v_{j}$ if and only if 
$|i-j|\leq r$ by condition $4$ of Theorem \ref{proper1}. Let $\hat{V}=\{v_{i_{1}},v_{i_{2}},\hdots,v_{i_{m}}\}\subseteq V_{n}$. Now for each $u\in V$, define $p_{u}=i_{j}$ if $u$ is a copy of $v_{i_{j}}$ and $I_{u}=[p_{u}-r,p_{u}+r]$. Firstly all intervals $I_{u}$ are of same length $2r$ and so none of them properly contains other.

\noindent Next let $u,v\in V$. Suppose $p_{u}=i_{j}$ and $p_{v}=i_{k}$. Then $u$ is a copy of $v_{i_{j}}$ and $v$ is a copy of $v_{i_{k}}$. If $uv\in E$, then $v_{i_{j}}v_{i_{k}}\in \hat{E}\subseteq E^{'}$. Therefore $|i_{j}-i_{k}|\leq r \Rightarrow |p_{u}-p_{v}|\leq r \Rightarrow p_{v} \in I_{u}$ and $p_{u}\in I_{v} \Rightarrow p_{u},p_{v}\in I_{u}\cap I_{v}$. Finally, let $uv\notin E$. Then $v_{i_{j}}v_{i_{k}}\notin \hat{E}$. Since $\hat{G}$ is an induced subgraph of $G(n,r)$, we have $v_{i_{j}}v_{i_{k}}\notin E^{'}$. Then $|i_{j}-i_{k}|>r \Rightarrow |p_{u}-p_{v}|>r \Rightarrow p_{v}\notin I_{u}$ and $p_{u}\notin I_{v}$. Thus $G$ is a proper central MPTG.

\vspace{.3em}

\noindent 
$\mathbf{(1\Rightarrow 3):}$ let $G=(V,E)$ be a proper central MPTG with a proper central MPTG representation $(I_{u},c_{u})$ where $I_{u}=[l_{u},r_{u}]$, $c_{u}$ be the center point of $I_{u}$ for each $u\in V$. We arrange vertices according to the increasing order of center points, $V=\{v_{1},v_{2},\hdots,v_{n}\}$. To prove that G is a proper interval graph we show that vertices of $G$ satisfy condition $3$ of Theorem \ref{proper1} with respect to the above ordering. 

\noindent Denote $I_{u_{i}}=[l_{u_{i}},r_{u_{i}}]$ by $[l_{i},r_{i}]$ and $c_{i}=\dfrac{l_{i}+r_{i}}{2}$ for $i=1,2,\hdots,n$. Let $i<j<k$ and $u_{i}u_{k}\in E$. Then $c_{i}<c_{j}<c_{k}$. Now since G is a central MPTG, $c_{k}-c_{i}\leq \text{min}\{c_{i}-l_{i},c_{k}-l_{k}\}$. Now $c_{j}-c_{i}<c_{k}-c_{i}\leq c_{i}-l_{i}$. Now if $l_{j}>c_{i}$, then $l_{k}\leq c_{i}<l_{j}<c_{j}<c_{k}$ as $c_{k}-c_{i}\leq c_{k}-l_{k}$. So $[l_{j},c_{j}] \subsetneqq [l_{k},c_{k}]$. But this implies $[l_{j},r_{j}]\subsetneqq [l_{k},r_{k}]$ which contradicts the fact that G is a proper central MPTG. Thus $l_{j}\leq c_{i}$. So we have $c_{j}-c_{i}\leq c_{j}-l_{j}$. Hence $c_{j}-c_{i}\leq \text{min} \{c_{i}-l_{i},c_{j}-l_{j}\}$. Therefore $u_{i}u_{j}\in E$, as required. Similarly, it can be shown that $u_{j}u_{k} \in E$. Thus $G$ is a proper interval graph.
\end{proof}

\noindent Next following observation \ref{obsmptg} we have the following in a similar way:

\begin{prop}
Let $G$ be a simple undirected graph. Then $G$ is a central MPTG if and only if there exists a
central interval catch digraph $D$ such that $G = D\cap D^T$ where $D^T$ is the digraph obtained from D by reversing
direction of every arc.
\end{prop}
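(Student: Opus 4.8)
The plan is to follow the equivalence $(1)\Leftrightarrow(5)$ of Observation~\ref{obsmptg}, but now carrying the ``central'' hypothesis through on both sides. The guiding idea is that a CMPTG representation and a CICD representation attach to each vertex $u$ exactly the same data---an interval $I_u$ together with its midpoint $c_u$---and differ only in the rule by which adjacency is read off. The initial Observation of this section, which records that $\{c_u,c_v\}\subseteq I_u\cap I_v$ is equivalent to the pair of containments $c_v\in I_u$ and $c_u\in I_v$, is exactly the bridge between the two rules.

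For the forward implication, I would suppose $G=(V,E)$ is a CMPTG with a representation $\{I_u : u\in V\}$ and midpoints $c_u$. First I define a digraph $D$ on $V$ by declaring $uv$ to be an arc whenever $u\neq v$ and $c_v\in I_u$. Keeping the same intervals $I_u$ with their centers $c_u$ as the associated points, $D$ is by definition a central interval catch digraph. Next I verify $G=D\cap D^T$: an undirected edge $uv$ belongs to $D\cap D^T$ exactly when both $uv$ and $vu$ are arcs of $D$, i.e.\ when $c_v\in I_u$ and $c_u\in I_v$; by the initial Observation this is the same as $\{c_u,c_v\}\subseteq I_u\cap I_v$, which is precisely the CMPTG adjacency condition. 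Hence $E(D\cap D^T)=E$.

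For the converse, given a CICD $D$ with a central representation $\{I_u\}$ (midpoints $c_u$) satisfying $G=D\cap D^T$, I would reuse the very same intervals as a CMPTG representation of $G$. For $u\neq v$ one has $uv\in E$ iff $uv$ is an arc of both $D$ and $D^T$, iff $c_v\in I_u$ and $c_u\in I_v$, iff $\{c_u,c_v\}\subseteq I_u\cap I_v$, which is the CMPTG condition; so $\{I_u\}$ witnesses that $G$ is a CMPTG.

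The only point requiring any care---and it is not really an obstacle---is that the centrality condition transfers verbatim between the two structures: being the midpoint of $I_u$ is an intrinsic property of the interval alone, so the single family $\{(I_u,c_u)\}$ simultaneously certifies the CICD and the CMPTG structure. All the genuine content lies in matching the two adjacency rules, and the Observation opening this section already packages that matching, so the argument reduces to the two short verifications above.
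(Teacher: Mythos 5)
Your proposal is correct and follows exactly the route the paper intends: the paper merely asserts the proposition ``following Observation \ref{obsmptg} \ldots in a similar way,'' i.e.\ that it follows from the definitions of CMPTG and CICD, and your argument simply writes out those definitional verifications, using the same bridge (the opening Observation of Section~3 identifying $\{c_u,c_v\}\subseteq I_u\cap I_v$ with the two containments $c_v\in I_u$ and $c_u\in I_v$). No gap; you have supplied the details the paper leaves implicit.
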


\noindent It is proved in \cite{Catanzaro} that if $G$ is an MPTG with non-adjacent vertices $u$ and $v$, then $G[N(u)\cap N(v)]$ is an interval graph. We found the following analogous result for central MPTG.
\begin{figure}
\begin{center}
\begin{tikzpicture}
\draw (0,-.5)--(0,0.5);
\draw (0,-.5)--(1,0-.5);
\draw (0,-0.5)--(-1,-0.5);
\draw (0,0.5)--(-1,-0.5);
\draw (0,0.5)--(1,-0.5);
\draw (0,-0.5)--(-0.5,-1.5);
\draw (-1,-0.5)--(-0.5,-1.5);
\draw(-1,-0.5)--(0.5,-1.5);
\draw (0,-0.5)--(0.5,-1.5);
\draw (1,-0.5)--(-0.5,-1.5);
\draw (0.5,-1.5)--(1,-0.5);

\draw  [fill=black](0,-0.5) circle [radius=0.05];
\draw  [fill=black](1,-0.5) circle [radius=0.05];
\draw  [fill=black](0,0.5) circle [radius=0.05];
\draw  [fill=black](-1,-0.5) circle [radius=0.05];
\draw  [fill=black](-0.5,-1.5) circle [radius=0.05];
\draw  [fill=black](0.5,-1.5) circle [radius=0.05];

\node [left] at (-1,-0.5) {$u$};
\node [right] at (1,-0.5) {$v$};
%\node[left] at (2,-2.5) {\footnotesize{Figure 3: The graph $G$ in Example \ref{notcmptg} }};
%\node[left] at (2,-2.5);
\end{tikzpicture}

\caption{The graph $G$ in Example \ref{notcmptg}}\label{figex311}
\end{center}
\end{figure}
\begin{prop}
If $G$ is a central MPTG with non-adjacent vertices $u$ and $v$, then $G[N(u)\cap N(v)]$ is a proper interval graph.\label{p1}
\end{prop}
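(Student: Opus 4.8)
The plan is to deduce the result from the analogous interval-graph statement for MPTG together with the claw-free characterisation of proper interval graphs. First I would observe that a CMPTG is in particular an MPTG (take the distinguished point $p_x$ of each vertex to be the centre $c_x$ of $I_x$), so the cited result of \cite{Catanzaro} applies and $G[N(u)\cap N(v)]$ is an interval graph. Since a graph is a proper interval graph exactly when it is a $K_{1,3}$-free interval graph (Roberts; cf.\ the references in Theorem \ref{proper1}), it then suffices to prove that $G[N(u)\cap N(v)]$ contains no induced claw. In fact I would prove the stronger statement that $W:=N(u)\cap N(v)$ contains no independent set of three vertices, which immediately rules out a claw.

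For the quantitative part I would work throughout with the centre/length description: writing $\rho_x=h_x/2$, the Observation gives $xy\in E \iff |c_x-c_y|\le \min\{\rho_x,\rho_y\}$. Assume $c_u<c_v$. Non-adjacency of $u,v$ means $c_v-c_u>\tfrac12\min\{h_u,h_v\}$, and reflecting the line if necessary I may assume the minimum is attained at $u$, so that $c_v>r_u$ and hence $\rho_u<c_v-c_u$. The two memberships defining $x\in W$ then read, for every $x\in W$: from $x\sim u$ we get $\ell_u\le c_x$ and $\rho_x\ge c_x-c_u$; from $x\sim v$ we get $\rho_x\ge c_v-c_x$; and $c_x\le r_u<c_v$.

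The heart of the argument is the following: for any non-adjacent pair $x,y\in W$ with $c_x<c_y$, the smaller centre must satisfy $c_x<c_u$. Indeed $|c_y-c_x|>\min\{\rho_x,\rho_y\}$; if this minimum were $\rho_x$ then $c_y-c_x>\rho_x\ge c_v-c_x$ would force $c_y>c_v$, impossible; hence the minimum is $\rho_y$ and $c_y-c_x>\rho_y\ge c_y-c_u$ gives $c_x<c_u$. Now suppose $b_1,b_2,b_3\in W$ are pairwise non-adjacent with $c_{b_1}<c_{b_2}<c_{b_3}$. Applying this to the pair $(b_2,b_3)$ yields $c_{b_2}<c_u$, and applying it to $(b_1,b_2)$ (where the minimum radius is $\rho_{b_2}\ge c_v-c_{b_2}$) yields $c_{b_2}-c_{b_1}>c_v-c_{b_2}$, i.e.\ $c_{b_1}<2c_{b_2}-c_v<2c_u-c_v$. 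But $\ell_u\le c_{b_1}$ gives $c_{b_1}\ge c_u-\rho_u>c_u-(c_v-c_u)=2c_u-c_v$, a contradiction. Hence $\alpha(G[W])\le 2$, so $G[W]$ is claw-free, and being an interval graph it is a proper interval graph.

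The step I expect to be the main obstacle is exactly this independence bound: setting up the centre inequalities so that non-adjacency of a pair of common neighbours is forced to push the smaller centre strictly to the left of $c_u$, and then chaining two such pairs against the single constraint $\rho_u<c_v-c_u$ coming from $u\not\sim v$. Everything else is bookkeeping, and the symmetric configuration ($c_u<\ell_v$) is already handled by the reflection reduction made above.
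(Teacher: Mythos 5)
Your proof is correct, but it takes a genuinely different route from the paper's. The paper argues directly: it partitions $W=N(u)\cap N(v)$ into the vertices lying before $u$, between $u$ and $v$, and after $v$ in the $C$-order, shows each block is a clique and that the outer two blocks cannot both be nonempty, and then exhibits an explicit ordering of $W$ (the before-$u$ block in $C$-order followed by the middle block ordered by left endpoints, or symmetrically on the other side) for which it verifies the consecutive closed-neighbourhood condition of Theorem \ref{proper1}. You instead quote the result of Catanzaro et al.\ that $G[W]$ is an interval graph for any MPTG, reduce ``proper'' to ``claw-free'' via Roberts' characterization of indifference graphs, and prove the stronger fact $\alpha(G[W])\leq 2$ by a clean chain of centre--radius inequalities: your key lemma that a non-adjacent pair in $W$ forces its smaller centre strictly left of $c_u$, applied twice and played against $\rho_u<c_v-c_u$, is sound (I checked the case analysis on which radius attains the minimum, and the final squeeze $2c_u-c_v<c_{b_1}<2c_{b_2}-c_v<2c_u-c_v$). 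Your argument is shorter and structurally more transparent, and as a by-product it shows $W$ has no independent triple, which is slightly more information than claw-freeness; the trade-offs are that you lean on two external results --- the interval-graph lemma from \cite{Catanzaro} and the $K_{1,3}$-free characterization, which is classical (Roberts) but is not among the four conditions actually listed in Theorem \ref{proper1} --- whereas the paper's verification is self-contained and, by producing the ordering explicitly, also yields the clique decomposition of $W$ that the authors reuse to build the forbidden graph of Example \ref{notcmptg}.
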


\begin{proof}
Let $G=(V,E)$ be a central MPTG with an interval representation $\Set{I_{i}=[a_{i},b_{i}]}{i\in V}$ where vertices are arranged according to $C$-order. Let $c_{i}$ be the center point of $I_{i}$. Suppose $u<v$. Then vertices of $G[N(u)\cap N(v)]$ which occur between $u,v$ form a clique from (\ref{4p1}). Again if there exist a vertex of $G[N(u)\cap N(v)]$ occurs before $u$ then no vertex can occur after $v$ which belongs to $G[N(u)\cap N(v)]$ and conversely follows from (\ref{4p1}) and the fact $uv\notin E$. Moreover vertices of $G[N(u)\cap N(v)]$ that occur before $u$ form a clique. Let $x<y<u<v$ such that $x,y\in G[N(u)\cap N(v)]$ then $c_{x}<c_{y}<c_{u}<c_{v}\leq b_{x}$ as $vx\in E$. This implies $c_{y}\in[c_{x},b_{x}]\subset I_{x}$. Now if $a_{y}>c_{x}$, then $a_{u}\leq c_{x}<a_{y}<c_{y}<c_{u}$ (as $ux\in E$) which implies $c_{y}-a_{y}<c_{u}-a_{u}$. Hence $|I_{y}|<|I_{u}|$. Again as $a_{y},c_{y}\in[a_{u},c_{u}]$ from above we can conclude that $b_{y}\leq b_{u}$. But $yv\in E$ implies $a_{v}<c_{y}<c_{u}<c_{v}\leq b_{y}$ which imply $c_{u}\in[a_{v},c_{v}]\subset I_{v}$. Also as $u$ is not adjacent to $v$, $c_{v}>b_{u}$. Hence from above we get $b_{u}<c_{v}\leq b_{y}$ which is a contradiction. Therefore $a_{y}\leq c_{x}$. So $c_{x}\in [a_{y},c_{y}]\subset I_{y}$. Hence $xy\in E$. Similarly one can show vertices of $G[N(u)\cap N(v)]$ which occur after $v$ form a clique.

\vspace{0.5 em}\noindent 
Now let $\{u_{i}|u_{i}<u\}$ be the vertices of $G[N(u)\cap N(v)]$ arranged in $C$-order and $\{x_{j}|u<x_{j}<v\}$ be the vertices of $G[N(u)\cap N(v)]$ arranged according to increasing order of left end points. From above observations it is clear that $u_{i}\left[x_{j}\right]$ 's form clique for $u_{i}<u$ $\left[u<x_{j}<v\right]$. Let $u_{k}$ $\left[x_{m}\right]$ be the last vertices occurred before $u$ $\left[ \textrm{between} \ u \ \textrm{and}\  v\right]$ respectively. Now we show that $G[N(u)\cap N(v)]$ becomes a proper interval graph with respect to the ordering $\{u_{1},\hdots,u_{k},x_{1},\hdots,x_{m}\}$. In this ordering by $p \prec q$ we mean $p$ occurs before $q$. Infact we will show that the vertices satisfy condition $3$ of Theorem \ref{proper1} with respect to the ordering $\prec$. Let $u_{l}\prec x_{i} \prec x_{j}$ where $1\leq l \leq k,1\leq i,j\leq m $ such that $u_{l}x_{j}\in E$. Then $c_{u_l}<c_{u}<c_{x_{i}},c_{x_{j}}<c_{v}<b_{u_l}$ as $vu_{l}\in E$. This implies $c_{x_{i}}\in[c_{u_l},b_{u_l}]\subset I_{u_l}$. Now as $u_{l}x_{j}\in E$, $a_{x_{j}}\leq c_{u_l}<c_{u}<c_{x_{i}}$ implies $a_{x_{i}}<a_{x_{j}}\leq c_{u_l}<c_{x_{i}}$ (as $x_{i}\prec x_{j}\Longleftrightarrow a_{x_{i}}<a_{x_{j}}$) which imply $c_{u_l}\in [a_{x_{i}},c_{x_{i}}]\subset I_{x_{i}}$. Hence $u_{l}x_{i}\in E$. Let $u_{i}\prec u_{j}\prec x_{l}$ where $1\leq i,j\leq k,1\leq l\leq m$ such that $u_{i}x_{l}\in E$. Then $u_{i}\prec u_{j}\prec x_{l}\prec v$ clearly. Now from  (\ref{4p1}) one can conclude $u_{j}x_{l}\in E$. Similarly one can show if there exists vertices of $G[N(u)\cap N(v)]$ that occurs after $v$, then with respect to the ordering $\{x_{1},\hdots,x_{m},v_{1},\hdots,v_{k}\}$ (use $p\prec^{'}q$ if and only if $p$ occurs before $q$ in this ordering) $G[N(u)\cap N(v)]$ forms a proper interval graph where $\{x_{i}|u<x_{i}<v\}$ are vertices of $G[N(u)\cap N(v)]$ arranged according to increasing order of right end points, and $\{v_{j}|v<v_{j}\}$ are vertices of $G[N(u)\cap N(v)]$ arranged in $C$-order.
\end{proof}

\noindent
The above proposition leads to a construction of the following forbidden graph for the class of central MPTG.

\begin{exmp}\label{notcmptg}
By Proposition \ref{p1} we see that the graph $G$ (see Figure \ref{figex311}) formed by taking $K_{1,3}$ together with two non-adjacent vertices (say, $u,v$) which are adjacent to each vertex of $K_{1,3}$ is not a central MPTG.
\end{exmp}

\section{$50\%$ max-tolerance graphs}

\noindent In this section we introduce a new type of max-tolerance graph, namely {\em $50\%$ max-tolerance graph} similar to the concept of $50\%$ min-tolerance graph \cite{Bogart}.

\begin{defn}\label{c5}
A max-tolerance graph $G=(V,E)$ is a {\em $50\%$ max-tolerance graph} if $t_{u}=\dfrac{|I_{u}|}{2}$ for all $u\in V$ where $t_{u}$ denotes the tolerance associated with the vertex $u\in V$.
\end{defn}

\noindent  We know that classes of unit (min) tolerance graphs and $50\%$ (min) tolerance graphs are same \cite{Bogart}. So the very natural question which comes to our mind that whether unit and $50\%$ max-tolerance graphs denote the same graph class. We settle down this question in the following theorem.

\begin{thm}\label{com}
Central MPTG (i.e., unit max-tolerance graphs) and the class of $50\%$ max-tolerance graphs are not comparable.
\end{thm}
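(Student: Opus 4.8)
```latex
\textbf{Proof proposal.}
The claim is that neither class contains the other, so the plan is to exhibit two separating examples: a graph that is a CMPTG (equivalently a UMTG) but not a $50\%$ max-tolerance graph, and a graph that is a $50\%$ max-tolerance graph but not a CMPTG. Since Theorem~\ref{pi1} identifies the proper interval graphs with the unit/proper CMPTG, the first witness should be a CMPTG that is \emph{not} proper --- an interval graph in which some interval properly contains another will be a CMPTG (interval graphs embed into CMPTG by the earlier theorem) yet I expect it to fail the rigid $t_u=|I_u|/2$ constraint. A natural first candidate is a small graph such as $K_{1,3}$ or a short path/star together with a containment; I would try to show directly that no assignment of intervals with $t_u=|I_u|/2$ realizes its edge set, arguing by the consecutive-ordering characterization of MPTG (Theorem~\ref{mptg1}) combined with the center-point arithmetic of the opening Observation.

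For the reverse separation I would lean on the abstract's promise that every chordless cycle $C_n$ is a $50\%$ max-tolerance graph. Taking $n$ large (say $C_n$ with $n\geq 5$, or some cycle known not to be a UMTG), I would invoke the necessary condition of Theorem~\ref{4pt}: any CMPTG admits a vertex ordering in which $x<u<v<y$ with $xv,uy\in E$ forces both $uv\in E$ \emph{and} $(xu\in E$ or $vy\in E)$. The strategy is to show that some $50\%$ max-tolerance graph --- most cleanly a chordless cycle --- violates \eqref{cmptg1} under every vertex ordering, hence cannot be a CMPTG. Corollary~\ref{c6}, which says all induced $C_4$ are circularly-consecutive ordered and all induced $P_4$ have vertex-consecutive ending edges, gives a concrete combinatorial obstruction I can check against a candidate cycle.

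\textbf{Main obstacle.} The hard direction is the second one: to assert that a specific $C_n$ is a $50\%$ max-tolerance graph I must actually produce a $50\%$ representation (intervals with $t_u=|I_u|/2$ realizing exactly the cycle edges), and this is a genuine construction rather than an appeal to an earlier result, since the proof that chordless cycles lie in the $50\%$ class has not yet appeared in the excerpt. I would therefore either forward-reference that construction or build an explicit representation for one fixed cycle, choosing interval lengths and positions so that consecutive vertices have half-length overlaps while non-consecutive ones fall just short. The forbidden-subgraph example of Figure~\ref{fignotcmpt} (the $K_{1,3}$ with two non-adjacent common neighbors, shown not to be a CMPTG) is also worth testing as an alternative reverse witness, provided it can be shown to admit a $50\%$ representation. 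Once both explicit witnesses are verified against the center-point inequalities and Theorem~\ref{4pt}, incomparability follows immediately.
```
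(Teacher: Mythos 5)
Your overall strategy (two separating witnesses) is the right one and matches the paper's, but both of your primary candidate witnesses fail, and the failures are not repairable by minor adjustments.

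For the direction ``CMPTG but not $50\%$ max-tolerance,'' $K_{1,3}$ cannot work: it \emph{is} a $50\%$ max-tolerance graph (the paper exhibits an explicit representation in the conclusion, with interval $[1.9,6.1]$ for the centre and $[0,8],[1.8,4.3],[3.6,5.9]$ for the pendants). The paper's witness is $K_{1,n}$ for $n\geqslant 8$, and the bound $8$ is essential: the proof partitions the pendant intervals into four classes according to their position relative to the (normalized) central interval $[0,1]$ and shows by a delicate chain of inequalities that each class holds at most $1,2,2,2$ pendants respectively, so at most $7$ pendants are realizable. A small star or a containment pair will not produce a contradiction, and nothing in Theorem~\ref{mptg1} or the opening Observation substitutes for this counting argument.

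For the reverse direction, a chordless cycle cannot separate either: every $C_n$ is a CMPTG (cycles are outerplanar, and the quoted theorem of Soto--Caro places all outerplanar graphs inside CMPTG; the paper's own remark after Theorem~\ref{c60} explicitly puts $C_n$ in the \emph{intersection} of the two classes). So Theorem~\ref{4pt} will never yield the obstruction you hope for on a cycle. The paper's witness is $\overline{C_6}$: it gives an explicit $50\%$ representation and then uses Corollary~\ref{c6} on the induced $C_4$ obtained by deleting two antipodal vertices to force a circularly consecutive centre ordering, from which a chain of interval inequalities produces a contradiction. Your fallback candidate (the $K_{1,3}$ with two nonadjacent common neighbours from Example~\ref{notcmptg}) is at least genuinely not a CMPTG, but you leave its membership in the $50\%$ class unverified, so as written neither direction of the incomparability is established.
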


\begin{proof}
The graph $K_{1,n}$ is an interval graph for any natural number $n$. So by Theorem \ref{sp} it is a central MPTG and hence a unit max-tolerance graph as well. In Lemma \ref{px4} we will show $K_{1,n}, n\geq 8$ is not a $50\%$ max-tolerance graph. Again from Theorem \ref{c61} one can verify that $\overline{C_{6}}$ is a $50\%$ max-tolerance graph but it is not a central MPTG. Thus $K_{1,n},n\geq 8$ and $\overline{C_{6}}$ separate unit max-tolerance graph from $50\%$-max-tolerance graph. Hence these graph classes become incomparable.    
\end{proof}

\noindent We do the proof verification of the above theorem with the help of following Lemma.

\begin{lem} \label{px4}
\noindent $K_{1,n}$ where $n\geq 8$ is not a $50\%$ max-tolerance graph.
\end{lem}

\begin{proof}
\noindent
On the contrary lets assume that $K_{1,n}, n\geq 8$ has a $50\%$ max-tolerance representation. By suitable scaling and shifting origin without loss of generality, we may assume that $I_{u}=[0,1]$, interval corresponding to central vertex $u$ and $I_{v_{i}}=[a_{i},b_{i}]$, intervals corresponding to the pendant vertices $v_{i}$ where $i\in\{1,\hdots,n\}$.

\noindent We consider the intervals corresponding to any two pendants must be distinct as they are non-adjacent.
Let $[a,b]$ be an interval corresponding to a pendant vertex $I_{v}$.  Then $I_{v}$ must belong to one of the four sets, $S_{1}=\{I_{v}|a\leq 0<1\leq b\}$, $S_{2}=\{I_{v}|a<0,0<b< 1\}$, $S_{3}=\{I_{v}|0<a<1,b>1\}$, $S_{4}=\{I_{v}|0\leq a< b\leq 1\}$. We note that any pendant vertex cannot have $[0,1]$ as its interval representation. Hence the above sets are mutually exclusive. As every pendant vertex is adjacent to the central vertex $u$ hence it follows that the four sets are also exhaustive. 
Now we show that
\begin{eqnarray} \label{50}
|S_{1}|\leq 1 \hspace*{.7em}\text{and} \hspace*{.7em} |S_{i}|\leq 2 \hspace*{.7em} \text{for} \hspace*{.7em} i\in\{2,3,4\}.
\end{eqnarray}

\begin{itemize}
\item $|S_{1}|\leq 1$.

\vspace{.1em}\noindent
Let $[a_{1},b_{1}],[a_{2},b_{2}]$ be the two intervals associated to two such pendant vertices
(say $v_{1},v_{2}$). Then the  $[a_{i},b_{i}] \supseteq [0,1]$ for $i=\{1,2\}.$ Now $[a_{i},b_{i}]\cap[0,1]=[0,1]$ clearly. As every pendant vertex is adjacent to $u$ and $I_{u}$ is of unit length, so each pendant vertex has tolerance at most $1$ and therefore has length at most $2$. But $[a_{1},b_{1}]\cap [a_{2},b_{2}]\supseteq [0,1]\Rightarrow |[a_{1},b_{1}]\cap [a_{2},b_{2}]|\geq 1$. Hence $v_{1}\leftrightarrow v_{2}$ which contradicts the fact that they are pendant vertices.

\item $|S_{2}|\leq 2$.

\vspace{.1em}
\noindent
\noindent Let $[a_{1},b_{1}],[a_{2},b_{2}]$ and $[a_{3},b_{3}]$ be three such intervals corresponding to the vertices (say $v_{1},v_{2},v_{3}$). First we will show that for any two of the three intervals, one must contain the other. On contrary, without loss of generality assume that $[a_{1},b_{1}],[a_{2},b_{2}]$ are two such intervals which are not containing each other and let $a_{1}<a_{2}$, $b_{1}<b_{2}$. It is sufficient to show for one case. Now let
$[a_{1},b_{1}]\cap [a_{2},b_{2}]=[a_{2},b_{1}]$. Now as $v_{1}\leftrightarrow u$, $|[a_{1},b_{1}]\cap[0,1]|\geq \text{max} \{\dfrac{1}{2},\dfrac{b_{1}-a_{1}}{2}\} \Rightarrow b_{1}\geq \dfrac{1}{2},\dfrac{b_{1}-a_{1}}{2}.$ This implies \begin{eqnarray}
\label{px1}
b_{1}\geq -a_{1}
\end{eqnarray}
Similarly $b_{2}\geq -a_{2}$.

\noindent Now $-a_{1}\leq b_{1}\Rightarrow -a_{1}+b_{1}\leq 2b_{1}\Rightarrow -a_{1}+b_{1}< 2b_{1}-2a_{2}\Rightarrow \dfrac{-a_{1}+b_{1}}{2}< b_{1}-a_{2}$. But $v_{1}\nleftrightarrow v_{2}$. Hence $b_{1}-a_{2}<\dfrac{b_{2}-a_{2}}{2}\Rightarrow 2b_{1}-2a_{2}<b_{2}-a_{2}\Rightarrow 2b_{1}-a_{2}<b_{2}.$ Hence $b_{2}>2b_{1}-a_{2}>2b_{1}\geq 1$ (Since $b_{1}\geq \dfrac{1}{2}$ and $a_{2}<0$) which contradicts the fact $b_{2}<1.$ Hence the three intervals form a well ordered set with inclusion as the ordering. \\
From the statement above without loss of generality we can assume that $[a_{1},b_{1}] \supseteq [a_{2},b_{2}]\supseteq [a_{3},b_{3}]$.
\vspace{.1 cm}
\noindent Now $[a_{1},b_{1}]\cap [a_{2},b_{2}]=[a_{2},b_{2}]$. Therefore $2(b_{2}-a_{2})<b_{1}-a_{1}$. Similarly $b_{2}-a_{2}>2(b_{3}-a_{3}).$ Hence $b_{1}-a_{1}>4(b_{3}-a_{3})>4b_{3}\geq 2$ (since $b_{3}\geq\dfrac{1}{2}$ as $v_{3}\leftrightarrow u$). Now $b_{1}-a_{1}\leq 2b_{1}< 2$ from (\ref{px1}) and the fact $b_{1}<1.$ Hence we are through.

\item $|S_{3}|\leq 2$.

\noindent This proof is same as the last case.

\item $|S_{4}|\leq 2$.

\noindent On the contrary we assume that there exists three such vertices with representations  $[a_{i},b_{i}]\subseteq [0,1]$ for $i\in \{1,2,3\}$.

\begin{enumerate}
\item First we will show that for any two intervals, one must \textbf{not} contain the other.
\vspace{0.1em}

If not, we assume $[a_{i},b_{i}]\subseteq [a_{j},b_{j}]$ for some $i,j\in\{1,2,3\}$. Then  $v_{i}\leftrightarrow u\Rightarrow b_{i}-a_{i}\geq \dfrac{1}{2}$. Hence $|[a_{i},b_{i}]\cap[a_{j},b_{j}]|=|[a_{i},b_{i}]|\geq\dfrac{1}{2}$. Again $v_{i}\nleftrightarrow v_{j}\Rightarrow b_{i}-a_{i}<\dfrac{b_{j}-a_{j}}{2}\leq \dfrac{1}{2}$ which is a contradiction.

\vspace{0.1em}

\item Next we will show that no two intervals are disjoint. 

\vspace{0.1em}

If possible let $[a_{1},b_{1}]$ and $[a_{2},b_{2}]$ be disjoint. Without loss of generality we can assume $a_{1}< a_{2}.$ Hence since they are disjoint $ b_{1}< a_{2}.$ But $b_{1}\geq \dfrac{1}{2}$ as $v_{1}\leftrightarrow u$. Hence $a_{2}>\dfrac{1}{2}$. This implies $b_{2}-a_{2}<\dfrac{1}{2}$ which contradicts $v_{2}\leftrightarrow u$.
\end{enumerate}

\noindent Now without loss of generality we can assume $a_{1}<a_{2}<a_{3}.$ Hence from $1$ and $2$ we conclude $a_{1}<a_{2}<a_{3}\leq b_{1}< b_{2}< b_{3}$. Under this situation we will show that there exist no choice of three such intervals. For this we first show $b_{2}>\dfrac{3}{4}$.
If $a_{2}\leq \dfrac{a_{1}+b_{1}}{2}$ then $ b_{1}-a_{2}\geq b_{1}-\dfrac{a_{1}+b_{1}}{2}=\dfrac{b_{1}-a_{1}}{2}.$ Then $b_{1}-a_{2}<\dfrac{b_{2}-a_{2}}{2}$ since $v_{1}\nleftrightarrow v_{2}$.
Hence $b_{2}>2b_{1}-a_{2}\geq 2b_{1}-\dfrac{a_{1}+b_{1}}{2}=b_{1}+\dfrac{b_{1}-a_{1}}{2}\geq \dfrac{1}{2}+\dfrac{1/2}{2}=\dfrac{3}{4}$ (Since $v_{1}\leftrightarrow u$, $b_{1}-a_{1}\geq \dfrac{1}{2}$ and hence $b_{1}\geq \dfrac{1}{2}$).
For other case ,i.e., $a_{2}> \dfrac{a_{1}+b_{1}}{2}=\dfrac{b_{1}-a_{1}}{2}+a_{1}\geq \dfrac{b_{1}-a_{1}}{2}\geq \dfrac{1/2}{2}=\dfrac{1}{4}.$ Now $b_{2}-a_{2}\geq \dfrac{1}{2}\Rightarrow b_{2}\geq a_{2}+\dfrac{1}{2}>\dfrac{1}{4}+\dfrac{1}{2}=\dfrac{3}{4}\Rightarrow b_{2}>\dfrac{3}{4}.$ 

\vspace{0.6em}

\noindent For the remaining of the proof we split into two cases.

\begin{enumerate}
\item If $a_{3}\leq \dfrac{a_{2}+b_{2}}{2}$ then $b_{2}-a_{3}\geq b_{2}-\dfrac{a_{2}+b_{2}}{2}=\dfrac{b_{2}-a_{2}}{2}.$ Then $b_{2}-a_{3}<\dfrac{b_{3}-a_{3}}{2}$ as $v_{2}\nleftrightarrow v_{3}.$ This implies $b_{3}>2b_{2}-a_{3}\geq 2b_{2}-\dfrac{a_{2}+b_{2}}{2}=b_{2}+\dfrac{b_{2}-a_{2}}{2}> \dfrac{3}{4}+\dfrac{1/2}{2}=1.$ Hence we arrive at a contradiction.

\item If $a_{3}>\dfrac{a_{2}+b_{2}}{2}$ then since $b_{3}-a_{3}\geq \dfrac{1}{2}$ (as $v_{3}\leftrightarrow u$), we have

\begin{eqnarray}
\label{px2}
 b_{3}> \dfrac{a_{2}+ b_{2}}{2}+ \dfrac{1}{2}
\end{eqnarray}

Now if $a_{2}\leq \dfrac{a_{1}+b_{1}}{2}$ then $b_{2}> 2b_{1}-a_{2}$ as before. Hence $\dfrac{b_{2}+a_{2}}{2}>b_{1}.$  Hence using (\ref{px2}) we get $b_{3}>b_{1}+\dfrac{1}{2}\geq \dfrac{1}{2}+ \dfrac{1}{2}=1$ contradicting $b_{3}\leq 1.$ 

\vspace{0.2em}

Now if $a_{2}> \dfrac{a_{1}+b_{1}}{2}$ then $b_{2}>\dfrac{a_{1}+b_{1}}{2}+\dfrac{1}{2}$ since $b_{2}-a_{2}\geq \dfrac{1}{2}$. Hence $a_{2}+b_{2}>a_{1}+b_{1}+\dfrac{1}{2}.$ Again using (\ref{px2}) we get $b_{3}>\dfrac{a_{1}+b_{1}}{2}+\dfrac{1}{4}+\dfrac{1}{2}\geq \dfrac{1/2}{2}+ \dfrac{1}{4}+ \dfrac{1}{2}=1$ (as $b_{1}\geq \dfrac{1}{2},a_{1}\geq 0$). Hence we again arrive at a contradiction.
\end{enumerate}

\noindent Hence we have established our claim.
\end{itemize}

Hence the maximum number of possible pendant vertices is $1+2+2+2=7.$ So we are done if we take atleast 8 pendant vertices, i.e., $n\geq 8.$
\end{proof}

\noindent
In the following Propositions we will show that the class of proper interval graphs and any cycle of length $n$ both are subclasses of $50\%$ max-tolerance graphs.

\begin{prop}\label{proper2}
 A proper interval graph is a $50\%$ max-tolerance graph.
\end{prop}

\begin{proof}
Let $G=(V,E)$ be a proper interval graph. Now as from Theorem \ref{pi1} it follows that proper interval graphs are same as proper central MPTG, we can take $(I_{u},c_{u})$ to be a proper central MPTG representation of $G$ where $I_{u}=[a_{u},b_{u}]$, $c_{u}$ denotes the center point of $I_{u}$ for each $u\in V$. We will show that $G$ is a $50\%$ max-tolerance graph with respect to the same interval representation. Suppose $uv\in E$. Then $I_{u}\cap I_{v}\supseteq \{c_{u},c_{v}\}$. This implies $|I_{u}\cap I_{v}|\geq \dfrac{|I_{u}|}{2}, \dfrac{|I_{v}|}{2}$ as none of them contains other properly. Hence $|I_{u}\cap I_{v}|\geq \dfrac{1}{2} \text{max}\{|I_{u}|,|I_{v}|\}$ which imply $uv\in E$ in its $50\%$ max-tolerance representation. Again $uv\notin E$ implies $c_{u}\notin I_{v}$ or $c_{v}\notin I_{u}$. If $c_{u}\leq c_{v}$ then $c_{u}\notin I_{v} \Rightarrow a_{v}>c_{u}\Rightarrow |I_{u}\cap I_{v}|<\dfrac{|I_{u}|}{2}$. Also $c_{v}\notin I_{u}\Rightarrow c_{v}>b_{u}\Rightarrow |I_{u}\cap I_{v}|<\dfrac{|I_{v}|}{2}$. From this we can conclude that $|I_{u}\cap I_{v}|<\dfrac{1}{2} \text{max}\{|I_{u}|,|I_{v}|\}$. Hence $uv\notin E$ in its $50\%$ max-tolerance representation.
\end{proof}

\begin{prop}\label{c60}
Any cycle $C_{n},(n\geq 3)$ is a $50\%$ max-tolerance graph.
\end{prop}

\begin{proof}
We prove $C_{n}$ to be a $50\%$ max-tolerance graph by constructing interval representation $\{I_{v_{i}}=[a_{i},b_{i}]|v_{i}\in V\}$ where the vertex set $V$ is labelled clockwise starting in an arbitary vertex.
%Starting in an arbitrary vertex of $C_{n}$ label the vertex set $V$ in circularly consecutive way in clockwise direction until the vertex is reached. 
Let $c_{i}$ be the center point of $I_{v_{i}}$ and tolerance $t_{v_{i}}=\dfrac{|I_{v_{i}}|}{2}$. 
 
\begin{itemize}
\item Let $n=3$. We associate intervals $I_{v_{1}}=I_{v_{2}}=I_{v_{3}}=[1,2]$.
\item Let $n=4$. We associate intervals $I_{v_{1}}=[1,4.6],I_{v_{2}}=[2,4],I_{v_{3}}=[2.9,4.9],I_{v_{4}}=[2.7,6.3]$.
\item Let $n=5$. We associate intervals $I_{v_{1}}=[10,30],I_{v_{2}}=[16,28],I_{v_{3}}=[18,24],I_{v_{4}}=[15,21],I_{v_{5}}=[9,21]$.
\item Let $n\geq 6$. We prove now in two cases considering $n$ even and odd. We define
$$k=\dfrac{n}{2}\ \text{ when } n \text{ is even},\quad \text{and}\quad k=\dfrac{n+1}{2}\ \text{ when }n \text{ is odd.}$$

\noindent When \textbf{\emph{$n$ is even}} we define $I_{v_{1}}=[2-n,n], I_{v_{2}}=[1,n], I_{v_{i}}=[1,1+\dfrac{k-1}{2^{i-4}}]$ for $i\in\{3,\hdots,k\}$, $I_{v_{k+1}}=[1-\dfrac{k-1}{2^{k-3}},1+\dfrac{k-1}{2^{k-3}}]$, $I_{v_{j}}=[1-\dfrac{k-1}{2^{n-j-2}},1]$ for $j\in\{k+2,\hdots,n\}$.

\noindent We verify that the above representation indeed give $50\%$ max-tolerance representation of $C_{n},n\geq 6$.

\noindent\textbf{Claim $1$: \textit{We will show that $v_{1}$ is adjacent to only $v_{2},v_{n}$}}.

\vspace{.2em}\noindent We note $|I_{v_{1}}\cap I_{v_{2}}|=|I_{v_{2}}|=n-1=\dfrac{|I_{v_{1}}|}{2}$. This implies $v_{1}\leftrightarrow v_{2}$. Also since $n\geq 6$, $5-2n<2-n$. Hence $a_{n}<a_{1}$. Also $b_{n}=1<n=b_{1}$. So $|I_{v_{1}}\cap I_{v_{n}}|=|[a_{1},b_{n}]|=|[2-n,1]|=n-1=\dfrac{|I_{v_{1}}|}{2}>n-2=\dfrac{|I_{v_{n}}|}{2}$. This implies $v_{1}\leftrightarrow v_{n}$. Next we  show $v_{1}\nleftrightarrow v_{i}$ for $i\in\{3,\hdots,k\}$. Clearly $2-n<1$ implies $a_{1}<a_{i}$. Also $n-2<(n-1)2^{i-3} $ for $i\geq 3$. Hence
\begin{equation}\label{eq1}
1+\dfrac{k-1}{2^{i-4}}<n
\end{equation}
Therefore $b_{i}<b_{1}$. Hence $|I_{v_{1}}\cap I_{v_{i}}|=|I_{v_{i}}|=
\dfrac{k-1}{2^{i-4}}<n-1 \text{ from } (\ref{eq1})=\dfrac{|I_{v_{1}}|}{2}$. Hence $v_{1}\nleftrightarrow v_{i}$. Analogously $v_{1}\nleftrightarrow v_{j}$ for $j\in\{k+2,\hdots,n-1\}$.
Again $n-2<(n-1)2^{k-2}$ implies
\begin{equation}\label{eq2}
1+\dfrac{k-1}{2^{k-3}}<n
\end{equation}
Hence $a_{k+1}=1-\dfrac{k-1}{2^{k-3}}>2-n=a_{1}.$
Similarly, $b_{k+1}= 1+\dfrac{k-1}{2^{k-3}}< n=b_{1}.$ Hence
$|I_{v_{1}}\cap I_{v_{k+1}}|=|I_{v_{k+1}}|=
\dfrac{k-1}{2^{k-4}}=\dfrac{n-2}
{2^{k-3}}<n-1=\dfrac{|I_{v_{1}}|}{2}.$ Hence $v_{1}\nleftrightarrow v_{k+1}$.

\vspace{1em}
\noindent \textbf{Claim $2$: \textit{We will show that $v_{2}$ is adjacent to only $v_{1},v_{3}$}}.

 \noindent  From previous case $v_{2}\leftrightarrow v_{1}$. Note that $|I_{v_{2}}\cap I_{v_{3}}|= |[1,n-1]|=n-2>\dfrac{n-1}{2}=\dfrac{|I_{v_{2}}|}{2}, \dfrac{n-2}{2}=\dfrac{|I_{v_{3}}|}{2}$. Hence $v_{2}\leftrightarrow v_{3}$. Next to show $v_{2}\nleftrightarrow v_{i}$ for $i\in\{4,\hdots,k\}$ we note, $a_{2}=a_{i}=1$ and $b_{i}<b_{1}[\text{ from(\ref{eq1}) }]=b_{2}$ for $i\in\{4,\hdots,k\}.$ Hence $|I_{v_{2}}\cap I_{v_{i}}|= |I_{v_{i}}|=\dfrac{k-1}{2^{i-4}}=\dfrac{n-2}{2^{i-3}}<\dfrac{n-1}{2}=\dfrac{|I_{v_{2}}|}{2}.$ This implies $v_{2}\nleftrightarrow v_{i}$ for $i\in\{4,\hdots,k\}.$ Again $|I_{v_{2}}\cap I_{v_{j}}|=\{1\}<\dfrac{n-1}{2}=\dfrac{|I_{v_{2}}|}{2}$. Hence $v_{2}\nleftrightarrow v_{j}$ for $j\in\{k+2,\hdots,n\}.$\\
Now clearly $a_{k+1}<1= a_{2}$ and $b_{k+1}<n=b_{2}$ from $( \ref{eq2} )$. Hence, $|I_{v_{2}}\cap I_{v_{k+1}}|= |[a_{2},b_{k+1}]|= \dfrac{k-1}{2^{k-3}}=\dfrac{n-2}{2^{k-2}}< \dfrac{n-1}{2}=\dfrac{|I_{v_{2}}|}{2}.$
Hence $v_{2}\nleftrightarrow v_{k+1}.$

\vspace{1em}
\noindent \textbf{Claim $3$:\textit{ Vertex $v_{i}$ where $i\in\{3,\hdots,k\}$ is adjacent to only $v_{i-1},v_{i+1}$}}.

\vspace{.1em} \noindent From previous case $v_{3}\leftrightarrow v_{2}$. Let $v_{i},v_{i^{'}}$ be two vertices where $i,i^{'}\in\{3,\hdots,k\}$. Let $i<i^{'}$. Then clearly $b_{i}>b_{i^{'}}$. Hence $|I_{v_{i}}\cap I_{v_{i^{'}}}|=|I_{v_{i^{'}}}|=\dfrac{k-1}{2^{i^{'}-4}}$. Now if $i^{'}=i+1$, then $\dfrac{k-1}{2^{i^{'}-4}}=\dfrac{k-1}{2^{i-3}}=\dfrac{|I_{v_{i}}|}{2}>\dfrac{|I_{v_{i^{'}}}|}{2}$ clearly. Hence $v_{i}\leftrightarrow v_{i^{'}}$ in this case. For $i^{'}>i+1$, $|I_{v_{i}}\cap I_{v_{i^{'}}}|=\dfrac{k-1}{2^{i^{'}-4}}<\dfrac{k-1}{2^{i-3}}=\dfrac{|I_{v_{i}}|}{2}$. Hence $v_{i}\nleftrightarrow v_{i^{'}}$. Also one can check trivially that $a_{k+1}<1=c_{k+1}=a_{i}$ where $i\in\{3,\hdots,k\}$. Moreover $|I_{v_{k}}\cap I_{v_{k+1}}|=|[c_{k+1},b_{k+1}]|=|[1,1+\dfrac{k-1}{2^{k-3}}]|=\dfrac{k-1}{2^{k-3}}=\dfrac{|I_{v_{k}}|}{2}$. Hence $v_{k}\leftrightarrow v_{k+1}$. Again $a_{i}=c_{k+1}=1$ and $b_{k+1}=1+\dfrac{k-1}{2^{k-3}}=c_{k}<b_{k}=1+\dfrac{k-1}{2^{k-4}}<1+\dfrac{k-1}{2^{i-4}}=b_{i}$ for $3\leq i<k$. Hence $|I_{v_{i}}\cap I_{v_{k+1}}|=\dfrac{k-1}{2^{k-3}}<\dfrac{k-1}{2^{i-3}}=\dfrac{|I_{v_{i}}|}{2}$. This shows $v_{i}\nleftrightarrow v_{k+1}$ for $3\leq i<k$. Now $|I_{v_{i}}\cap I_{v_{j}}|=\{1\}$ for $i\in\{3,\hdots,k\},j\in\{k+2,\hdots,n\}$. Hence $v_{i}\nleftrightarrow v_{j}$.

\vspace{1em}
\noindent \textbf{Claim $4$:\textit{ Vertex $v_{k+1}$ is adjacent to only $v_{k},v_{k+2}$}}.

\vspace{1em}
\noindent \textbf{Claim $5$:\textit{ Vertex $v_{i}$ where $i\in\{k+2,\hdots,n\}$ is adjacent to only $v_{i-1},v_{i+1}$}}.

\vspace{.4em}\noindent
Claim $4,5$ can be shown similarly to the previous cases.

\noindent When \textbf{\emph{$n$ is odd}} we define $I_{v_{1}}=[2-n,n],I_{v_{2}}=[1,n], I_{v_{i}}=[1,1+\dfrac{n-2}{2^{i-3}}]$ for $i\in\{3,\hdots,k\}$, $I_{v_{k+1}}=[1-\dfrac{3n-6}{2^{k-1}},1+\dfrac{3n-6}{2^{k-1}}]$,
$I_{v_{j}}=[1-\dfrac{3n-6}{2^{n+1-j}},1]$ for $j\in\{k+2,\hdots,n\}$. 
\end{itemize}
\hspace{1.8 em} \noindent This proof is analogous to the case when $n$ is even. 
\end{proof}

\begin{rmk}
{\em Combining Proposition \ref{proper2} and Proposition \ref{c60} along with results of \cite{Soto} we can easily conclude that proper intervals graphs and $C_{n}$ both belong to the intersection class of central MPTG and $50\%$ max-tolerance graphs}.
\end{rmk}

\section{Central interval catch digraphs}

\noindent
In 1984, Maehera posed the following conjecture:

\noindent \textbf{Maehera's conjecture}\cite{Maehera}: \label{conj1} If a digraph $G$ has no induced subdigraph isomorphic to one of the digraphs in Figure \ref{mhfig} and $A^*(G)$ satisfy the consecutive $1$'s property for rows, then $G$ is a central ICD.

\begin{figure}
	\centering
	
	\includegraphics[width=4.4in]{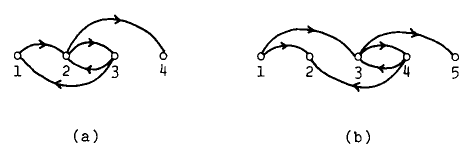}
	\caption{Maehara's forbidden digraphs for central ICD}\label{mhfig}    
\end{figure}

\noindent 
We show the digraphs $G_1,G_2,G_3$ in Example \ref{notcicd} disprove the conjecture. To see this we obtain the following necessary condition of a central ICD.

\begin{prop}\label{lcicd}
Let $G=(V,E)$ be a central ICD. Then there is an ordering of vertices $V=\set{v_1,v_2,\ldots,v_n}$ which satisfies (\ref{icd1}) and the following condition:
\begin{equation}\label{cicd2}
\text{for any }i<j,\text{ either }i_1\leqslant j_1\text{ or, }i_2\leqslant j_2,
\end{equation}
where $i_1$ and $i_2$ be the least and the highest numbers such that $i_1=i$ or, $v_iv_{i_1}\in E$ and $i_2=i$ or, $v_iv_{i_2}\in E$ for each $i=1,2,\ldots,n$. 
\end{prop}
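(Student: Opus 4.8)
The plan is to start from a central interval catch representation of $D$ and adopt the ordering by center points, exactly as in the proof of Theorem~\ref{cicdop}. Fix a representation $\Set{(I_i,p_i)}{i=1,\ldots,n}$ with $I_i=[p_i-r_i,\,p_i+r_i]$, assume (as we may, since only strict comparisons will be used) that the $p_i$ are distinct, and relabel so that $p_1<p_2<\cdots<p_n$; thus $v_iv_j\in E$ iff $p_j\in I_i$, i.e.\ $|p_i-p_j|\leqslant r_i$. This ordering already satisfies (\ref{icd1}): if $i<j<k$ and $v_iv_k\in E$ then $p_j-p_i<p_k-p_i\leqslant r_i$ forces $p_j\in I_i$, so $v_iv_j\in E$, and symmetrically $v_kv_i\in E$ forces $v_kv_j\in E$. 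Hence the only new thing to establish is that this same ordering satisfies (\ref{cicd2}).

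I would prove (\ref{cicd2}) by contradiction. Suppose there are indices $i<j$ with $i_1>j_1$ and $i_2>j_2$, where $i_1,i_2$ (resp.\ $j_1,j_2$) are the least and greatest indices in $\set{i}\cup\Set{k}{v_iv_k\in E}$ (resp.\ for $j$). The key is to produce two witness vertices. Since $j_1<i_1\leqslant i$, the vertex $v_{j_1}$ lies strictly left of $v_i$ and, being below the minimal out-neighbour index $i_1$ of $v_i$, is not an out-neighbour of $v_i$; as $j_1<j$ it is a genuine out-neighbour of $v_j$. From $p_{j_1}\notin I_i$ with $p_{j_1}<p_i$ we get $p_{j_1}<p_i-r_i$, while $p_{j_1}\in I_j$ with $p_{j_1}<p_j$ gives $p_{j_1}\geqslant p_j-r_j$; together these yield $p_j-r_j<p_i-r_i$. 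Symmetrically, $i_2>j_2\geqslant j>i$ makes $v_{i_2}$ a vertex strictly right of $v_j$ that is an out-neighbour of $v_i$ but, being above the maximal index $j_2$ of $v_j$, is not an out-neighbour of $v_j$, which gives $p_j+r_j<p_i+r_i$.

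Adding the two derived inequalities $p_j-r_j<p_i-r_i$ and $p_j+r_j<p_i+r_i$ yields $2p_j<2p_i$, contradicting $p_i<p_j$; this completes the argument. The proof is essentially a short geometric observation, so I expect the only delicate part to be the bookkeeping that converts the index inequalities $i_1>j_1$ and $i_2>j_2$ into the correct strict inequalities between centers and radii — in particular, using distinctness of the $p_i$ and the fact that $v_{j_1}$ (resp.\ $v_{i_2}$) is a \emph{left} (resp.\ \emph{right}) neighbour, so that non-membership in $I_i$ (resp.\ $I_j$) lands on the intended side of the interval. I anticipate no genuine obstacle beyond this careful case-tracking.
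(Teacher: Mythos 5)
Your proposal is correct and follows essentially the same route as the paper: order the vertices by the (distinct) center points, verify (\ref{icd1}) from additivity of distances along the line, and refute a violation of (\ref{cicd2}) using the configuration $j_1<i_1\leqslant i<j\leqslant j_2<i_2$ with the two witnesses $v_{j_1}$ and $v_{i_2}$. The paper packages the final contradiction as a chain of distance inequalities ending in $d(j,i)<0$ (via the optimized-digraph condition), while you sum the two endpoint inequalities $p_j-r_j<p_i-r_i$ and $p_j+r_j<p_i+r_i$ to get $p_j<p_i$; these are the same computation.
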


\begin{proof}
Let $G=(V,E)$ be represented by $\{(I_{i},c_{i})|i\in V\}$ where $I_{i}=[a_{i},b_{i}]$ is an interval and $c_{i}$ be its center point. We arrange vertices of $G$ according to the increasing order of these center points. It is easy to check $G$ satisfy (\ref{icd1}) with respect to this ordering. On contrary lets assume for some $i<j$, $i_{1}>j_{1}$ and $i_{2}>j_{2}$. Then $j_1<i_1\leqslant i<j\leqslant j_2<i_2$. Now $v_{j}v_{j_{1}}\in E, v_{i}v_{j_{1}}\notin E$ imply $c_{j_1}\in I_j$ and $c_{j_1}\notin I_i$. Since $c_{j_1}\notin [a_i,b_i]$, either $c_{j_1}<a_i$ or $c_{j_1}>b_i$. If $c_{j_1}>b_i$, then $c_{i_1}\geq c_{j_1}>b_i$ as $i_1>j_1$. But then $v_iv_{i_1}\notin E$ which is a contradiction. Thus 
$a_{j}\leq c_{j_{1}}<a_{i}\leq c_{i}\leq c_{j}$. Hence $[a_{i},c_{i}]\subset [a_{j},c_{j}]$. Now as $v_{i}v_{i_{2}}\in E$, $c_{i_{2}}\in I_{i}\subset I_j$ which implies $v_{j}v_{i_{2}}\in E$. But then $i_2\leq j_2$ which is again a contradiction. Hence the proof follows.
\end{proof}

\begin{figure}[h]
	\begin{tikzpicture}
	
	\draw (-2,0)--(0,0);
	\draw (-2,0)--(-1,-1);
	\draw (-1,-1)--(0,0);
	\draw (-2,0)--(-1,1);
	\draw (0,0)--(-1,1);
	
	\draw [fill=black](-2,0) circle [radius=0.05];
	\draw [fill=black](0,0) circle [radius=0.05];
	\draw [fill=black](-1,-1) circle [radius=0.05];
	\draw [fill=black](-1,1) circle [radius=0.05];
	
	\node [left] at (-2,0) { $v_{2}$};
	\node [right] at (0,0) { $v_{3}$};
	\node [above] at (-1,1) { $v_{1}$};
	\node [below] at (-1,-1) { $v_{4}$};
	
	\draw [->](-2,0)--(-1,0);
	\draw [->](0,0)--(-1,0);
	\draw [->](-1,1)--(-1.5,.5);
	\draw [->](0,0)--(-.5,.5);
	\draw [->](-2,0)--(-1.5,-0.5);
	\draw[->] (-1,-1)--(-.5,-0.5);

\node[below] at (-1.2,-1.8) {{\footnotesize $G_{1}$ $(a=b=0,c=1)$}};

	\draw (1,0)--(3,0);
	\draw (1,0)--(2,-1);
	\draw (3,0)--(2,-1);
	\draw (3,0)--(2,1);
	\draw (1,0)--(2,1);
	
	\draw [fill=black](1,0) circle [radius=0.05];
	\draw [fill=black](3,0) circle [radius=0.05];
	\draw [fill=black](2,-1) circle [radius=0.05];
	\draw [fill=black](2,1) circle [radius=0.05];
	
	\node [left] at (1,0) { $v_{2}$};
	\node [right] at (3,0) { $v_{3}$};
	\node [above] at (2,1) { $v_{1}$};
	\node [below] at (2,-1) { $v_{4}$};

	\draw [->](1,0)--(2,0);
	\draw [->](3,0)--(2,0);
	\draw [->](2,1)--(1.5,.5);
	\draw [->](3,0)--(2.5,0.5);
	\draw [->](1,0)--(1.5,-0.5);
	\draw [->](2,-1)--(1.5,-0.5);
	\draw[->] (2,-1)--(2.5,-0.5);

\node[below] at (2.1,-1.8) {{\footnotesize $G_{2}$ $(a=0,b=c=1)$}};
	
	\draw (4,0)--(6,0);
	\draw (6,0)--(5,-1);
	\draw (4,0)--(5,-1);
	\draw (4,0)--(5,1);
	\draw (6,0)--(5,1);
	\draw (5,1)--(5,-1);
	
	\draw [fill=black](4,0) circle [radius=0.05];
	\draw [fill=black](6,0) circle [radius=0.05];
	\draw [fill=black](5,-1) circle [radius=0.05];
	\draw [fill=black](5,1) circle [radius=0.05];
	
	\node [left] at (4,0) { $v_{2}$};
	\node [right] at (6,0) { $v_{3}$};
	\node [above] at (5,1) { $v_{1}$};
	\node [below] at (5,-1) { $v_{4}$};

	\draw [->](4,0)--(4.7,0);
	\draw [->](6,0)--(5.3,0);
	\draw [->](5,1)--(4.5,.5);
	\draw [->](6,0)--(5.5,.5);
	\draw [->](5,-1)--(4.5,-0.5);
	\draw [->](4,0)--(4.5,-0.5);	
	\draw[->] (5,-1)--(5.5,-0.5);
	\draw[->](5,-1)--(5,.7);

\node[below] at (5.3,-1.8) {{\footnotesize $G_{3}$ $(a=b=c=1)$}};

	\draw (7.5,0)--(8,1);
	\draw (6.7,0)--(7.5,0);
	\draw (7.5,0)--(8.5,0);
	\draw (8,1)--(8.5,0);

	\draw [fill=black](7.5,0) circle [radius=0.05];
	\draw [fill=black](6.7,0) circle [radius=0.05];
	\draw [fill=black](8.5,0) circle [radius=0.05];
	\draw [fill=black](8,1) circle [radius=0.05];
	
	\node [above] at (8,1) { $v_{1}$};
	\node [below] at (8.5,-.3) { $v_{3}$};
	\node [below] at (7.5,-.3) { $v_{2}$};
	\node [below] at (6.7,-.3) { $v_{4}$};

	\draw [->](7.5,0)--(7,0);
	\draw [->](7.5,0)--(8,0);
	\draw [->](8,1)--(7.75,.5);
	\draw [->](8.5,0)--(8.25,.5);
	\draw [->](8.5,0)--(8,0);
	
	\node[below] at (8.5,-1.8) {{\footnotesize $G_{4}$ $(a=b=c=0)$}};
	\end{tikzpicture}\quad\quad\quad\null
	
	\vspace{-11em}
	\null\hspace{4.9in} 
	$\begin{array}{cccccc}
	&v_{1}&v_{2}& v_{3} &v_{4}\\
	\cline{2-5}
	\multicolumn{1}{c@{\,\vline}}{v_{1}} & 1 & 1 & 0 & \multicolumn{1}{c@{\,\vline}}{0}\\
	\multicolumn{1}{c@{\,\vline}}{v_{2}} & 0 & 1 & 1 & \multicolumn{1}{c@{\,\vline}}{1}\\
	\multicolumn{1}{c@{\,\vline}}{v_{3}} & 1 & 1 & 1 & \multicolumn{1}{c@{\,\vline}}{0} \\
	\multicolumn{1}{c@{\,\vline}}{v_{4}} & a & b & c & \multicolumn{1}{c@{\,\vline}}{1}\\
	\cline{2-5}
	\end{array}$
	
	\vspace{3em}
	\caption{Forbidden digraphs for central ICD when $|V|=4$ and their augmented adjacency matrix.}\label{forbid11}		
\end{figure}	

\begin{exmp}\label{notcicd}
Consider the digraphs $G_k=(V_k,E_k)$ with vertex set $V_k$ and edge set $E_{k}$ for $k=1,2,3,4$ in Figure \ref{forbid11}. Note that $G_4$ is the digraph {\tt (a)} in Figure \ref{mhfig}. From Theorem \ref{icd1} one can easily verify that $v_1<v_2<v_3<v_4$ and its reverse ordering are the only possible ICD ordering of $V_k$ for each graph $G_k$ (i.e., for which $A^*(G_k)$ satisfies the consecutive $1$'s property for rows). Now this augmented adjacency matrix (in Figure \ref{forbid11}) shows a contradiction to (\ref{cicd2}) for $i=2$, $j=3$ for each $k$. Thus these graphs are not central ICD.  
\end{exmp} 

\noindent 
In the following we characterize central ICD. Let $\Real^+$ be the set of all positive real numbers.

\begin{thm}\label{cicdop}
Let $G=(V,E)$ be a simple digraph. Then $G$ is a central ICD if and only if there exist a distinct  
labeling $\Map{f}{V}{\Real^+}$ of vertices \footnote{Given a positive real number labeling, one can easily obtain a positive rational number labeling with slight adjustment and again those can be changed to positive integers by scaling as required. Thus we note that natural number labeling will produce the same class of digraphs.} which satisfies the following condition:
\begin{equation}\label{cicd1}
d(i,j)<d(i,k)\text{ for all }i,j,k\in\set{1,2,\ldots,n}\text{ such that }v_iv_j\in E\text{ and }v_iv_k\not\in E
\end{equation}
where $d(i,j)=|f(v_{i})-f(v_{j})|$ for all $i,j\in\{1,2,\hdots,n\}$.\\
(i.e., every out-neighbor distance is less than every non-out-neighbor distance from a vertex)

\end{thm}

\begin{proof}
Suppose $G=(V,E)$ be a central ICD with $V=\set{v_1,v_2,\ldots,v_n}$ and\\
 $\Set{(I_i,c_i)}{i=1,2,\ldots,n}$ be a central point-interval representation of $G$ where $v_i$ corresponds to $(I_i,c_i)$, i.e., $v_iv_j\in E$ if and only if $c_j\in I_i$ and assume that $v_i$'s are ordered according to increasing sequence of $c_i$'s (without loss of generality, we also assume that $c_i$'s are distinct). Define a labeling $\Map{f}{V}{\Real^+}$ by $f(v_i)=c_i$. This vertices are ordered according to increasing order of their labels. Now let $i,j,k\in\set{1,2,\ldots,n}$ such that $v_iv_j\in E$ and $v_iv_k\not\in E$. We consider following cases:

\vspace{1em}\noindent
{\bf Case I:}\ $i<j<k$ or, $k<j<i$. Then $d(i,k)=d(i,j)+d(j,k)>d(i,j)$ for $d(j,k)>0$ as $c_i$'s are distinct.

\vspace{1em}\noindent
{\bf Case II:}\ $i<k<j$ or, $j<k<i$. These cases are not possible by (\ref{icd1}) as $G$ is an ICD.

\vspace{1em}\noindent
{\bf Case III:}\ $j<i<k$ or, $k<i<j$. Now $v_iv_j\in E$ and $v_iv_k\not\in E$ imply $c_j\in I_i$ but $c_k\not\in I_i$. Let $r=\frac{|I_i|}{2}$. Since $c_i$ is the central point of $I_i$, we have $I_i=[c_i-r,c_i+r]$. Then $d(i,j)=|c_i-c_j|<r<|c_i-c_k|=d(i,k)$.

\vspace{1em}\noindent 
Conversely, suppose $G$ satisfies (\ref{cicd1}) with a labeling $f$. Let us arrange vertices according to increasing order of their labels. For each $i=1,2,\ldots,n$, define $c_i=f(v_i)$. Let $i_1$ and $i_2$ be the least and the highest numbers such that $i_1=i$ or, $v_iv_{i_1}\in E$ and $i_2=i$ or, $v_iv_{i_2}\in E$. Note that $i_1\leqslant i\leqslant i_2$. Define $r_i=\max\set{d(i,i_1),d(i,i_2)}$ and $I_i=[c_i-r_i,c_i+r_i]$. We show that $\Set{(I_i,c_i)}{i=1,2,\ldots,n}$ is a central point-interval representation of $G$, i.e., $G$ is a central ICD. As $c_{i}$ is the center point of $I_{i}$ for each $i\in\{1,\hdots,n\}$, it is sufficient to prove that $G$ is an ICD.

\vspace{1em}\noindent
We verify (\ref{icd1}) to show that $G$ is an ICD. Let $i<j<k$ and $v_iv_k\in E$. Now $d(i,k)=d(i,j)+d(j,k)>d(i,j)$. So $v_{i}v_{j}\in E$. Let $v_kv_i\in E$. Again $d(k,i)=d(k,j)+d(j,i)>d(k,j)$. So $v_{k}v_{j}\in E$ as required.
\end{proof}

\begin{exmp}
Consider the digraph $D_{10}$ in Figure \ref{relation1}. It can be easily checked that $v_1<v_2<v_3<v_4$ is an ICD ordering of $D_{10}$ and $D_{10}$ is a central ICD with the labeling: $f(v_1)=1$, $f(v_2)=3$, $f(v_3)=4$ and $f(v_4)=6$.
\end{exmp}

\noindent 
Interestingly a family of (undirected) graphs satisfying the condition analogous to (\ref{cicd1}) becomes a well known class of graphs, namely, proper interval graphs.

\begin{thm}
Let $G=(V,E)$ is a graph. Then $G$ is a proper interval graph if and only if there exist an ordering of vertices and a distinct labeling $f:V\rightarrow \mathbb{R}^+$ of vertices such that
\begin{equation}\label{proper23}
 d(u,v)<d(u,w)\ \text{ for all }\ u,v,w\in V \text{ such that }\ uv\in E\ \text{ but }\ uw\notin E,
 \end{equation}
where $d(u,v)=|f(u)-f(v)|$ for all $u,v\in V$.
\end{thm}

\begin{proof}
Let $G=(V,E)$ be a proper interval graph. By Theorem \ref{proper1} the reduced graph $\tilde{G}=(\tilde{V},\tilde{E})$ of $G$ is an induced subgraph of $G(n,r)=(V_{n},E^{'})$ for some $n,r\in N$ with $n>r$. Let $V_{n}=\{x_{1},x_{2},\hdots,x_{n}\}$ such that $x_{i} \leftrightarrow x_{j}$ in $G(n,r)$ if and only if $0<|i-j|\leq r$. Let $\tilde{V}=\{x_{i_{1}},x_{i_{2}},\hdots,x_{i_{m}}\}$ such that $i_{1}<i_{2}<\hdots<i_{m}$. For convenience, we write $y_{j}=x_{i_{j}}$ for $j=1,2,\hdots,m$. Define $f: V\rightarrow \Real^+$ by $f(u)=i_{j}+\dfrac{k}{z+1}$ if $u$ is a  $k^{\text{th}}$ copy among $z$ copies of $y_{j}$ (for any but a fixed permutation of them). We arrange the vertices of $V$ according to the increasing order of vertices in $\tilde{V}$ keeping copies of same vertices together.
Let $u,v,w\in V$ such that $u\leftrightarrow v$ and $u\nleftrightarrow w$.
Let $f(u)=i_{p},f(v)=i_{q}$ and $f(w)=i_{t}$ for some $p,q,t\in\{1,2,\hdots,n\}$. Then
$|i_{p}-i_{q}|\leq r$ and $|i_{p}-i_{r}|>r$. So $d(u,v)=|f(u)-f(v)|<|f(u)-f(w)|=d(u,w)$. Therefore $d(u,v)<d(u,w)$ for all $u,v,w\in V$ such that $u\leftrightarrow v$ and $u\nleftrightarrow w$. 

\vspace{.1cm}
\noindent Conversely, let $G=(V,E)$ satisfies (\ref{proper23}) with a labeling $f$. We arrange the vertices of $G$ according to the increasing order of their labels.

\noindent Let $i<j<k,$ where $i,j,k\in \{1,2,\hdots,n\}$ and $v_{i}\leftrightarrow v_{k}$. Then $d(v_{i},v_{j})=d(v_{i},v_{k})-d(v_{j},v_{k})<d(v_{i},v_{k})$.
Also $d(v_{j},v_{k})=d(v_{i},v_{k})-d(v_{i},v_{j})<d(v_{i},v_{k})$. Then by (\ref{proper23}), $v_{i}\leftrightarrow v_{j}$ and $v_{j}\leftrightarrow v_{k}$. 
Thus $G$ satisfies umbrella property and hence by Theorem \ref{proper1}, $G$ becomes a proper interval graph.
\end{proof}

\section{Oriented interval catch digraph}

\noindent We first note that the characterization of an oriented ICD is obvious. A simple digraph $G=(V,E)$ is an oriented ICD if and only if $G$ is oriented and there exists an ordering of vertices in $V$ satisfying (\ref{icd1}) or, equivalently, there exists an ordering of vertices in $V$ such that $A^*(G)=(a_{i,j})$ satisfies consecutive $1$'s property for rows and for each pair $i\neq j$, $a_{i,j}=1$ implies $a_{j,i}=0$. Now we study some important properties of oriented ICD.

\begin{thm}\label{dag}
An oriented ICD is a directed acyclic graph (DAG).
\end{thm}

\begin{proof}
Let $G$ be an oriented ICD. We first show that $G$ has no induced directed cycles. Note that any induced subdigraph of an ICD is also an ICD by definition. Now it is easy to check that no induced directed cycles of length $\geq 3$ satisfies (\ref{icd1}) and so they are not ICD. Finally, since the digraph is oriented, there are no $2$-cycles.

\vspace{1em}\noindent
Now we use induction on the length of the cycle. Since $G$ has no induced directed cycles, in particular, $G$ has no directed cycles of length $3$. Suppose there are no directed cycles of length less than $k$ and $G$ has a directed cycle $C$ of length $k>3$. Since $C$ is not induced, there must be a chord. Interestingly, every such arc (chord) forms a smaller directed cycle on one side of it along with some arcs of $C$. This contradicts the induction hypothesis.
\end{proof}

\noindent
The following corollary is immediate from the above theorem.

\begin{cor}\label{ss1}
Let $G=(V,E)$ be an oriented ICD. Then every induced $3$-cycle of $U(G)$ is transitively oriented in $G$.
\end{cor}

\noindent
In \cite{Maehera} Maehara proved that an acyclic ICD is a central ICD. Thus we have the following:

\begin{cor}
Every oriented ICD is a central ICD.
\end{cor}

\begin{defn}
Let $G$ be an oriented digraph and $C$ be an even chordless cycle of $G$ which is not a directed cycle. Then $C$ is said to be {\em alternatively oriented} if any two arcs of $C$ with common end point have opposite directions.  
\end{defn}

\begin{prop}\label{alternative}
Let $G=(V,E)$ be an oriented ICD and $C$ be a chordless $4$-cycle of $G$. Then $C$ is alternatively oriented.
\end{prop}

\begin{proof}
Let $C=(v_{1},v_{2},v_{3},v_{4},v_1)$ be a chordless $4$-cycle in $G$. By Theorem \ref{dag}, $C$ is not a directed $4$-cycle. Then there are only $3$ other (non-isomorphic) options which are described in Figure \ref{t123}. The first option $T_1$ is alternatively oriented. The two other options are not ICD. For example, in the case of $T_2$, in order to find an ICD ordering we need the elements of each of the sets $\set{v_1,v_2,v_4}$, $\set{v_2,v_3}$ and $\set{v_3,v_4}$ to be consecutive. Since the ordering is linear and we require elements of sets $\set{v_2,v_3}$ and $\set{v_3,v_4}$ to be consecutive in that ordering. So we have either $v_2,v_3,v_4$ or $v_4,v_3,v_2$ consecutive in that ordering. But then $\set{v_1,v_2,v_4}$ cannot be consecutive. So $T_2$ is not ICD. Similarly, it can be shown that $T_3$ is not an ICD.
\end{proof}  

\begin{figure}[t]
\begin{center}

\begin{tikzpicture}[scale=0.75]

\draw (0,0)--(2,0);
\draw (2,0)--(2,2);
\draw (0,0)--(0,2);
\draw (2,2)--(0,2);

\draw (4,0)--(6,0);
\draw (4,0)--(4,2);
\draw (6,0)--(6,2);
\draw (4,2)--(6,2);

\draw (8,0)--(10,0);
\draw (8,0)--(8,2);
\draw (10,0)--(10,2);
\draw (8,2)--(10,2);

\draw  [fill=black](0,0) circle [radius=0.05];
\draw  [fill=black](2,0) circle [radius=0.05];
\draw  [fill=black](2,2) circle [radius=0.05];
\draw  [fill=black](0,2) circle [radius=0.05];
\draw  [fill=black](4,0) circle [radius=0.05];
\draw  [fill=black](6,0) circle [radius=0.05];
\draw  [fill=black](4,2) circle [radius=0.05];
\draw  [fill=black](6,2) circle [radius=0.05];

\draw  [fill=black](8,0) circle [radius=0.05];
\draw  [fill=black](10,0) circle [radius=0.05];
\draw  [fill=black](8,2) circle [radius=0.05];
\draw  [fill=black](10,2) circle [radius=0.05];

\node [left] at (0,0) {$v_{4}$};
\node [left] at (0,2) {$v_{1}$};
\node [right] at (2,0) {$v_{3}$};
\node [right] at (2,2) {$v_{2}$};

\node [left] at (4,0) {$v_{4}$};
\node [left] at (4,2) {$v_{1}$};
\node [right] at (6,0) {$v_{3}$};
\node [right] at (6,2) {$v_{2}$};

\node [left] at (8,0) {$v_{4}$};
\node [left] at (8,2) {$v_{1}$};
\node [right] at (10,0) {$v_{3}$};
\node [right] at (10,2) {$v_{2}$};

\draw [->](2,0)--(1,0);
\draw [->](0,2)--(0,1);
\draw [->](2,0)--(2,1);
\draw [->](0,2)--(1,2);

\draw [->](6,0)--(5,0);
\draw [->](6,2)--(6,1);
\draw [->](4,2)--(5,2);
\draw [->](4,2)--(4,1);

\draw [->](8,0)--(9,0);
\draw [->](8,2)--(8,1);
\draw [->](8,2)--(9,2);
\draw [->](10,2)--(10,1);

\node [below] at (1,-0.5){ $T_{1}$};
\node [below] at (5,-0.5){ $T_{2}$};
\node [below] at (9,-0.5){ $T_{3}$}; 
\end{tikzpicture}

\end{center}

\vspace{-1em}
\caption{Orientation of $4$-cycles which are not directed $4$-cycles.}\label{t123}
\end{figure}

\noindent
Now being an oriented acyclic digraph, an oriented ICD has some rich properties. For example, a unilaterally connected oriented ICD has unique source (vertex of indegree zero) and unique sink (vertex of outdegree zero). Hence it possesses unique hamiltonian path (see \cite{Bang}). In the following we characterize oriented ICDs which are tournaments.

\vspace{1em} \noindent 
A {\em Ferrers digraph} is a directed graph $G=(V, E)$ whose successor sets are linearly ordered by inclusion, where the successor set of $u\in V$ is its set of out-neighbors $\Set{v\in V}{uv \in E}$. A $(0,1)$-matrix $M$ is a {\em Ferrers matrix} if $1$'s are clustered in a corner of $M$. A digraph $G$ is Ferrers digraph if and only if there exists a permutation of vertices of $G$ such that its adjacency matrix is a Ferrers matrix \cite{BDGS}. For a $(0,1)$-matrix $M$, $\overline{M}$ denotes the matrix obtained from $M$ by interchanging $0$'s and $1$'s.

\begin{thm}
Let $G=(V,E)$ be an oriented ICD. Then $G$ is a tournament if and only if there is an ordering of vertices of $G$ with respect to which the augmented adjacency matrix $A^{*}(G)$ takes one of the following forms:
$${
A^{*}(G)=
\left[
\begin{array}{c|c}
M & F \\
\hline
\overline{F^{T}} & N
\end{array}
\right]=
}
{
\begin{array}{ccccc|cccccccccccc}
 & v_1 & v_2 & . & v_{l} & v_{l+1} & . & v_k& v_n \\

\cline{2-9}
\multicolumn{1}{c@{\,\vline}}{v_1} & 1 & 1 & 1 & 1 & 1 & 1  &\multicolumn{1}{c@{\,\vline}}{1} &\multicolumn{1}{c@{\,\vline}}{0}  \\

\cline{8-8}
\multicolumn{1}{c@{\,\vline}}{v_2} & 0 & 1 & 1 & 1 & 1 & \multicolumn{1}{c@{\,\vline}}{1} & 0 &\multicolumn{1}{c@{\,\vline}}{0} \\

\cline{7-7}
\multicolumn{1}{c@{\,\vline}}{.} & 0 & 0 &1 & 1 &\multicolumn{1}{c@{\,\vline}}{1} & 0 & 0 & \multicolumn{1}{c@{\,\vline}}{0}\\

\cline{6-6}
\multicolumn{1}{c@{\,\vline}}{v_{l}} & 0 & 0 & 0 & 1 & 0 & 0 & 0 & \multicolumn{1}{c@{\,\vline}}{0} \\

\cline{2-9}

\multicolumn{1}{c@{\,\vline}}{v_{l+1}} & 0 & 0 & \multicolumn{1}{c@{\,\vline}}{0} & 1 & 1 & 0 & 0 & \multicolumn{1}{c@{\,\vline}}{0}\\

\cline{4-4}

\multicolumn{1}{c@{\,\vline}}{.} & 0 &\multicolumn{1}{c@{\,\vline}}{0} & 1 & 1 & 1 & 1 & 0 & \multicolumn{1}{c@{\,\vline}}{0}\\

\cline{3-3}

\multicolumn{1}{c@{\,\vline}}{v_k} &\multicolumn{1}{c@{\,\vline}}{0} & 1 & 1 & 1 & 1 &1 &1 & \multicolumn{1}{c@{\,\vline}}{0}\\

\cline{2-2}
\multicolumn{1}{c@{\,\vline}}{v_n} & 1 & 1 & 1 & 1 & 1 & 1 & 1 & \multicolumn{1}{c@{\,\vline}}{1}\\

\cline{2-9}
\end{array}
}\ \text{or}\ 
A^{*}(G)=
\begin{array}{cccccc}
 &v_{1}&.& . &v_{n}\\
\cline{2-5}
\multicolumn{1}{c@{\,\vline}}{v_1} & 1 & 0 & 0 & \multicolumn{1}{c@{\,\vline}}{0}\\
\multicolumn{1}{c@{\,\vline}}{.} & 1 & 1 & 0 & \multicolumn{1}{c@{\,\vline}}{0}\\
\multicolumn{1}{c@{\,\vline}}{.} & 1 & 1 & 1 & \multicolumn{1}{c@{\,\vline}}{0} \\
\multicolumn{1}{c@{\,\vline}}{v_n} & 1 & 1 & 1 & \multicolumn{1}{c@{\,\vline}}{1}\\
\cline{2-5}
\end{array}
= P$$
\noindent
where $M$ is an upper triangular matrix with all entries on and above the principal diagonal are $1$ and other enties are $0$; $N$ and $P$ are lower triangular matrices with all entries on and below the principal diagonal are $1$ and other entries are $0$ and $F$ is a Ferrers matrix.
\end{thm}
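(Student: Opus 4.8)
The plan is to reduce everything to the consecutive $1$'s property of $A(D)$ via Maehara's theorem (Theorem \ref{thicd}), and then read off the block structure from the tournament condition $a_{ij}+a_{ji}=1$ for $i\neq j$. I would fix a consecutive $1$'s ordering $v_1,\ldots,v_n$ (which exists precisely when $D$ is an ICD), and for each $i$ write the closed out-neighbourhood of $v_i$ as the interval of positions $[i_1,i_2]$ with $i_1\leqslant i\leqslant i_2$, exactly as in Proposition \ref{lcicd}. Call $v_i$ \emph{left-anchored} if $i_1=i$ and \emph{right-anchored} if $i_2=i$. The claim to establish is that this ordering already exhibits one of the two displayed forms.

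For the forward (``only if'') direction I would proceed in four steps. First, every vertex is left- or right-anchored: if some $v_i$ had $i_1<i<i_2$, then $v_i\to v_{i_1}$ and $v_i\to v_{i_2}$ force $(i_1)_2<i<(i_2)_1$ by consecutiveness, and then whichever way the arc between $v_{i_1}$ and $v_{i_2}$ points contradicts one of these inequalities. Second, using only the tournament property, among left-anchored vertices $i<j$ forces $v_i\to v_j$ (as $v_j$ has no out-neighbour to its left), so they induce the increasing transitive tournament $M$; dually the right-anchored vertices induce the decreasing transitive tournament $N$. Third, I would locate the blocks: if a strictly right-anchored vertex preceded a strictly left-anchored one, the two defining conditions would make the arc between them point both ways, which is impossible; the unique possible sink (out-degree $0$) is both left- and right-anchored and, since every other vertex points to it, is squeezed into the position just after all strictly-left-anchored vertices. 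Hence the positions split as $\{1,\ldots,l\}=L$ and $\{l+1,\ldots,n\}=R'$. Finally, I would establish the Ferrers shape of the upper-right block $F$, whose $(i,j)$ entry equals $1$ exactly when $j\leqslant i_2$ and which is therefore left-justified in each row; the lower-left block equals $\overline{F^{T}}$ automatically since $a_{ji}=1-a_{ij}$ across the two blocks.

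For the backward (``if'') direction I would simply verify the consecutive $1$'s property row by row and invoke Theorem \ref{thicd}. In a row of the $[\,M\mid F\,]$ part the $M$-portion contributes the block $[i,l]$ and the left-justified Ferrers row contributes a prefix $[l+1,l+s_i]$, so the row is the single interval $[i,l+s_i]$; dually, in a row of the $[\,\overline{F^{T}}\mid N\,]$ part the right-justified block $\overline{F^{T}}$ contributes a suffix ending at column $l$ and $N$ contributes $[l+1,j]$, again giving one interval. The second form $N$ is lower triangular with consecutive rows, so it is immediate, and it also covers the degenerate transitive case obtained by reversing the ordering when $R'$ (or $L$) is empty.

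The main obstacle is the monotonicity lemma in the last step of the forward direction, namely that within $L$ the right endpoints $i_2$ are non-increasing, without which $F$ need not be Ferrers. I expect to prove it by contradiction: if $i<i'$ in $L$ but $i_2<i'_2$, consider $w=v_{i'_2}$; then $v_{i'}\to w$ shows that $i'$ lies outside the out-neighbourhood of $w$, while $v_i\not\to w$ gives $w\to v_i$ and hence $w_1\leqslant i<i'<\mathrm{pos}(w)\leqslant w_2$, forcing $i'\in[w_1,w_2]$, a contradiction. A secondary nuisance is the bookkeeping for the sink and the empty-block degeneracies, which must be checked so that the sink lands as the all-zero final row of $M$ (equivalently the all-zero top row of $F$).
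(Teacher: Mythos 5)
Your proposal is correct, and it rests on the same two ingredients as the paper's argument --- a consecutive-$1$'s ordering supplied by Theorem \ref{thicd}, plus the tournament condition $a_{ij}+a_{ji}=1$ forcing the block structure --- but the forward direction is organized genuinely differently. The paper builds an auxiliary set $S=\set{i}{(v_i)_1=v_i,\ (v_i)_2\leqslant (v_{i-1})_2}$, proves by induction that $S=\{2,\dots,l\}$ with $l=\max S$, and then extracts the sink row, the Ferrers block and the lower-triangular tail from that; you replace this with three clean lemmas: the trichotomy (every vertex is strictly left-anchored, strictly right-anchored, or the unique sink), the separation of the two anchored classes with the sink wedged between them, and the monotonicity of the right endpoints $i_2$ over the left block. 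Your direct contradiction via the witness row $w=v_{i'_2}$ (a $1$ at column $i$, a $0$ at column $i'$, a $1$ on the diagonal) is exactly the content the paper's induction is chasing, and is easier to verify. Two remarks. First, you defer the existence and placement of the sink to ``bookkeeping''; it does need saying, but it closes in one line from your separation step: if $v_l$ were strictly left-anchored and $v_{l+1}$ strictly right-anchored, then $v_l\to v_{l+1}$ puts a $0$ at position $(l+1,l)$ strictly between the $1$'s at $(l+1,(l+1)_1)$ and $(l+1,l+1)$, violating consecutiveness; so the sink exists, sits at position $l$, and gives the all-zero row of $F$ (the paper's Case II is the degenerate $l=1$, where the whole matrix collapses to $N$). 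Second, you actually verify the converse by checking the consecutive-$1$'s property of both displayed forms row by row (note this uses the specific orientation of the Ferrers staircase shown in the display, i.e.\ rows of $F$ left-justified, which your forward direction establishes); the paper omits this direction entirely, so on that point your write-up is the more complete of the two.
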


\begin{proof}
Let $G=(V,E)$ be an oriented ICD which is a tournament. We order the vertices of $V$ according to the increasing value of the associated points in their corresponding intervals. Let $\{v_{1},\hdots,v_{n}\}$ be the required ordering which is in fact, an ICD ordering, i.e., $\{v_{1},\hdots,v_{n}\}$ satisfies (\ref{icd1}). Hence $A=A^{*}(G)$ satisfies consecutive $1$'s property for rows with respect to this ordering. For each $1\leq i \leq n$, let ${(v_{i})}_{1}$ and ${(v_{i})}_{2}$ be the column numbers where first and last $1$ occur in the $i$-th row. We denote the $(i,j)$-th entry of the matrix $A$ by $a_{i,j}$.  

\vspace{0.25em}\noindent 
{\bf Case I:}\ ${(v_{1})}_{2}=k>1$.

\vspace{0.25em}\noindent 
In this case, $a_{1,2}=1$ (as $k>1$). So $a_{2,1}=0$ as $G$ is a tournament. Hence ${(v_{1})}_{1}=1<2={(v_{2})}_{1}$. Now the following subcases may happen.

\begin{itemize}
\item[{\bf Case I(a):}] If ${(v_{2})}_{2}=2$ then $2={(v_{2})}_{2}={(v_{2})}_{1}$ which implies the vertex $v_{2}$ has out degree $0$. In this case we define $l=2$ (note that the matrix $M$ in the statement is of the order $l\times l$).

\item[{\bf Case I(b):}] If ${(v_{2})}_{2}>2$, then we can show that ${(v_{2})}_{2}\leq {(v_{1})}_{2}$. If not, let ${(v_{2})}_{2}>{(v_{1})}_{2}$. Then $a_{1,(v_{2})_{2}}=0$ which implies $a_{{(v_{2})}_{2},1}=1$ as $G$ is a tournament. Now $a_{2,(v_2)_2}=1$ by definition. Then $a_{(v_{2})_{2},2}=0$. But we have $a_{{(v_{2})}_{2},1}=1$ and $a_{{(v_{2})}_{2}, {(v_{2})}_{2}}=1$. This contradicts the consecutive $1$'s property for the ${(v_{2})}_{2}$-th row as $1<2<(v_2)_2$. Hence ${(v_{2})}_{2}\leq {(v_{1})}_{2}$. 

\vspace{0.25em}\noindent
Now since ${(v_{2})}_{2}>2$, $a_{2,3}=1$ and hence $a_{3,2}=0$. Thus ${(v_{3})}_{1}=3$. Let 
$$S=\Set{i}{{(v_{i})}_{1}=i,\ {(v_{i})}_{2}\leq {(v_{i-1})}_{2},\ 2\leq i\leq n}.$$ 
Already we have shown that $2\in S$. This implies $S\neq \emptyset$. As $|V|$ is finite $S$ must have a maximum, say, $l$. First we will show that $\Set{i}{2\leq i\leq l}\subseteq S$. As $2\in S$, applying induction we assume $\set{2,3,\ldots,j}\subseteq S$. If $j+1\leq l$, then we  show that $j+1\in S$. 

\vspace{0.25em}\noindent
If ${(v_{j})}_{2}=j$ then $a_{j,l}=0$ as $l>j$. Thus $a_{l,j}=1$ and so $(v_l)_1\leq j<l$ which contradicts $l\in S$. Therefore $(v_j)_2>j$. But then $a_{j,j+1}=1$ which implies $a_{j+1,j}=0$ and so $(v_{j+1})_1=j+1$. Next we show $(v_{j+1})_2\leq (v_j)_2$.

\vspace{0.25em}\noindent
On contrary let $(v_{j+1})_2>(v_j)_2$. Since $(v_j)_2>j$, we have $(v_{j+1})_2\geq j+1$. Now if ${(v_{j+1})}_{2}=j+1$, then ${(v_{j+1})}_{2}\leq (v_j)_2$ as $(v_j)_2>j$. So suppose ${(v_{j+1})}_{2}>j+1$. Now $a_{j,(v_{j+1})_2}=0$ as we have assumed $(v_{j+1})_2>(v_j)_2$. Thus $a_{(v_{j+1})_2,j}=1$. Again $a_{j+1,(v_{j+1})_2}=1$ implies $a_{(v_{j+1})_2,j+1}=0$. But $a_{(v_{j+1})_2,(v_{j+1})_2}=1$. This contradicts the consecutive $1$'s property for $(v_{j+1})_2$-th row as $j<j+1<(v_{j+1})_2$. So we must have $(v_{j+1})_2\leq (v_j)_2$ and hence $j+1\in S$. This completes the induction. Therefore $\Set{i}{2\leq i\leq l}\subseteq S$ and so 
$${(v_{i})}_{1}=i, {(v_{i})}_{2}\leq {(v_{i-1})}_{2}\text{ for all }i\text{ with }2\leq i\leq l.$$
\end{itemize}

\noindent
Now we will show $l\leq k=(v_1)_2$ and ${(v_{l})}_{2}=l$. On contrary let $l>k$. Then $a_{1,l}=0$ as $k=(v_1)_2$. This imply $a_{l,1}=1$. Then $({v_{l})}_{1}=1<l$ which contradicts $l\in S$. Therefore $l\leq k$. 

\vspace{0.5em}\noindent
Now if ${(v_{l})}_{2}>l$ then $a_{l,l+1}=1$ which imply $a_{l+1,l}=0$, i.e., ${(v_{l+1})}_{1}=v_{l+1}$. But $l+1\notin S$. This implies ${(v_{l+1})}_{2}>{(v_{l})}_{2}>l$. Hence $a_{l,(v_{l+1})_{2}}=0$ which imply $a_{(v_{l+1})_{2},l}=1$. But $a_{{(v_{l+1})}_{2},l+1}=0$ as $a_{l+1,(v_{l+1})_2}=1$ and $a_{{(v_{l+1})}_{2},{(v_{l+1})}_{2}}=1$. Thus $(v_{l+1})_2\neq l+1$ and so $(v_{l+1})_2>l+1$. Then we have a contradiction in the consecutive $1$'s property for the ${(v_{l+1})}_{2}$-th row as $l<l+1<(v_{l+1})_2$. So ${(v_{l})}_{2}=l$. Already we have ${(v_{l})}_{1}=l$. Thus the $l$-th row contains only one $1$ at the position $(l,l)$.

\vspace{0.5em}\noindent  
Therefore the rows $\set{1,2,\ldots,l}$ form upper triangular matrix $M$ with all entries on and above diagonal as $1$ and $F$ becomes a Ferrers matrix formed by the rows $\set{1,2,\ldots,l}$ and columns $\set{l+1,l+2,\ldots,n}$ as ${(v_{l})}_{2}=l\leq{(v_{l-1})}_{2}\leq \ldots \leq {(v_{1})}_{2}=k$. As $G$ forms a tournament, matrix formed by rows $\set{l+1,l+2,\ldots,n}$ and columns $\set{1,2,\ldots,l}$ is $\overline{F^{T}}$.

\vspace{0.5em}\noindent
Now the only remaining thing is to show ${(v_{i})}_{2}=i$ for all $i>l$. On contrary, let there exist some $i$ where $l<i\leq n$ for which ${(v_{i})}_{2}>i$. From this it follows $a_{i,i+1}=1$ which implies $a_{i+1,i}=0$ i.e., ${(v_{i+1})}_{1}=i+1$. Now since $(v_l)_2=l$, $a_{l,i+1}=0$ and so $a_{i+1,l}=1$ which is a contradiction as ${(v_{i+1})}_{1}=i+1>l$. So we have ${(v_{i})}_{2}=i$ for all $i>l$. Thus we get the required lower triangular matrix $N$ formed by the vertices $\set{v_{l+1},v_{l+2},\ldots,v_{n}}$.

\vspace{0.25em}\noindent 
{\bf Case II:}\ ${(v_{1})}_{2}=v_{1}$. 

\vspace{0.25em}\noindent
In this case $a_{1,j}=0$ for all $j$ such that $1< j\leq n$. Since $G$ is a tournament this implies $a_{j,1}=1$ for all such $j$. Then all the entries in the first column of $A$ are $1$. Also since $A$ is the augmented adjacency matrix of $G$, $a_{j,j}=1$ for all $j=1,2,\ldots,n$. Moreover, $A$ satisfies consecutive $1$'s property for rows. Thus we have $a_{j,i}=1$ for all $i$ such that $1\leq i\leq j$, for all $j=1,2,\ldots,n$. Finally since $G$ is a tournament $a_{i,j}=0$ for all $i<j$, for all $j=1,2,\ldots,n$. Therefore $A$ is the lower triangular matrix $P$ with all entries $1$ on and below the principal diagonal. 

\vspace{0.25em}\noindent 
The converse part is obvious from the structure of $A^{*}(G)$ as both the matrices described in the statement have consecutive $1$'s property for rows and for each pair $i\neq j$, $a_{i,j}=0$ if and only if $a_{j,i}=1$. Thus $G$ is an ICD which is a tournament.
\end{proof}

\noindent Next we characterize an oriented ICD $G$ for which its underlying undirected graph $U(G)$ is a tree. We characterize the aforesaid digraphs in terms of forbidden subdigraphs.

\begin{thm}\label{d2}
Let $G=(V,E)$ be a oriented digraph such that $U(G)$ is a tree. Then $G$ is an oriented ICD if and only it does not contain any of $D_{1}, D_{2},K$ as an induced subdigraph.
\end{thm}

\begin{proof}
Let $G$ be a oriented ICD. As $D_{1}$ contains $v,v_{4},v_{5}$, $D_{2}$ contains $v,v_{2},v_{3}$ and $K$ contains $x,y,z$ as diasteroidal triple,
from Theorem \ref{diate} it follows that $G$ can not contain $D_{1},D_{2},K$ as induced subdigraph.

\noindent Conversely let $G$ be a oriented digraph which does not contain $D_{1}, D_{2},K$ as induced subdigraph. Let $S$ be the set of all chains connecting every pair of vertices in $G$ and $\vec{P}=(x=a_{0},a_{1},\hdots, a_{l-1},a_{l}=y)$ be a chain of maximum length (say $l$) between two vertices $x,y$ (say) of $V$.
As $U(G)$ does not contain $K$ as induced subdigraph, it becomes a caterpillar and $U(\vec{P})$ becomes the spine of the caterpillar. Let $a_{i}, i\neq 1,l-1$ be an internal vertex of $V(\vec{P})$. Then looking at $D_{1}$ and $D_{2}$ one can verify that $a_{i}$ can not possess any inneighbour from $V\setminus V(\vec{P})$.
 Only $a_{1},a_{n-1}$ can have inneighbour from $V\setminus V(\vec{P})$ and that must also be one in number as $D_{2}$ is forbidden in $G$. Let $k_{1}(k_{2})$ be the only  possible inneighbour of $a_{1}(a_{n-1})$ outside $V(\vec{P})$ in $G$.
Now if any of the vertices $k_{1},k_{2}$ presents in $G$ then we 
construct the modified chain $\vec{\hat{P}}=(b_{0},b_{1},\hdots, b_{l-1},b_{l})$ by only replacing the vertices $a_{0},a_{l}$ of $V(\vec{P})$ by $k_{1},k_{2}$ respectively. Note that $\vec{\hat{P}}$ is of same length $l$ as $\vec{P}$. 
We now attach a variable $p_{v_{i}}$ to each vertex $v_{i}$ of $G$. Then starting from $p_{b_{0}}$ we place the points $p_{b_{i}}$ corresponding to the vertices $b_{i}$ of the chain $\vec{\hat{P}}$ in increasing order in a single sequence $P_{S}$ (say) along the real line.  From construction it is easy to observe that in the chain $\vec{\hat{P}}$ there is no internal vertex containing inneighbours from $V\setminus V(\vec{\hat{P}})$.  At $i$ th step if $b_{i}$ possesses outneighbours $b_{{i}^{1}}, \hdots, b_{i^{m}}$ then we place $p_{b_{{i}^{1}}} \hdots, p_{b_{i^{m}}}$ between $p_{b_{i}},p_{b_{i+1}}$ in any order in $P_{S}$. Now we attach numbers starting from $1$ to $n$ with each point of $P_{S}$ where $|V|=n$. Let $i_{1}(i_{2})$ be the leftmost (rightmost) indices of the points, which are outneighbours of $v_{i}$ in the sequence $P_{S}$ (consider $i_{1}\leq i \leq i_{2}$) and $l_{i},r_{i},t_{i}$ be the numbers attached to $p_{i_{1}},p_{i_{2}},p_{i}$ respectively. We assign the interval $I_{i}=[l_{i},r_{i}]$ and the corresponding point $p_{i}=t_{i}$ as the value attached to $p_{v_{i}}$ in $P_{S}$ for each vertex $v_{i}\in V$. As outneighbours of each vertex occur consecutively right or left to $v_{i}$ in $P_{S}$, one can easily check from Theorem \ref{e1} that the above representation actually gives us the oriented ICD representation of $G$.
\end{proof}

\begin{figure} 
\centering
\begin{tikzpicture}
\draw[-][draw=black, thick] (0,0) -- (0,1);
\draw[-][draw=black, thick] (0,0) -- (1,-0.5);
\draw[-][draw=black, thick] (1,-0.5) -- (2,-1);
\draw[-][draw=black, thick] (0,0) -- (0,1);
\draw[-][draw=black, thick] (0,0) -- (-1,-0.5);
\draw[-][draw=black, thick] (-1,-0.5) -- (-2,-1);

\draw [fill=black] (0,0) circle [radius=0.09];
\draw [fill=black] (0,1) circle [radius=0.09];
\draw [fill=black] (1,-0.5) circle [radius=0.09];
\draw [fill=black] (2,-1) circle [radius=0.09];
\draw [fill=black] (-1,-0.5) circle [radius=0.09];
\draw [fill=black] (-2,-1) circle [radius=0.09];
\draw[->][draw=black, very thick] (0,1) -- (0,.5);
\draw[->][draw=black, very thick] (0,0) -- (-.5,-.25);
\draw[->][draw=black, very thick] (0,0) -- (.5,-.25);

\node [below] at (0,0) {{$v_{1}$}};
\node [below] at (1,-0.5) {{$v_{3}$}};
\node [below] at (2,-1) {{$v_{4}$}};
\node [below] at (-1,-0.5) {{$v_{2}$}};
\node [below] at (-2,-1) {{$v_{5}$}};
\node [above] at (0,1) {{$v$}};

%%%%%%%%%%%%%%%%%%%%%%%%%%%%%%%%%%%%%%%

\draw[-][draw=black, very thick] (4,0) -- (5,1);
\draw[-][draw=black, very thick] (4,0) -- (3,1);
\draw[-][draw=black, very thick] (4,0) -- (4,-1);

\node [left] at (4,0) {{$v_{1}$}};
\node [above] at (5,1) {{$v_{3}$}};
\node [above] at (3,1) {{$v_{2}$}};
\node [below] at (4,-1) {{$v$}};
\draw[->][draw=black, very thick] (3,1) -- (3.5,0.5);
\draw[->][draw=black, very thick] (5,1) -- (4.5,0.5);

\draw [fill=black] (4,0) circle [radius=0.05];
\draw [fill=black] (5,1) circle [radius=0.05];
\draw [fill=black] (3,1) circle [radius=0.05];
\draw [fill=black] (4,-1) circle [radius=0.05];
%%%%%%%%%%%%%%%%%%%%%%%%%%%%%%%%%%%%%%%%%%%%%%%%%%%%%%%%%%%%%

\node [left] at (8,1.5) {{$x$}};
\node [left] at (6,-0.5) {{$y$}};
\node [right] at (10,-0.5) {{$z$}};

\draw[-][draw=black, thick] (8,1.5) -- (8,1);
\draw[-][draw=black, thick] (8,1) -- (8,0.5);
\draw[-][draw=black, thick] (8,0.5) -- (9,0);
\draw[-][draw=black, thick] (8,0.5) -- (7,0);
\draw[-][draw=black, thick] (9,0) -- (10,-0.5);
\draw[-][draw=black, thick] (7,0) -- (6,-0.5);

\draw [fill=black] (8,1.5) circle [radius=0.09];
\draw [fill=black] (8,1) circle [radius=0.09];
\draw [fill=black] (8,0.5) circle [radius=0.09];
\draw [fill=black] (9,0) circle [radius=0.09];
\draw [fill=black] (7,0) circle [radius=0.09];
\draw [fill=black] (10,-0.5) circle [radius=0.09];
\draw [fill=black] (6,-0.5) circle [radius=0.09];
\end{tikzpicture}
\caption{$D_{1}, D_{2}, K$ (left to right) (undirected edges can have any direction)}\label{forbid5}
\end{figure}

\section{Proper interval catch digraph}

\begin{defn}
An interval catch digraph $G=(V,E)$ having a representation $\{(I_{v},p_{v})|v\in V\}$ where no interval contains other properly is called {\em proper ICD}\index{proper ICD}.
\end{defn}

\noindent Let $G=(V,E)$ be a simple digraph. Then the augmented adjacency matrix $A^{*}(G)$ has a {\em monotone consecutive arrangement (MCA)} \cite{BDGS} if and only if it has independent row and column permutations such that $1$'s appear consecutively in each row and $1_1\leq 2_1  \leq \hdots \leq n_1$ and $1_2\leq 2_2\leq \hdots\leq n_2$ where $|V|=n$ and the values ${i_{1}}$ and $i_{2}$ denote the initial column and final column containing $1$'s in the $i$-th row. Now one can seperate the $1$'s and $0$'s by drawing upper and lower stairs (polygonal path from top left to bottom right) as in Figure \ref{pic1} (right).

\vspace{0.3em}
\noindent In the following we give an adjacency matrix characterization of a proper ICD.

\begin{thm}\label{picd1}
Let $G=(V,E)$ be a simple digraph. Then $G$ is a proper ICD if and only if there exists a vertex ordering $\set{v_1,v_2,\ldots,v_n}$ of $V$ with respect to which the augmented adjacency matrix $A^{*}(G)$ satisfies the following conditions:

\begin{enumerate}	
\item[(1)] $A^{*}(G)$ satisfies consecutive $1$'s property for rows.
\item[(2)] For $i\neq j$, if $i_{1}<j_{1}$ then $i_{2}\leq j_{2}$ where $i_{1}$ and $i_{2}$ be the first and last column numbers containing $1$ in the $i$-th row where $1\leq i\leq n$ in $A^*(G)$.
\end{enumerate}	
\end{thm}

\begin{figure}[t]

{\footnotesize
\[\begin{array}{c|ccccc|}
\multicolumn{1}{c}{} & v_1 & \multicolumn{1}{c}{v_2} & \multicolumn {1}{c}{v_3} & \multicolumn{1}{c}{v_4} &\multicolumn{1}{c}{v_{5}}  \\
\cline{2-6}
 v_1 & 1 & 1 & 0 & 0 & 0\\
%\cline{5-5} 
v_2 & 0 & 1 & 1 & 1 & 0 \\
%\cline{6-6}
v_3 & 1 & 1 & 1 & 0 & 0\\
v_4 & 0 & 0 & 0 & 1 & 1\\
v_5 & 0 & 0 & 1 & 1 & 1 \\
%\cline{7-7}

\cline{2-6}
\end{array}
\begin{array}{c|ccccc|}
\multicolumn{1}{c}{} & v_1 & \multicolumn{1}{c}{v_2} & \multicolumn {1}{c}{v_3} & \multicolumn{1}{c}{v_4} &\multicolumn{1}{c}{v_{5}}  \\
\cline{2-6}
 v_1 & 1 & 1 & 0 & 0 & 0\\
%\cline{5-5} 
v_3 & 1 & 1 & 1 & 0 & 0 \\
%\cline{6-6}
v_2 & 0 & 1 & 1 & 1 & 0\\
v_5 & 0 & 0 & 1 & 1 & 1\\
v_4 & 0 & 0 & 0 & 1 & 1 \\
%\cline{7-7}

\cline{2-6}
\end{array}
\begin{array}{c|c|c|c|c|c|ccc}
\multicolumn{1}{c}{} & \multicolumn{1}{c}{p_1} & \multicolumn{1}{c}{p_{2}} & \multicolumn{1}{c}{p_3} & \multicolumn{1}{c}{p_4} &\multicolumn{1}{c}{p_{5}}& \multicolumn{1}{c}{} && \\
\multicolumn{1}{c}{}&\multicolumn{1}{c}{1.5}&\multicolumn{1}{c}{2}&\multicolumn{1}{c}{4}&\multicolumn{1}{c}{6}&\multicolumn{1}{c}{8}&&&\\
\multicolumn{1}{c}{} & \multicolumn{1}{c}{v_1} & \multicolumn{1}{c}{v_{2}} & \multicolumn{1}{c}{v_3} & \multicolumn{1}{c}{v_4} &\multicolumn{1}{c}{v_{5}}& \multicolumn{1}{c}{}&&  \\
\cline{2-6}
v_1 & \multicolumn{1}{c}{1} &2 & \multicolumn{1}{c}{3}  & \multicolumn{1}{c}{} &&&[1,3]& I_{1} \\ 
\cline{4-4}
v_3 & \multicolumn{1}{c}{}  &\multicolumn{1}{c}{}  &  4&\multicolumn{1}{c}{5} &&&[1.5,5]& I_{3}  \\
\cline{5-5}
\cline{2-2} 
v_2 &   & \multicolumn{1}{c}{} & \multicolumn{1}{c}{}&6 & 7 &&[2,7]&I_{2}\\
\cline{6-6}
\cline{3-3}
v_5 & \multicolumn{1}{c}{} &  & \multicolumn{1}{c}{} & \multicolumn{1}{c}{}&8&9 & [4,9] & I_{5} \\
\cline{4-4}
v_4  & \multicolumn{1}{c}{} &  \multicolumn{1}{c}{} & & \multicolumn{1}{c}{}&&10& [6,10] & I_{4} \\
\cline{2-6}

\end{array}\]
\caption{Augmented adjacency matrix $A=A^*(G)$ of a proper ICD $G$ (left), the row permuted matrix $B$ obtained from $A$ that satisfies MCA property (middle) and a proper ICD representation of $G$ (right).}\label{pic1}
}
\end{figure}

\begin{proof}	
Let $G=(V,E)$ be a proper ICD with representation $\Set{(I_{i},p_{i})}{i=1,2,\ldots,n}$ where $V=\set{v_1,v_2,\ldots,v_n}$, $I_i=[a_i,b_i]$ and $p_i$'s are distinct for $i=1,2,\ldots,n$. Then arranging the vertices according to increasing order of $p_{i}$'s we have consecutive $1$'s property for rows in $A^{*}(G)=(a_{i,j})$. We note that for $i\neq j$, $a_{i,j}=1$ if and only if $v_iv_j\in E$ if and only if $p_j\in I_i$ if and only if $i_1\leq j\leq i_2$ by definition. 

\vspace{0.3em}\noindent
Now for $i\neq j$ suppose $i_{1}<j_{1}$ and $i_{2}>j_{2}$ hold simultaneously in $A^{*}(G)$. Then $p_{i_{1}}\notin I_{j}$ as $i_1<j_1$. Again $i_1<j_1\leq j$ implies $p_{i_1}<p_{j_1}\leq p_j$. Since $p_j\in I_j$, the interval $I_j$ lies entirely right to $p_{i_1}$. So we get $a_{i}<p_{i_{1}}<a_{j}$. As the intervals do not contain other properly we have $b_{i}<b_{j}$. Now as $p_{i_{2}}\in I_{i}$ and $p_{i_{2}}>p_{j}$ (for $i_2>j_2\geq j$) we get $a_{j}\leq p_{j}<p_{i_{2}}<b_{i}<b_{j}$. Then $p_{i_{2}}\in I_{j}$ which is a contraction as $i_2>j_2$. Hence $i_2\leq j_2$.\footnote{Note that, it follows from the condition (2) that ``for $i\neq j$, $i_2<j_2$ implies $i_1\leq j_1$''. For otherwise, let $i_2<j_2$ and $i_1>j_1$. Then by the condition (2), we have $j_2\leq i_2$ (as $j_1<i_1$) which is a contradiction.}
	
\vspace{0.5em}\noindent 
Conversely, let $G$ satisfy conditions $(1)$ and $(2)$. We permute the rows of $A=A^{*}(G)=(a_{i,j})$ according to the non-decreasing order of $i_{1}$'s keeping the columns intact (see Figure \ref{pic1}). If there exist pair of rows $i,j$ for which $i_{1}=j_{1}$, then place $i$-th row prior to $j$-th row when $i_{2}<j_{2}$. If $i_{1}=j_{1}$ and $i_{2}=j_{2}$ then keep $i$-th row prior to $j$-th row only when $i<j$ in $A$.
Let us call the modified matrix by $B=(b_{i,j})$. As $A$ satisfies $(2)$, after permuting only the rows of $A$ it is easy to check that $B$ satisfies MCA. Moreover permutation of rows of $A$ does not affect the adjacency of the graph $G$, i.e., if $i$-th row of $A$ is shifted to $k$-th row in $B$, then 
\begin{equation}\label{aij1}
a_{i,j}=1\text{ if and only if }b_{k,j}=1.
\end{equation} 

\vspace{0.5em}\noindent
{\bf Step I:}\ We show that the diagonal entries of the matrix $B$ must be $1$. \\
Suppose the $i$-th row of $B$ was the $k$-th row of $A$. If $k=i$ then $b_{i,i}=1$ as $a_{i,i}=1$. Suppose $k>i$ in $A$. For each $j=1,2,\ldots, i$, $j_1\leq j\leq i$. Now the $k$-th row of $A$ moved upward to the $i$-th row position in $B$. This implies there exists at least one such $j$ so that $k_1\leq j_1$. Then $k_{1}\leq i<k\leq k_{2}$. Hence $a_{k,i}=1$ which implies $b_{i,i}=1$.  Again if $k<i$ in $A$ then there exists at least one $j\geq i$ such that $j_1<k_1$ or, $j_1=k_1$ and $j_2<k_2$. Then by the condition (2), in either case, we have $j_2\leq k_2$. Thus $k_1\leq k<i\leq j\leq j_2\leq k_2$. This implies $a_{k,i}=1$ and hence $b_{i,i}=1$.

\vspace{0.5em}\noindent 
{\bf Step II:}\ Defining intervals $I_i=[l_i,r_i]$ for the $i$-th row of $B$. \\
Now we associate natural numbers in increasing order on the upper stair of $B$ starting from the left top of it. Let $b_{i}$ be the number on the stair in the $i$-th row where the stretch of consecutive $1$'s ends and $a_i$ be the number on the stair in the $i$-th column where the stretch of consecutive $1$'s begins (e.g., in Figure \ref{pic1}, $a_i=1,2,4,6,8$ and $b_i=3,5,7,9,10$ for $i=1,2,3,4,5$ respectively). Let $i_o$ and $i_e$ denote the first and last column numbers containing $1$ in the $i$-th row of $B$ (similar to the construction of $i_1$ and $i_2$ in $A$). Then by definition of $a_i$ and $b_i$ we have $b_i>a_j$ for all $i_o\leq j\leq i_e$. \footnote{For example, in Figure \ref{pic1}, for $i=2$, $a_{2_o}=1$, $a_{2_e}=4$ and $b_2=5$; for $i=3$, $a_{3_o}=2$, $a_{3_e}=6$ and $b_3=7$; etc.} Thus 
\begin{equation}\label{bij1}
b_{i,j}=1\Longrightarrow b_i>a_j.
\end{equation}
Let $k_i=\max\Set{j}{j_o=i_o}$. We assign interval $I_{i}=[l_{i},r_{i}]$ where 
$$l_{i}=a_{i_o}+\dfrac{i-i_o}{k_i-i_o+1}\text{ and }r_{i}=b_{i}.$$
By (\ref{bij1}), $b_i>a_{i_o}$ and so $b_i\geq a_{i_o}+1>l_i$. Thus $l_i<r_i$ for all $i=1,2,\ldots,n$. Also since $B$ satisfies MCA, one can check that $l_{i}<l_{j}$ and $r_{i}<r_{j}$ for all $i<j$. Therefore the set of intervals $[l_i,r_i]$ gives proper representation. 

\vspace{0.5em}\noindent
{\bf Step III:}\ Defining points $p_{j}$ for each column $j$ of $B$ (as well as of $A$).\\
Let $S_{j}=\Set{l_{i}}{i\geq j,\ b_{i,j}=1}$. Note that $S_{j}\neq \emptyset$ as $b_{j,j}=1$. Assign $p_{j}=\max\set{a_{j}, \max S_{j}}$.

\vspace{0.5em}\noindent 
Now $p_j\geq l_j$ as $l_j\in S_j$. Suppose for some $i\geq j$, $b_{i,j}=1$. Then $i_o\leq j\leq i_e$ which implies $a_{i_o}\leq a_j$. So $l_i< a_{i_o}+1\leq a_j+1\leq b_j$. Also since $b_{j,j}=1$ by Step I, we have $a_j<b_j$ by (\ref{bij1}). Thus $p_j<b_j$. Hence 
\begin{equation}\label{pij1}
p_j\in [l_j,r_j].
\end{equation}
{\bf Step IV:}\ We show that $b_{i,j}=1$ if and only if $p_j\in I_i$.\\
{\it Case (i):}\ $b_{i,j}=1$ and $i>j$.\\
By definition of $S_j$, $l_i\in S_j$ and so $p_j\geq l_i$. Again by (\ref{pij1}), $p_j\leq r_j<r_i$ (as $i>j$). Thus $p_j\in [l_i,r_i]=I_i$.

\vspace{0.5em}\noindent
{\it Case (ii):}\ $b_{i,j}=1$ and $i<j$.\\
As $i<j$, $l_i<l_j\leq p_j$. Also since $b_{i,j}=1$, by (\ref{bij1}), $r_i=b_i>a_j$ which implies $r_i\geq a_j+1$. Moreover, for all $t>j$ with $b_{t,j}=1$, $l_t<a_j+1$ as in Step III. Thus $p_j<a_j+1\leq r_i$. Hence $p_j\in [l_i,r_i]=I_i$.

\vspace{0.5em}\noindent
{\it Case (iii):}\ $b_{i,j}=0$ and $i>j$.\\
We first show that $a_j\leq p_j<a_j+1$. By definition of $p_j$, if $p_j=a_j$, then the claim is true. If $p_j=\max S_j$, then again by definition $p_j\geq a_j$. In this case, $p_j=l_x$ for some $x\geq j$ such that $b_{x,j}=1$. Now $b_{x,j}=1$ implies $x_o\leq j$ and so $a_{x_o}\leq a_j$. By definition $a_{x_o}\leq l_x<a_{x_o}+1$. Then $p_j=l_x<a_{x_o}+1\leq a_j+1$ as required.

\vspace{0.5em}\noindent
In this case, the $\set{i,j}$-th position is left to (or, below) the lower stair, i.e., $j<i_o$. So $a_j<a_{i_o}$ which implies $a_j+1\leq a_{i_o}$. Then by the above argument, $p_j<a_j+1\leq a_{i_o}\leq l_i$ (by definition of $l_i$). Thus We have $p_j<l_i$. Hence $p_j\notin [l_i,r_i]=I_i$.

\vspace{0.5em}\noindent
{\it Case (iv):}\ $b_{i,j}=0$ and $i<j$.\\
the position $\set{i,j}$ is right to (or, above) the upper stair. Hence $b_i<a_j$. But then $r_i=b_i<a_j\leq p_j$. So $p_j$ lies right to the interval $[l_i,r_i]=I_i$. Thus $p_j\notin I_i$.

\vspace{0.5em}\noindent
{\bf Step V:}\ Finally we show that $p_j$ belongs to the interval corresponding to the vertex $v_j$.\\
Note that, if the $j$-th row of $A$ is shifted to the $k$-th row of $B$, then the vertex $v_j$ corresponds to the interval $I_k$. Now by (\ref{aij1}), we have $b_{k,j}=1$ as $a_{j,j}=1$. Then $p_j\in I_k$ by Cases (i) and (ii) of Step IV.

\vspace{0.5em}\noindent
This completes all the verifications. Therefore $G$ is a proper ICD with respect to the above representation.    
\end{proof}

\noindent An {\em unit interval catch digraph} (in brief, unit ICD) is an ICD where every interval has the same length. We note the following result without proof as it goes with the same line as the proof of $(1)\Longleftrightarrow (2)$ in Theorem \ref{proper1}.

\begin{prop}
Let $G$ be an ICD. Then $G$ is a proper ICD if and only if it is a unit ICD.
\end{prop}

\noindent Finally we define a digraph as {\em proper oriented interval catch digraph} (in brief, proper oriented ICD) if it is an oriented ICD where no two intervals are contained in other properly. Hence from Theorem \ref{e1} and Theorem \ref{picd1} we get the following:

\begin{thm}\label{poicd1}
Let $G=(V,E)$ be an oriented digraph. Then $G$ is a proper oriented ICD if and only if there exists a vertex ordering which satisfy the following
\begin{equation} \label{poicd}
\text{for} \ u<v<w, \text{if} \ uw\in E \ \text{then} \ uv,vw\in E \ \text{and if} \ wu\in E \ \text{then} \ wv,vu\in E.
\end{equation}
\end{thm}

\begin{proof}
Let $G=(V,E)$ be a proper oriented ICD. Then by Theorem \ref{picd1}, there exists an ordering of $V$ that satisfies conditions (1) and (2). Let $u<v<w$ be three vertices of $V$ in this ordering and $uw\in E$. Suppose $u,v,w$ correspond to rows $i,j,k$ respectively in $A^*(G)=(a_{i,j})$, where $i<j<k$. Since the matrix is augmented, $a_{i,i}=1$. Also $uw\in E$ implies $a_{i,k}=1$. Thus by consecutive $1$'s property for rows, we have $a_{i,j}=1$ which implies $uv\in E$. Now the digraph is oriented. Thus $a_{j,i}=a_{k,i}=0$. Then $j_1>i\geq i_1$. Thus by condition (2), $j_2\geq i_2$. But $a_{i,k}=1$ and so $i_2\geq k$. Then $j_2\geq k$ which implies $a_{j,k}=1$. Hence $vw\in E$. The other part can be proved similarly.

\vspace{0.5em}\noindent
Conversely, let $G=(V,E)$ be an oriented digraph satisfying the given condition. By Theorem \ref{e1}, it follows that $G$ is an ICD and so it satisfies the condition (1) of Theorem \ref{picd1}. Suppose in $A^*(G)=(a_{i,j})$, $i_1<j_1$ for two distinct rows $i,j$. Now if we can show that $a_{j,i_2}=1$, then it follows that $i_2\leq j_2$ as required. If $i_2\leq j$, then $i_2\leq j_2$ as $j\leq j_2$. Let $i_2>j$. If $i<j$, then $i<j<i_2$ and $a_{i,i_2}=1$. Then by the given condition (first part), we have $a_{j,i_2}=1$. Now if $i>j$, then $i_1<j_1\leq j<i$ and $a_{i,i_1}=1$. Thus by the given condition (second part), we have $a_{j,i_1}=1$. But this implies $j_1\leq i_1$ which is a contradiction.
\end{proof}

\begin{rem}
It follows from the proof of the converse part of the above theorem that in the case of a proper oriented ICD $G$, $A^*(G)$ itself satisfies MCA which may not be true for a proper ICD, in general.
We call the ordering mentioned in (\ref{poicd}) as {\em proper ICD ordering} of $V$. 
\end{rem}

\noindent Next we will give a forbidden subdigraph characterization to all those oriented interval catch digraphs which are proper and their corresponding underlying graphs are chordal.  

\begin{thm}
Let $G=(V,E)$ be an oriented ICD such that $U(G)$ is chordal. Then $G$ is a proper oriented ICD if and only if it does not contain the digraphs of Figure \ref{forbid} as induced subdigraph.
\end{thm}

\begin{proof}
Let $G$ be a proper oriented ICD with representation $\{(I_{v}=[a_{v},b_{v}],p_{v})|v\in V\}$ such that $U(G)$ is chordal. Then from Theorem \ref{poicd1} there exist a vertex ordering (say $\prec$) of $V$ satisfying (\ref{poicd}). Lets assume on contrary that $G$ contains each of the digraphs of Figure \ref{forbid} as induced subdigraph. Without loss of generality we assume $v_{2}\prec v_{4}$. As $v_{2},v_{4}$ are nonadjacent in $D_{3}, D_{6}, D_{7}, D_{8}, D_{9}$ it is easy to check that $v_{1},v_{3}$ must occur within $v_{2},v_{4}$ in the above ordering, i.e; $v_{2}\prec v_{1},v_{3}\prec v_{4}$. Let $v_{1}\prec v_{3}$. From Figure \ref{forbid} one can observe that $\{v_{2},v_{1},v_{3}\}$, $\{v_{1},v_{3},v_{4}\}$ form tournaments in the above digraphs. Hence if $v_{1}v_{3}\in E$ then $v_{2}v_{3},v_{2}v_{1},v_{1}v_{4},v_{3}v_{4}\in E$ as $G$ is oriented and $\prec$ satisfies (\ref{poicd}).
Similarly if $v_{3}v_{1}\in E$ then $v_{3}v_{2},v_{1}v_{2},v_{4}v_{1},v_{4}v_{3}\in E$. Both of these cases imply existence of same proper oriented ICD $\hat{D}$ in $G$ (see Figure \ref{allowed}), which is not isomorphic to any of the digraphs $D_{3}, D_{6}, D_{7}, D_{8}, D_{9}$. If $v_{3}\prec v_{1}$ we get same contradiction. 

\noindent Next for $D_{4}$ one can easily check that either $v_{3}\prec v_{2}\prec v_{4}$ or $v_{4}\prec v_{2}\prec v_{3}$. In both of these cases it is not possible to place $v_{1}$ anywhere in the above ordering as $v_{1}$ is adjacent (nonadjacent) to $v_{2}$ ($v_{3},v_{4}$). Similarly in $D_{5}$, $v_{1}\prec v_{2}\prec v_{3}$ or $v_{3}\prec v_{2}\prec v_{1}$ as $v_{1},v_{3}$ are nonadjacent in $U(D_{5})$. In these cases we can not place $v_{4}$ anywhere in the above ordering follows from its adjacency. Hence contradiction arises in both of these cases.

\vspace{0.3em}

\noindent Conversely, let $G$ does not contain any of the digraphs of Figure \ref{forbid} as induced subdigraph. Now as $G$ is an ICD, it must possess an ICD-ordering $P=(v_{1},v_{2},\hdots,v_{n})$ (say). We want to create a proper ICD ordering $\hat{P}$ from $P$ so that it satisfy (\ref{poicd}) and $G$ become a proper oriented ICD from Theorem \ref{poicd1}. It is known from Theorem \ref{dag} that $G$ is acyclic and it can not contain $D_{2}$ as induced subdigraph follows from Theorem \ref{d2}.
We denote the set of all outneighbours (inneighbours) of a vertex $v_{i}\in V$ by $N_{O}[v_{i}](N_{I}[v_{i}])$ (including $v_{i}$). From definition of ICD-ordering it is clear that vertices of $N_{O}[v_{i}]$ must occur consecutive along $P$. Below we list some lemmas which we repeatedly use throughout the proof.

\begin{lem}\label{lem1}
Let $v\in V$ and $v_{l},v_{r}$ be two out(in)-neighbours and $v_{k}$ be an in(out)-neighbour of $v$ in $G$. Then $v_{l}$ and $v_{r}$ must share an edge.
\end{lem}

\begin{proof}
If $v_{l},v_{r}$ do not have an edge, then the vertices $\{v_{l},v,v_{k},v_{r}\}$ induce $D_{5}$ or $D_{4}$ or $D_{6}$ in $G$, which can not happen. In other case we also get contradiction as $D_{4}, D_{8}$ are forbidden.
\end{proof}

\begin{lem}\label{lem2}
Let $v\in V$ and $v_{l},v_{r}$ be two nonadjacent out(in)-neighbours of $v$ in $G$. Then there exist no common neighbour of $v,v_{l},v_{r}$ in $U(G)$.
\end{lem}

\begin{proof}
On contrary let $v_{k}$ be the common neighbour of $v,v_{l},v_{r}$ in $U(G)$. We first consider the case when $v_{k}$ is an inneighbour of $v$. Now if $vv_{l},vv_{r}\in E$ then from Lemma \ref{lem1} there must exist an edge between $v_{l},v_{r}$ in $U(G)$, which contradicts our assumption. Again if $v_{l}v,v_{r}v\in E$ then $D_{3}$ or $D_{7}$ or $D_{8}$ get induced in $G$, which is not true.

\noindent Next we consider the case when $v_{k}$ is an out-neighbour of $v$. Now if $vv_{l},vv_{r}\in E$ then $D_{6}$ or $ D_{7}$ or $D_{9}$ gets formed by the vertices $\{v,v_{l},v_{k},v_{r}\}$. Again if $v_{l}v,v_{r}v\in E$ then from Lemma \ref{lem1} it follows that $v_{l},v_{r}$ become adjacent in $U(G)$. Hence in both of these cases contradiction arises. Thus the result holds.
\end{proof}

\begin{cor}\label{corl1}
Let $v\in V$. Then $N_{O}[v](N_{I}[v])$ is either a tournament or is disjoint union of two tournaments in $G$.
\end{cor}

\begin{proof}
Note that $v$ can not possesses three nonadjacent out(in)-neighbours in $G$ as $D_{5}$ ($D_{2}$) get induced in that case, which can not be true.
Rest of the proof follows from Lemma \ref{lem2}.
\end{proof}

\begin{defn}\label{outin}
A set $A$ of vertices is said to form an {\em Out-Fountain} if there exists an ordering of vertices such that for each $i< j,$ $v_{i}v_{j}\in E$. Similarly
a set $B$ of vertices is said to form an {\em In-Fountain} if there exists an ordering of vertices such that for each $i> j,$ $v_{i}v_{j}\in E$. A set is said to be a {\em Fountain}\index{Fountain} if and only if it is either an {\em Out-Fountain} or an {\em In-Fountain}. Further we denote the first element by $v_{f}$ and the last by $v_{l}$ of any Fountain. Note that if we reverse the order of an Out-Fountain, it becomes an In-Fountain and vice versa. Moreover any Fountain must be a tournament. 
\end{defn}
\vspace{0.1em}

\noindent\textbf{Notation:} By $A$ we denote an Out-Fountain and $B$ an In-Fountain.  By $N_{O}(A\setminus \{v_{f}\})$ $(N_{I}(B\setminus\{v_{f}\}))$ we mean the set of all out(in)-neighbours of $A\setminus \{v_f\}$ ($(B\setminus \{v_{f}\}$) which lies outside $A$ ($B$).

\begin{lem}\label{lme3}
If $v_{k}$ be an out(in)-neighbour of some element (say $v_{i}$) of $A\setminus\{v_{f}\}$ $(B\setminus\{v_{f}\})$ then $v_{k}$ is also an out(in)-neighbour of 
$v_{j}$ for all $i\leq j\leq l$.
\end{lem}

\begin{proof}
We assume on contrary that $v_{j}$, $i<j\leq l$ be a vertex which does not have $v_{k}$ as its out(in)-neighbour. Then the set $\{v_{f},v_{i},v_{j},v_{k}\}$ induces $D_{3}$ or $D_{4}$ ($D_{4}$ or $D_{9}$), which can not happen.
\end{proof}

\begin{lem}\label{lme4}
The vertices of $N_{O}(A\setminus \{v_{f}\}$ $(N_{I}(B\setminus \{v_{f}\})$ form a tournament.
\end{lem}

\begin{proof}
 Let $v_{k},v_{k'}\in N_{O}(A\setminus \{v_{f}\})$ be two nonadjacent out-neighbours of $v_{i},v_{j}$ respectively where $i<j$ and $v_{i},v_{j}\in A$. Then from Lemma \ref{lme3} it follows that both of them are out-neighbours of $v_{j}.$ Then $\{v_{f},v_{j},v_{k},v_{k'}\}$ induce $D_{5}.$ The proof for $N_{I}(B\setminus \{v_{f}\})$ follows similarly using the fact that $D_{2}$ is not an ICD.
\end{proof}

\begin{lem}\label{lme5}
Let $v_{k}$ be not an out(in)-neighbour of $v_{f}.$ Then it cannot be both an in-neighbour and an out neighbour of $A$($B$).
\end{lem}

\begin{proof}
Let $v_{i},v_{j}\in A$ where $i<j$. As oriented ICD is acyclic, $v_{j}v_{k},v_{k}v_{i}\notin E$. Hence we assume on contrary that $v_{i}v_{k},v_{k}v_{j}\in E$. Then by lemma \ref{lme3}, $v_{j}v_{k}\in E$ but this contradicts the fact that $G$ is oriented. Similarly one can prove for the case of $B$ when $v_{k}$ is not an inneighbour of $v_{f}$.
\end{proof}

\begin{lem}\label{lme6}
There is an ordering such that vertices of $A\cup N_{O}(A\setminus\{v_{f}\}) 
(B\cup N_{I}(B\setminus \{v_{f}\})$ satisfies (\ref{poicd}). Moreover there exists a subset of the above set which forms an Out-Fountain (In-Fountain) under the induced order. 
\end{lem}

\begin{proof}
We arrange the vertices of $N_{O}(A\setminus \{v_{f}\})$ in decreasing order of their out-degrees restricted on the set. We place the elements of $N_{O}(A\setminus \{v_{f}\})$ to the right of $v_{l}$. Let $\prec$ be the combined order on the set $A\cup N_{O}(A\setminus\{v_{f}\})$. Let $v_{1},v_{2},v_{3}$ be any three vertices.
If all of the three lie in $A$ then from Definition \ref{outin} they satisfy (\ref{poicd}). Again if all of them come from $N_{O}(A\setminus\{v_{f}\})$, using Lemma \ref{lme4} one can verify that they satisfy (\ref{poicd}). In view of Lemma \ref{lme3} this is also done if two of them lies in $A$. Now if $v_{1}\in A$, $v_{2},v_{3}\in N_{O}(A\setminus \{v_{f}\})$ and $v_{2}\prec v_{3}$, then let $v_{1}v_{3}\in E$. Clearly $v_{2}v_{3}\in E$ (from definition of $\prec$). It is sufficient to prove now $v_{1}v_{2}\in E$. Let $v_{j}$ be the first inneighbour of $v_{2}$ in $A$. Now if $v_{1}\prec v_{j}$ in $A$, then $v_{j}v_{3}\in E$ from Lemma \ref{lme3} as $v_{1}v_{3}\in E$. Moreover as $v_{j}v_{2}\in E$, $v_{2}v_{1}\notin E$ from Lemma \ref{lme5}. Therefore $\{v_{1},v_{j},v_{2},v_{3}\}$ induce $D_{3}$ in $G$ which is a contradiction. Hence $v_{j}\prec v_{1}$ and therefore from Lemma \ref{lme3} we get $v_{1}v_{2}\in E$.

\noindent Again in view of Lemma \ref{lme3} there exists a vertex $v_{i}\in A$ such that each element in $N_{O}(A\setminus \{v_{f}\})$ is an outneighbour of $v_{i}$ and hence for any $i\leq j\leq l.$ Therefore in the given ordering $\prec$ the set $\{v_{i},v_{i+1},\cdots , v_{l}\}\cup N_{O}(A\setminus \{v_{f}\})$ forms an Out-Fountain due to Lemma \ref{lme4}.  

\noindent Similarly one can prove $B\cup N_{I}(B\setminus \{v_{f}\})$ satisfies (\ref{poicd}) if we arrange the vertices of $N_{I}(B\setminus \{v_{f}\})$ in decreasing order of their in-degrees restricted on the set. Moreover there exists a set $B^{'}\subsetneq B$ for which $B^{'}\cup N_{I}(B\setminus \{v_{f}\})$ form an In-Fountain. 
\end{proof}

\begin{lem}\label{lme7}
Let $v_{k}\notin A$ and $v_{k}v_{i}\in E$ for some $f\leq i<l.$ If $v_{k}v_{l}\notin E$ then $v_{k}v_{j}\in E$ for all $f\leq j\leq i$ and $v_{k}v_{j}\in E$ for all $f\leq j\leq l$ if $v_{k}v_{l}\in E$ but $v_{f}v_{k}\notin E$. \\
\vspace{0.2em}
\noindent (Let $v_{k}\notin B$ and $v_{i}v_{k}\in E$ for some $f\leq i<l$. If $v_{l}v_{k}\notin E$ then $v_{j}v_{k}\in E$ for all $f\leq j \leq i$ and $v_{j}v_{k}\in E$ for all $f\leq j \leq l$ if $v_{l}v_{k}\in E$ but $v_{k}v_{f}\notin E$)
\end{lem}

\begin{proof}
As $G$ is acyclic, $v_{l}v_{k}\notin E$. Again if $v_{k}v_{l}\notin E$ then 
we reverse the vertex ordering of $A$, then by definition $A$ becomes an In-Fountain. Therefore by Lemma \ref{lme3} the result follows.
Now if $v_{k}v_{l}\in E$, then as $v_{k}v_{i}\in E$ and $v_{f}v_{k}\notin E$ using Lemma \ref{lem2} for the vertices $\{v_{f},v_{l},v_{i},v_{k}\}$
and Corollary \ref{corl1} for $N_{I}[v_{l}]$, we can conclude that $v_{k}v_{f}\in E$. Again as $v_{k}v_{l}\in E$, from Lemma \ref{lme5} we get $v_{k}v_{j}\in E$ for all $f\leq j\leq l$ as $G$ is chordal.  The proof for the case of $B$ is similar.
\end{proof}

\begin{defn}
Let $A_{0}=A$ be an Out-Fountain. We take $A_{1}= A\cup N_{O}(A\setminus \{v_{f}\})$. For $i>1$ let $A_{i+1}= A_{i}\cup N_{O}(A_{i}\setminus \{v_{f}\})$.
Then clearly, $A_{i}$ forms an increasing chain of subsets of $V$. As $V$ is a finite set, this chain must be stable. So there is an integer $N$ such that $A_{N}=A_{N+1}$. We call the set $A_{N}$ as {\em derived set of out-neighbours} \index{derived set} of $A$. Notationally we define it by $Der(A)$. Analogously we can define {\em derived set of in-neighbours} ($Der(B)$) for an In-Fountain $B$.
\end{defn}

\vspace{1em}

\noindent \textbf{Properties of Derived set:}
\vspace{0.6em}

\begin{enumerate}
\item No vertex of $Der(A)$ except $v_{f}$ has an out-neighbour outside $Der(A)$ and
no vertex of $Der(B)$ except $v_{f}$ has an in-neighbour outside $Der(B)$.

\item There exists an ordering on $Der(A)$ ($Der(B)$) satisfying (\ref{poicd}) and it induces the natural order in $A$.\label{p2}

\begin{proof}
Follows from repeated application of Lemma \ref{lme6}.
\end{proof}

\noindent  As $Der(A)$ is a ordered set from above property, by {\em boundary points} of $Der(A)$ we mean the two extreme vertices of the set in the induced order. By {\em interior point} of $Der(A)$ we mean any vertex of the set except the boundary points.
For $Der(B)$ these definitions are analogous. 

\item If $v_{k}\notin Der(A)$ is an in-neighbour of some interior point of $Der(A)$. Then it must also be an in-neighbour of $v_{f}$. (If $v_{k}\notin Der(B)$ is an out-neighbour of some interior point of $Der(B)$. Then it must also be an out-neighbour of $v_{f}$.)

\begin{proof}
We assume on contrary that $v_{k}v_{i}\in E$ but $v_{k}v_{f}\notin E$ where 
 $v_{i}$ is the first outneighbour of $v_{k}$ in $Der(A)$. Now if $v_{i}\in A\setminus \{v_{l}\}$ then from property $(1)$ it is clear that $v_{l}v_{k}\notin E$. Hence applying Lemma \ref{lme7} on vertices of $A$ it follows that $v_{k}v_{f}\in E$, which contradicts our assumption.
 
Again if $v_{i}\notin A\setminus \{v_{l}\}$ then let $v_{i}\in A_{t+1}\setminus A_{t}$ for some $t$ implies $v_{i}\in N_{O}[A_{t}]$ (say $N_{O}[v_{j}]$), then $v_{j}\prec v_{i}$ by our ordering. Also $v_{k}$ is not an outneighbour of 
$v_{j}$ from property $(1)$. Hence if $v_{i}$ is an interior point of $N_{O}[v_{j}]$ then from Lemma \ref{lme7} we get $v_{k}v_{j}\in E$, which is not true from the choice of $i$. Next if $v_{i}$ be the last end point of $N_{O}[v_{j}]$. Then we consider the sets $N_{O}[v_{j}]$ and $N_{O}[v_{i}]$. Again one can show that $v_{j},v_{k}$ are not adjacent in $U(G)$. Hence $D_{4}$ or $D_{2}$ get induced by the vertices $\{v_{j},v_{i},v_{k},v_{i_{2}}\}$ where $v_{i_{2}}$ is the last vertex of $N_{O}[v_{i}]$, which is a contradiction. Thus the result holds.
Similar proof will follow for the case of $Der(B)$. 
\end{proof}

\end{enumerate}

\begin{lem}\label{lme8}
Let $v\in V$ such that either $v$ has no in(out)-neighbour and $N_{O}[v]$
($N_{I}[v]$) is a single out(in)-neighbour tournament. Then there exists an ordering (say $\prec$) on $V$ satisfying (\ref{poicd}) having $v$ as the first element in $\prec$.
\end{lem}

\begin{proof}
We proceed by induction on $|V|$. Clearly done for $|V|=1$. If $|V|>1$ then consider $A=N_{O}[v]$. Let $V^{'}=(V\setminus Der(A))\cup \{v_{l^{'}}\}$ where $v_{l^{'}}$ is the last element of $Der(A)$. Then $|V^{'}|<|V|$. 
Also $v_{l^{'}}$ satisfies no out neighbour from property $(1)$ and it can induce maximum one inneighbour tournament (say $R_{v_{l^{'}}}$) whose vertices of are disjoint from $Der(A)$ (follows from Corollary \ref{corl1}, property $1,3$ and the fact $v$ has no inneighbour). Hence by induction there exists an ordering of $V^{'}$ satisfying (\ref{poicd}) and let that be $v_{l^{'}}\prec v_{l^{'}+1}\prec \hdots \prec v_{n}$. Also $Der(A)$ is an ordered set satisfying (\ref{poicd}) follows from property $2$ and hence $v=v_{1}\prec v_{2}\prec \hdots\prec v_{l^{'}}$. We now order the elements of $V$ as $v_{1}\prec v_{2}\prec \hdots v_{l^{'}}\prec v_{l^{'}+1}\prec \hdots \prec v_{n}$. This clearly satisfies (\ref{poicd}).
For the case of inneighbours we consider $Der(B)$ where $B=N_{I}[v]$ and proceed similarly.
\end{proof}

\noindent \textbf{Construction of $\hat{P}$}

\noindent There are three cases which may occur while constructing $\hat{P}$. Below we list all of them. As $v_{1}$ is the first element of $P$ one can easily check that $N_{I}[v_{1}]$ must be a single tournament. Further if $N_{O}[v_{1}]=A\cup B$ where $A,B$ are disjoint outneighbour tournaments of $v_{1}$, then $v_{1}$ can not have any inneighbour in $G$. On contrary if $v_{k}v_{1}\in E$ then from Lemma \ref{lem2} it follows that $v_{k}$ can not be simultaneously adjacent to $A$ and $B$ in $U(G)$. In other cases $D_{4}$ or $D_{5}$ get induced in $G$, which again introduce contradiction.

\begin{enumerate}

\item \textit{$N_{I}[v_{1}]$ is a single tournament and $v_{1}$ has no outneighbour.}

\begin{proof}
Construction of $\hat{P}$ follows from Lemma \ref{lme8}.
\end{proof}

\item \textit{$N_{O}[v_{1}]$ is disjoint union of two tournaments and $v_{1}$ has no inneighbour.}

\begin{proof}
\noindent Let $N_{O}[v_{1}]=A\cup B$. If we arrange the vertices of each sets individually according to decreasing outdegree restricted on the sets then both of them become Out-Fountains. Next we consider consider $Der(A)$. If $|Der(A)|>|A|$ then $Der(B)=B$ follows from the fact that all outneighbours of $v_{1}$ occur consecutively in $P$ right to $v_{1}$. As $A\cap B=\emptyset$ and they are tournaments, one can easily verify that each vertex of $B$ must occur prior every vertex of $A$ in $P$. Therefore if any vertex (say $v_{b}$) of $B$ possess inneighbour (say $v_{k}$) outside $V\setminus (A\cup B)$, then it must be an inneighbour of $A$ (say $v_{a}$). Therefore $\{v_{1},v_{a},v_{b},v_{k}\}$ form a $4$-cycle in $U(G)$. But as $v_{a},v_{b}$ are nonadjacent and $G$ is chordal, $v_{k}v_{1}\in E$. But this contradicts our assumption. Therefore using property $1$ we can conclude now that no vertex of $B\setminus \{v_{1}\}$ have any neighbour outside the set $B$.

We place $Der(A)$ right to $v_{1}$ in $\hat{P}$ and place the vertices of $B$ in reverse order left to $v_{1}$ so that it become an In-Fountain. Now applying Lemma \ref{lme8} one can order the vertices of $(V\setminus Der(B))\cup \{v_{1}\}$ so that they satisfy (\ref{poicd}). Again as $Der(B)$ satisfies (\ref{poicd}) from property $2$ it is easy to check now that $\hat{P}$ satisfies (\ref{poicd}).
Now if $Der(A)=A$ and $Der(B)=B$ then the proof is trivial.    
\end{proof}

\item \textit{$N_{O}[v_{1}]$ is a single tournament.}

\noindent If $v_{1}$ does not contain any inneighbour then $\hat{P}$ is formed by Lemma \ref{lme8}.

\noindent Again if $v_{1}$ has an inneighbour (say $v_{k}$) then $v_{l}$ can not have any outneighbour where $v_{l}$ is the last element of the Out-Fountain $N_{O}[v_{1}]$. More specifically $Der(A)=N_{O}[v_{1}]=A$.

Note that $v_{k}$ can not be an outneighbour of $v_{l}$ as $G$ is acyclic. On contrary let $v_{i}$ be an outneighbour of $v_{l}$. Now in $P$ as vertices of $N_{O}[v_{1}]$ occur consecutively right to $v_{1}$, $v_{i},v_{k}>v_{l}$ in $P$. Hence $v_{k}v_{l}\in E$. Now if $v_{k}<v_{i}$ in $P$ then $v_{l}v_{k}\in E$ which contradicts that $G$ is oriented. Again if $v_{k}>v_{i}$ then $\{v_{1},v_{l},v_{k},v_{i}\}$ induce $D_{9}$ which is a contradiction.

\noindent Now we will show $N_{I}[v_{l}]$ is a single tournament when $v_{k}v_{1}\in E$.

\noindent Again if on contrary we assume $v_{l}$ possesses an inneighbour $v_{i}$ which is disjoint from $v_{1}$, then $v_{i}\notin N_{O}[v_{1}]$ clearly. Now if $v_{i}<v_{k}$ in $P$, then the vertices $\{v_{1},v_{l},v_{k},v_{i}\}$ induce $D_{7}$, again if $v_{k}<v_{i}$, then they form $D_{3}$, which are both contradiction. Hence the result follows.

We can now apply Lemma \ref{lme8} on the In-Fountain $B=N_{I}[v_{l}]$(say). Thus we get an ordering $\hat{P}$ on $V$ satisfying (\ref{poicd}) having $v_{l}$ as the first element.
\end{enumerate}
\end{proof}

\begin{figure}\centering 
\begin{tikzpicture}
\draw (0,0)--(2,0);
\draw (0,0)--(0,2);
\draw (0,2)--(2,2);
\draw (2,0)-- (2,2);
\draw (0,0)--(2,2);

\draw  [fill=black](0,0) circle [radius=0.05];
\draw  [fill=black](2,0) circle [radius=0.05];
\draw  [fill=black](0,2) circle [radius=0.05];
\draw  [fill=black](2,2) circle [radius=0.05];

\node [left] at (0,0) {\tiny{$v_{1}$}};
\node [left] at (0,2) {\tiny{$v_{2}$}};
\node [right] at (2,2) {\tiny{$v_{3}$}};
\node [right] at (2,0) {\tiny{$v_{4}$}};

\draw[->] (0,2)--(0,1);
\draw [->] (2,0)--(1,0);
\draw[->] (2,2)--(1,2);
\draw[->] (2,0)--(2,1);
\draw[->] (2,2)--(1,1);

\draw (3,2)--(5,2);
\draw (5,2)--(7,2);
\draw (5,2)--(5,0);
\draw (7,2)--(5,0);

\draw  [fill=black](3,2) circle [radius=0.05];
\draw  [fill=black](5,2) circle [radius=0.05];
\draw  [fill=black](7,2) circle [radius=0.05];
\draw  [fill=black](5,0) circle [radius=0.05];

\node [above] at (3,2) {\tiny{$v_{1}$}};
\node [above] at (5,2) {\tiny{$v_{2}$}};
\node [above] at (7,2) {\tiny{$v_{3}$}};
\node [left] at (5,0) {\tiny{$v_{4}$}};

\draw[->] (5,2) -- (6,2);
\draw[->] (5,0) -- (5,1);
\draw[->] (5,0) -- (6,1);

\draw (9,2)--(10,1);
\draw (10,1)--(11,2);
\draw (10,1)--(10,0);

\draw  [fill=black](9,2) circle [radius=0.05];
\draw  [fill=black](11,2) circle [radius=0.05];
\draw  [fill=black](10,1) circle [radius=0.05];
\draw  [fill=black](10,0) circle [radius=0.05];

\draw[->] (10,1)--(9.5,1.5);
\draw [->] (10,1)--(10.5,1.5);

\node [left] at (9,2) {\tiny{$v_{1}$}};
\node [left] at (10,1) {\tiny{$v_{2}$}};
\node [right] at (11,2) {\tiny{$v_{3}$}};
\node [left] at (10,0) {\tiny{$v_{4}$}};

\draw (0,-1)--(2,-1);
\draw (0,-3)--(2,-3);
\draw (0,-1)--(0,-3);
\draw (2,-1)-- (2,-3);
\draw (2,-1)--(0,-3);

\draw  [fill=black](0,-1) circle [radius=0.05];
\draw  [fill=black](0,-3) circle [radius=0.05];
\draw  [fill=black](2,-1) circle [radius=0.05];
\draw  [fill=black](2,-3) circle [radius=0.05];

\node [left] at (0,-1) {\tiny{$v_{2}$}};
\node [right] at (2,-1) {\tiny{$v_{3}$}};
\node [left] at (0,-3) {\tiny{$v_{1}$}};
\node [right] at (2,-3) {\tiny{$v_{4}$}};

\draw[->] (0,-3)--(0,-2);
\draw[->] (0,-3)--(1,-3);
\draw[->] (0,-3)--(1,-2);
\draw[->]  (2,-1)--(1,-1);
\draw[->] (2,-1)--(2,-2);

\draw (3.5,-1)--(5.5,-1);
\draw (3.5,-3)--(5.5,-3);
\draw (3.5,-1)--(3.5,-3);
\draw (5.5,-1)-- (5.5,-3);
\draw (5.5,-1)--(3.5,-3);

\draw  [fill=black](3.5,-1) circle [radius=0.05];
\draw  [fill=black](3.5,-3) circle [radius=0.05];
\draw  [fill=black](5.5,-1) circle [radius=0.05];
\draw  [fill=black](5.5,-3) circle [radius=0.05];

\node [left] at (3.5,-1) {\tiny{$v_{2}$}};
\node [right] at (5.5,-1) {\tiny{$v_{3}$}};
\node [left] at (3.5,-3) {\tiny{$v_{1}$}};
\node [right] at (5.5,-3) {\tiny{$v_{4}$}};

\draw[->] (5.5,-1)--(4.5,-1);
\draw[->] (5.5,-1)--(5.5,-2);
\draw[->] (3.5,-1)--(3.5,-2);
\draw[->]  (5.5,-3)--(4.5,-3);
\draw[->] (5.5,-1)--(4.5,-2);

\draw (8,-1)--(10,-1);
\draw (8,-3)--(10,-3);
\draw (8,-1)--(8,-3);
\draw (10,-1)--(10,-3);
\draw (8,-3)--(10,-1);

\draw  [fill=black](8,-1) circle [radius=0.05];
\draw  [fill=black](10,-1) circle [radius=0.05];
\draw  [fill=black](8,-3) circle [radius=0.05];
\draw  [fill=black](10,-3) circle [radius=0.05];

\node [left] at (8,-1) {\tiny{$v_{2}$}};
\node [right] at (10,-1) {\tiny{$v_{3}$}};
\node [left] at (8,-3) {\tiny{$v_{1}$}};
\node [right] at (10,-3) {\tiny{$v_{4}$}};

\draw[->] (8,-1)--(9,-1);
\draw[->] (8,-1)--(8,-2);
\draw[->] (8,-3)--(9,-2);
\draw[->] (10,-3)--(10,-2);
\draw[->] (10,-3)--(9,-3);

\draw (12,-1)--(14,-1);
\draw (12,-3)--(14,-3);
\draw (12,-1)--(12,-3);
\draw (14,-1)--(14,-3);
\draw (12,-3)--(14,-1);

\draw  [fill=black](12,-1) circle [radius=0.05];
\draw  [fill=black](14,-1) circle [radius=0.05];
\draw  [fill=black](12,-3) circle [radius=0.05];
\draw  [fill=black](14,-3) circle [radius=0.05];

\node [left] at (12,-1) {\tiny{$v_{2}$}};
\node [right] at (14,-1) {\tiny{$v_{3}$}};
\node [left] at (12,-3) {\tiny{$v_{1}$}};
\node [right] at (14,-3) {\tiny{$v_{4}$}};

\draw[->] (14,-1)--(13,-1);
\draw[->] (12,-1)--(12,-2);
\draw[->] (14,-1)--(14,-2);
\draw[->] (12,-3)--(13,-3);
\draw[->] (14,-1)--(13,-2);

\node [] at (1,-0.5) {\tiny{$D_{3}$ }};
\node [] at (5,-0.5) {\tiny{$D_{4}$}};
\node [] at (10,-0.5) {\tiny{$D_{5}$}};
\node[] at (1,-3.5){\tiny{$D_{6}$}};
\node[] at (4.5,-3.5){\tiny{$D_{7}$}};
\node[] at (9,-3.5){\tiny{$D_{8}$}};
\node[] at (13.5,-3.5){\tiny{$D_{9}$}};

\end{tikzpicture}
\caption{oriented ICD but not proper oriented ICD (undirected edge can have any direction)}\label{forbid}
\end{figure}

\begin{figure}\centering
\begin{tikzpicture}

\draw (0,0)--(2,0);
\draw(0,0)--(0,2);
\draw (2,0)--(2,2);
\draw (0,2)--(2,2);
\draw (0,0)--(2,2);

\draw  [fill=black](0,0) circle [radius=0.05];
\draw  [fill=black](2,0) circle [radius=0.05];
\draw  [fill=black](0,2) circle [radius=0.05];
\draw  [fill=black](2,2) circle [radius=0.05];

\draw[->] (0,2)--(1,2);
\draw[->] (0,2)--(0,1);
\draw[->] (0,0)--(1,0);
\draw[->] (2,2)--(2,1);

\draw[->] (2,2)--(1,1);

\end{tikzpicture}
\caption{$\hat{D}$}\label{allowed} 
\end{figure}

\section{Conclusion}
It was proved in \cite{Soto} that interval graphs $\subset$ central MPTG $\subseteq$ max-tolerance graph. Combining with these we establish the relations between some subclasses of max-tolerance graphs related to central MPTG in Figure \ref{subclass}. 

\noindent Note that $C_{4}\in \text{unit max-tolerance}\smallsetminus \text{interval graphs}$. $C_{4}$ has a unit-max-tolerance representation having intervals $[1,5],[2,6],[3,7],[4,8]$ and corresponding tolerances $1,3,3,1$ for its consecutive vertices (clockwise or anticlockwise). But $C_{4}$ is not an interval graph. In Theorem \ref{c61} we have shown that $\overline{C_{6}}$ is a max-tolerance graph which is not a central MPTG. Also we note that $K_{2,3}$ is not a central MPTG (by Lemma $7$ of \cite{Soto}) but it is a MPTG (by Lemma $8$ of \cite{Soto}). Again  $K_{1,3}\in 50\%\ \text{max-tolerance graph}\smallsetminus \text{proper interval graph}$. $K_{1,3}$ is an example of a graph which has a $50\%$ max-tolerance representation having interval $[1.9,6.1]$ and tolerance $2.1$ for its central vertex and the intervals $[0,8],[1.8,4.3],[3.6,5.9]$ and corresponding tolerances $4,1.25,1.15$ for pendant vertices. But it is not a proper interval graph.

\vspace{.2em}\noindent The following examples lead us to conclude that interval graphs and $50\%$ max-tolerance graphs are incomparable.

\vspace{.2em}\noindent $C_{4}\in 50\%\ \text{max-tolerance graph}\smallsetminus \text{interval graph}$. $C_{4}$ is an example of a graph which has $50\%$ max-tolerance representation having intervals $[1,4.6],[2,4],[2.9,4.9],[2.7,6.3]$ and corresponding tolerances $1.8,1,1,1.8$ for its consecutive vertices (clockwise or anticlockwise). But it is not an interval graph. Again $K_{1,n}$, $n\geq 8 \in \text{interval Graphs}\smallsetminus 50\%\ \text{max-tolerance graph}$. $K_{1,n}$ is an interval graph having interval $[1,2n]$ for central vertex and intervals $[2i-1,2i]$ for pendant vertices where $i\in\{1,\hdots,n\}$. But it has no $50\%$ max-tolerance representation follows from Lemma \ref{px4}.

\noindent Again from Theorem \ref{com} we get to know that central MPTG and $50\%$ max-tolerance graphs are not comparable. One can also conclude from above that $K_{1,n},n\geq 8\in \text{max-tolerance graph}
$. But it is not a $50\%$ max-tolerance graph from Theorem \ref{px4}. 

\noindent In this paper we consider three subclasses of the class of ICD, namely central ICD, oriented ICD and proper ICD. We obtain characterizations for central ICD, oriented ICD when it is a tournament and proper ICD. Also in terms of forbidden subdigraphs we characterize those oriented ICD for which their underlying graph is tree and those proper oriented ICD for which their underlying graph is chordal. In Figure \ref{relation1} we show relationships between the digraphs discussed in this paper whose proofs follow from the mentioned characterization theorems. Several combinatorial optimization problems remain open for these digraphs, for example, to find the complete list of forbidden digraphs or to construct recognition algorithms for them. 

\vspace{.2em}\noindent
Finally we note the major unsolved problems in this area.
\begin{enumerate}
\item Recognition algorithm and forbidden subgraph characterization of max-point tolerance graphs.
\item Combinatorial characterization, adjacency matrix characterization, recognition algorithm and forbidden subgraph characterization of central max-point tolerance graphs and $50\%$ max-tolerance graphs.
\item Find the complete list of forbidden digraphs of central ICD, proper ICD.
\end{enumerate}

\begin{figure}[t]

\begin{center}
\begin{tikzpicture}
\node at (2.5,1) {\Fontvix{max-tolerance graph}};
\node at (-0.2,1) {\Fontvix{MPTG}};

\node at (0.6,-1) {\Fontvix{central MPTG=unit max-tolerance graph}};
\node [below] at (1,-2.5) {\Fontvix {interval graph}};
\node[below] at  (1,-4.5) {\Fontvix {proper interval graph=proper central MPTG=unit central MPTG}};
%\node[below] at  (6.3,-1.3) {\Fontvix {$50\%$ max-tolerance graph}};
\node[below] at  (.54,-3.5) {\tiny {$K_{1,3}$}};
\node[below] at  (.64,-1.7) {\tiny {$C_{4}$}};
%\node[below] at  (3.4,-2.75) {\tiny {$K_{1,3}$}};
\node[below] at  (1.64,0.2) {\tiny {$\overline{C_{6}}$}};
\node[below] at  (0,0.2) {\tiny {$K_{2,3}$}};
\draw (1.7,.8)--(1,-.8);
\draw (0,0.8)--(1,-0.8);
\draw (1,-1.2)--(1,-2.5);
\draw (1,-2.9)--(1,-4.5);

\draw (1,-4.5)--(3.7,-1.1);
\draw (3.7,-0.9)--(2.5,0.9);
\node at (5.1,-1) {\Fontvix{$50\%$ max-tolerance graph}};
\node[below] at (3,-2.5) {\tiny{ $K_{1,3}$}};
\node[below] at (3.64 ,0.2 ) {\tiny{$K_{1,8}$}};

\end{tikzpicture}
\caption{Hierarchy of subclasses of the class of max-tolerance graph}\label{subclass}
\end{center}

\end{figure}

\begin{center}
\begin{figure}[b]

	\begin{tikzpicture}[scale=1]
	
	  \draw (1,0.5)--(7,0.5);
    \draw (7,0.5)--(7,5.3);
    \draw (1,5.3)--(7,5.3);
    \draw (1,5.3)--(1,0.5);
    
    \node at (3.7,5){{\footnotesize ICD}};
    \node at (2.4,4.8){$D_{11}$};
    
    \draw (1.5,1)--(5,1);
    \draw (5,1)--(5,4.3);
    \draw (1.5,4.3)--(5,4.3);
    \draw (1.5,1)--(1.5,4.3);
    
    \node at (2,3.5){$D_{10}$};
    \node at (2.6,4.1){{\footnotesize Central}};
    \node at (2.7,3.7){{\footnotesize ICD}}; 
    
    \draw (1.8,1.2)--(4.7,1.2);
    \draw (4.7,1.2)--(4.7,3.2);
    \draw (4.7,3.2)--(1.8,3.2);
    \draw (1.8,3.2)--(1.8,1.2);   
    		
 	\node [left] at (2.7,1.7) {$D_{12}$}; 	
 	
 	\node[below] at (2.6,3){{\footnotesize Oriented}};
 	\node[below] at (2.5,2.6){{\footnotesize ICD}};
 	
 	\node[below] at (4,3){{\footnotesize Proper}};
 	\node[below] at (4,2.7){{\footnotesize oriented}};
 	\node[below] at (4,2.3){{\footnotesize ICD}};
		
	\draw (3.3,1.5)--(6.7,1.5);
	\draw (6.7,1.5)--(6.7,4.6);	
	\draw (6.7,4.6)--(3.3,4.6);
	\draw (3.3,1.5)--(3.3,4.6);
		
	\node[below] at (6,3.2){$G_1$};
\node at (6,4.2){{\footnotesize Proper}};
\node at (6,3.8) {{\footnotesize ICD}};
\draw (8,5)--(10,5);
\draw (8,5)--(9,3);
\draw (10,5)--(9,3);
\draw (9,3)--(9,2);
\draw  [fill=black](8,5) circle [radius=0.05];
\draw  [fill=black](10,5) circle [radius=0.05];
\draw  [fill=black](9,3) circle [radius=0.05];
\draw  [fill=black](9,2) circle [radius=0.05];
\node [left] at(8,5) {$v_{1}$};
\node [right] at(10,5) {$v_{2}$};
\node [below] at(9.3,3) {$v_{3}$};
\node [below] at(9,2) {$v_{4}$};
\draw[->](8,5)--(9,5);
\draw[->](9,3)--(8.5,4);
\draw[->](10,5)--(9.5,4);
\draw[->](9,3)--(9.5,4);
\draw[->](9,3)--(9,2.5);
\draw[->](9,2)--(9,2.5);
\node at(9,1) {$D_{10}$};
\draw (12,5)--(14,5);
\draw (14,5)--(16,5);
\draw (14,5)--(14,3.5);
\draw (12,5)--(14,3.5);
\draw (16,5)--(14,3.5);
\draw (14,3.5)--(14,2);
\draw (12,5)--(14,2);
\draw  [fill=black](12,5) circle [radius=0.05];
\draw  [fill=black](14,5) circle [radius=0.05];
\draw  [fill=black](16,5) circle [radius=0.05];
\draw  [fill=black](14,3.5) circle [radius=0.05];
\draw  [fill=black](14,2) circle [radius=0.05];
\node [left] at(12,5) {$v_{4}$};
\node [right] at(16,5) {$v_{2}$};
\node [above] at(14,5) {$v_{1}$};
\node [right] at(14,3.2) {$v_{3}$};
\node [below] at(14,2) {$v_{5}$};
\draw[->](12,5)--(13,5);
\draw[->](14,5)--(13,5);
\draw[->](14,5)--(15,5);
\draw[->](14,5)--(14,4);
\draw[->](16,5)--(15,4.25);
\draw[->](12,5)--(13,4.25);
\draw[->](14,3.5)--(13,4.25);
\draw[->](14,3.5)--(14,3);
\draw[->](14,2)--(13,3.5);
\draw (12,5) to[out=90,in=90] (16,5);
\draw[->](13.9,6.17)--(14,6.17);
\node at(14,1) {$D_{11}$};
\draw (18,5)--(18,3.5);
\draw (16,3.5)--(18,3.5);
\draw (18,3.5)--(18,2);
\draw  [fill=black](18,5) circle [radius=0.05];
\draw  [fill=black](18,3.5) circle [radius=0.05];
\draw  [fill=black](18,2) circle [radius=0.05];
\draw  [fill=black](16,3.5) circle [radius=0.05];
\node [above] at(18,5) {$v_{1}$};
\node [right] at(18,3.5) {$v_{2}$};
\node [below] at(16,3.5) {$v_{4}$};
\node [left] at(18,2) {$v_{3}$};
\draw[->](18,3.5)--(18,4.25);
\draw[->](18,3.5)--(18,2.75);
\draw[->](18,3.5)--(17,3.5);
\node at (18,1) { $D_{12}$};
\end{tikzpicture}
\caption{Relations between some subclasses of ICD}
\label{relation1}
\end{figure}
\end{center}

\noindent\textbf{Acknowledgements:}

\noindent This research is supported by UGC (University Grants Commission) NET fellowship (21/12/2014(ii)EU-V) of the first author. 

{\small }

\end{document}